\theoremstyle{plain}
\newtheorem{theorem}{Theorem}[section]
\newtheorem{corollary}[theorem]{Corollary}
\newtheorem{lemma}[theorem]{Lemma}
\newtheorem{proposition}[theorem]{Proposition}
\newtheorem{propo}[theorem]{Proposition}
\theoremstyle{definition}
\newtheorem{definition}[theorem]{Definition}
\newtheorem{examplewr}[theorem]{Example}
\theoremstyle{remark}
\newtheorem{obswr}[theorem]{Observation}
\newtheorem{remarkwr}[theorem]{Remark}
\newenvironment{remark}{\begin{remarkwr}\begin{upshape}}{\end{upshape}\end{remarkwr}}
\DeclareMathOperator{\disc}{disc}
\DeclareMathOperator{\diag}{diag}
\DeclareMathOperator{\Gal}{Gal}
\DeclareMathOperator{\Hom}{Hom}
\DeclareMathOperator{\id}{id}
\DeclareMathOperator{\Ind}{Ind}
\DeclareMathOperator{\norm}{N}
\DeclareMathOperator{\rank}{rank}
\DeclareMathOperator{\Res}{Res}
\DeclareMathOperator{\Tr}{Tr}
\newcommand{\bb}{\mathbb}
\newcommand{\cl}{\mathcal}
\newcommand{\GL}{\mathrm{GL}}
\newcommand{\Mp}{{\mathrm{Mp}}}
\newcommand{\PGL}{{\mathrm{PGL}}}
\newcommand{\PU}{\mathrm{PU}}
\newcommand{\SL}{{\mathrm{SL}}}
\newcommand{\SO}{{\mathrm{SO}}}
\newcommand{\Sp}{{\mathrm{Sp}}}
\newcommand{\U}{\mathrm{U}}
\newcommand{\Norm}{\mathrm{N}}
\newcommand{\mat}[4]{\left(\begin{array}{cc}#1&#2\\#3&#4\end{array}\right)}
\title{Dihedral long root A-packets of $p$-adic $G_2$ via theta correspondence }
\author{Raúl Alonso, Qiao He, Mishty Ray, Martí Roset}
\address{R.A.: University of California, Santa Barbara, USA}
\email{raular@ucsb.edu}
\address{Q.H.: Columbia University, New York, USA}
\email{qh2275@columbia.edu}
\address{M.R.: University of Calgary, Calgary, Canada}
\email{mishty.ray@ucalgary.ca}
\address{M.R.: McGill University, Montreal, Canada}
\email{marti.rosetjulia@mail.mcgill.ca}
\subjclass[2020]{11F27; 11F70, 22E50} 
\begin{document}

\maketitle

\begin{abstract}
    We construct local Arthur packets associated with a dihedral long root $A$-parameter of a split reductive group of type $G_2$ over a nonarchimedian local field of characteristic zero. The construction relies on an exceptional correspondence for the pair $(\PU_3\rtimes \bb{Z}/2\bb{Z},G_2)$.
\end{abstract}
\tableofcontents{}

\section{Introduction}

Let $G$ be a connected reductive linear algebraic group over a number field $F$. In \cite{arthur1989unipotent, Arthur:unipotent-motivation}, J. Arthur has given a conjectural description of the constituents of the square integrable automorphic representations $\mathcal A_2(G)$ of $G$. The conjecture predicts that there is a decomposition 
\[
\mathcal A_2(G) = \bigoplus_\psi \mathcal A_{2, \psi}
\]
where each $\mathcal A_{2,\psi}$ is (to a first approximation) a near equivalence class of representations and the sum runs over equivalence classes of discrete A-parameters $\psi$. A-parameters are admissible maps
\[
\psi: L_F \times \SL_2(\bb{C}) \to \prescript{L}{}{G},
\]
where $L_F$ denotes the conjectural Langlands group of $F$ and $\prescript{L}{}{G}=\hat{G}(\bb{C})\rtimes W_F$ denotes the L-group of $G$. Here $\hat{G}(\bb{C})$ is the complex dual group of $G$ and $W_F$ is the Weil group of $F$. We say that $\psi$ is discrete if the component group $S_\psi$, defined as the centralizer of the image of $\psi$ in $\hat{G}(\bb{C})$ modulo the center of $\hat{G}(\bb{C})$, is finite. Fix such an A-parameter $\psi$. Arthur's conjecture gives a more precise description of the constituents of $\mathcal A_{2, \psi}$. It first describes the local components of the representations appearing in $\mathcal A_{2,\psi}$, via the so-called \emph{local A-packets}, and then determines which combinations of such local representations appear globally, using the \emph{global A-packets} and the \emph{multiplicity formula}.

Let $v$ be a place of $F$, let $F_v$ be the completion of $F$ at $v$ and denote by $L_{F_v}$ the group $W_{F_v} \times \mathrm{SU}_2(\bb{C})$, where $W_{F_v}$ is the Weil group of $F_v$. We can pre-compose $\psi$ with a fixed embedding $L_{F_v} \xhookrightarrow{} L_F$ to obtain the local A-parameter $\psi_v: L_{F_v} \times \SL_2(\bb{C}) \to \prescript{L}{}{G}$. Define the local component group of $\psi_v$ as
\[
S_{\psi_v} = \pi_0\left(Z_{\hat{G}(\bb{C})}(\mathrm{Im}(\psi_{v})) Z(\hat{G}(\bb{C})) / Z(\hat{G}(\bb{C}))  \right),
\]
where $Z_{\hat{G}(\bb{C})}(\mathrm{Im}(\psi_{v}))$ denotes the centralizer in $\hat{G}(\bb{C})\subset \prescript{L}{}{G}$ of the image of $\psi_v$, $Z(\hat{G}(\bb{C}))$ denotes the center of $\hat{G}(\bb{C})$ and $\pi_0$ denotes the group of connected components. Arthur predicted that to each irreducible representation $\eta_v$ of $S_{\psi_v}$, we can attach a unitarizable finite length (possibly reducible, possibly zero) representation $\pi_{\eta_v}$ of $G(F_v)$. The collection
\[
A_{\psi_v} = \left\{\pi_{\eta_v} \mid \eta_v \in \mathrm{Irr}(S_{\psi_v})  \right\}
\]
is the local A-packet associated to $\psi_v$. There are several requirements on the representations in $A_{\psi_v}$. One of them is that if we let $1_v$ denote the trivial representation of $S_{\psi_v}$, then for all but finitely many $v$, $\pi_{1_v}$ is the unramified representation with Satake parameter 
\[
s_{\psi_v} = \psi_v\left(\Phi_v \times \mat{q_v^{-1/2}}{0}{0}{q_v^{1/2}} \right).
\]
Here $\Phi_v$ denotes a geometric Frobenius element at $v$ and $q_v$ is the size of the residue field at $v$. Given the local A-packets, we define the global A-packet associated to $\psi$ as 
\[
A_\psi = \left\{ \pi = \otimes_v' \pi_{\eta_v} \mid  \pi_{\eta_v} \in A_{\psi_v} \text{ for all } v \text{ and } \eta_v = 1_v \text{ for all but finitely many } v \right\}.
\]
Note that this is a set of nearly equivalent representations of $G(\bb{A}_F)$, which  is indexed by irreducible representations of $S_{\psi, \bb{A}_F} = \prod_v S_{\psi_v}$. For a given $\eta = \otimes_v \eta_v$, where $\eta_v \in \mathrm{Irr}(S_{\psi_v})$ and $\eta_v = 1_v$ for all but finitely many $v$, set $\pi_\eta = \otimes_v' \pi_{\eta_v}$. 
Arthur constructed a quadratic character $\epsilon_\psi$ of $S_\psi$ and used it to determine the multiplicity of each representation $\pi_\eta \in A_\psi$ appearing in $\mathcal A_{2, \psi}$. This yields the multiplicity formula 
\[
\mathcal A_{2, \psi} = \bigoplus_\eta m_\eta \pi_\eta, \ \text{ where } \ m_\eta = \frac{1}{\# S_\psi} \left( \sum_{s \in S_\psi} \epsilon_\psi(s) \eta(s)  \right).
\]
It is worth mentioning that the conjecture also predicts when all the representations in $\mathcal A_{2,\psi}$ are tempered, which should occur when the A-parameter $\psi$ is tempered (if $\psi$ restricted to $\SL_2(\bb{C})$ is trivial). On the other hand, nontempered A-parameters lead to local A-packets that can contain both tempered and nontempered representations. An important feature about nontempered A-parameters is that they usually factor through subgroups $^L\!H \subset \prescript{L}{}{G}$. It is then expected that we can construct the representations in the (local and global) A-packets associated to $\psi$ from representations of $H$. 

From now on suppose that $G$ is a split exceptional group of type $G_2$ over $F$. Note that $\prescript{L}{}{G}$ can be replaced by $G(\bb{C})$ and that there are $4$ different conjugacy classes of morphisms $\SL_2(\bb{C}) \to G(\bb{C})$ corresponding to the $4$ nontrivial unipotent conjugacy classes in $G(\bb{C})$. They give rise to $4$ families of nontempered A-parameters $\psi$ for $G_2$: if $\psi_{\vert \SL_2(\bb{C})}$ corresponds to the regular orbit, if $\psi_{\vert \SL_2(\bb{C})}$ corresponds to the subregular orbit, if $\psi_{\vert \SL_2(\bb{C})}$ gives the short root $\SL_2$ in $G_2$ and if $\psi_{\vert \SL_2(\bb{C})}$ gives the long root $\SL_2$ in $G_2$. For the first three mentioned families of nontempered A-parameters, Arthur's conjecture has been verified; see \cite{GGJ} and \cite{GG}. This work is part of a larger project initiated at the 2022 Arizona Winter School which aims to verify Arthur's conjecture for the so-called \emph{dihedral long root A-parameters}, a type of nontempered A-parameters for $G_2$ which belong to the fourth mentioned family. In particular, in this paper we construct the local nonarchimedean A-packets associated to dihedral long root A-parameters. We proceed to briefly introduce dihedral long root A-parameters and then we summarize how the corresponding local A-packets are constructed. For that we use a theta lift from $\PU_3 \rtimes \bb{Z}/2\bb{Z}$ to $G_2$ arising from the exceptional theta correspondence for $\left( \PU_3 \rtimes \bb{Z}/2\bb{Z} \right) \times G_2$ studied in \cite{endoscopiclifting}, \cite{GS21} and especially in \cite{BS}.

Let $K$ be a quadratic extension of $F$ and $c$ be the nontrivial element in the Galois group $\Gal(K/F)$. Denote by $\chi$ a character of $\bb{A}_K^\times/K^\times$ which is conjugate symplectic, \textit{i.e.}, $\chi_{\vert \bb{A}_F^\times} = \omega_{K/F}$, where $\omega_{K/F}$ is the quadratic character attached to the extension $K/F$. Moreover, suppose that $\chi^c \neq \chi$. Note that we can regard $\chi$ as a character of $W_K$. Let $\tau = \otimes_v' \tau_v$ be the representation of $\GL_2(\bb{A}_F)$ obtained from $\chi$ by automorphic induction, namely $\tau$ is the automorphic representation with $L$-parameter
\[
\rho_\tau = \Ind_{W_K}^{W_F} \chi.
\]
Note that the central character of $\tau$ is $\omega_{K/F}\cdot\chi_{\vert \bb{A}_F^\times} = 1$. Therefore, we regard $\rho_\tau$ as a representation of $\PGL_2$. Denote by $\SL_{2,l}$ (resp. $\SL_{2,s}$) the long (resp. short) root $\SL_2$ inside $G_2$. We define the long root A-parameter of $G$ associated with $\tau$ as
\[
\psi_{\tau, l}: L_F \times \SL_2(\bb{C}) \twoheadrightarrow{} W_F \times \SL_2(\bb{C}) \xrightarrow{\rho_\tau \times \id} \SL_{2,s}(\bb{C}) \times_{\mu_2} \SL_{2,l}(\bb{C}) \subset G(\bb{C}).
\]
There is a subgroup $\SL_{3,l}(\bb{C})\subset G(\bb{C})$ corresponding to the root system of type $A_2$ formed by the six long roots of $G$. Its normalizer inside $G(\bb{C})$ is isomorphic to $\SL_{3,l}(\bb{C}) \rtimes \bb{Z}/2\bb{Z}$. This is at the same time isomorphic to the L-group of $\PU_3$, the projective unitary group in three variables associated to the extension $K/F$. 
\[N_{G(\bb{C})}(\SL_{3,l}(\bb{C})) \simeq \SL_{3,l}(\bb{C}) \rtimes \bb{Z}/2\bb{Z} = ^L\!\PU_3.\]
In Section~\ref{sec: A-packets} we verify that $\psi_{\tau, l}$ can be conjugated to take values in that normalizer. We can therefore define the restriction 
\[
\psi_{\chi}: L_F \times \SL_2(\bb{C}) \to  ^L\!\PU_3.
\]
As discussed above, the fact that the A-parameter $\psi_{\tau, l}$ factors through an A-parameter of $\PU_3$ suggests that we can obtain the local A-packets for $\psi_{\tau, l}$ from the local A-packets for $\psi_{\chi}$. We explain this phenomenon when $v$ is a nonarchimedean place of $F$ which is nonsplit in $K$, as this is the most interesting case.

Fix a place of $K$ above $v$, denote by $K_v$ the completion of $K$ at that place and by $\chi_v$ the corresponding local component of $\chi$. The local component group of $\psi_{\chi,v}$ has two elements. Therefore, the local A-packet of $\PU_3$ associated to $\psi_{\chi_v}$ has the form 
\[
A_{\psi_{\chi,v}} = \left\{ \sigma_v^+, \sigma_v^- \right\},
\]
where $\sigma_v^+$ (resp. $\sigma_v^-$) corresponds to the trivial (resp. nontrivial) representation of the local component group. These representations can be obtained as theta lifts using the classical local theta correspondence between the unitary groups $\U_1$ and $\U_3$, as we explain in Section~\ref{sec: A-packets}. In particular, $\sigma_v^+$ is nontempered and $\sigma_v^-$ is supercuspidal. 

On the other hand, the local component group of $\psi_{\tau, l, v}$ has either two elements if $\chi_v^2 \neq 1$ or one element otherwise. Therefore, we expect that the local A-packet associated to $\psi_{\tau, l}$ has the form 
\[
A_{\psi_{\tau, l, v}} = \begin{cases}
\{\pi_v^+, \pi_v^-\}  \text{ if } \chi_v^2 \neq 1,\\
\{\pi_v^+\} \text{ if } \chi_v^2 = 1.
\end{cases}
\]
Here, $\pi_v^+$ corresponds to the trivial representation of $S_{\psi_{\tau, l , v}}$ and, when the local component group has two elements, $\pi_v^-$ is the representation corresponding to the nontrivial representation of $S_{\psi_{\tau, l , v}}$. Denote by $\Theta_{\PU_3}$ the exceptional big theta lift from $\PU_3$ to $G$ considered in \cite{BS} and by $\theta_{\PU_3}$ the corresponding small theta lift. Let $Q_1$ be the non-Heisenberg parabolic subgroup of $G(F_v)$, which has a Levi subgroup isomorphic to $\GL_2(F_v)$, and denote by $i_{Q_1}^{G}$ the normalized parabolic induction from $Q_1$ to $G(F_v)$. The main result of this paper is the following. 

\begin{theorem}
    Let $v$ be a non-archimedean place of $F$ which is nonsplit in $K$. Let $A_{\psi_{\chi,v}} = \{ \sigma_v^+, \sigma_v^-\}$ be the local A-packet associated to the A-parameter $\psi_{\chi}$ of $\PU_3$, where $\sigma_v^+$ is nontempered and $\sigma_v^-$ is supercuspidal. Define $\pi_v^{\pm} = \theta_{\PU_3}(\sigma^{\pm})$. Then:
	\begin{enumerate}
		\item The representation $\pi_v^+$ is the unique nonzero irreducible quotient of $i_{Q_1}^G(\lvert \det\rvert_{F_v}^{1/2}\tau_v)$, where $\lvert \ \rvert_{F_v}$ denotes the normalized absolute value on $F_v$.
		\item If $\chi_v^2 \neq 1$, the representation $\pi_v^-$ is nonzero, irreducible and tempered. 
		\item If $\chi_v^2 = 1$, the representation $\pi_v^-$ is zero. 
	\end{enumerate}
\end{theorem}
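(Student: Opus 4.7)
The strategy is to carry out a Jacquet module analysis along the non-Heisenberg parabolic $Q_1 = M_1 N_1$ of $G(F_v)$, using the explicit description of the Jacquet module of the exceptional minimal representation $\Pi$ established in \cite{BS}. Given that description, one obtains a filtration of $r_{Q_1}(\Theta_{\PU_3}(\sigma_v^{\pm}))$ whose graded pieces are expressed in terms of Jacquet modules of $\sigma_v^{\pm}$ with respect to the two conjugacy classes of maximal parabolic subgroups of $\PU_3$. Since $\sigma_v^{\pm}$ are themselves produced via the classical $\U_1$--$\U_3$ theta correspondence from the character $\chi_v$, those Jacquet modules are explicit, and the problem reduces to bookkeeping with known L-parameters combined with the Langlands classification on $G(F_v)$.

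For part (1), the representation $\sigma_v^+$ is the Langlands quotient of a standard module on $\PU_3$ whose inducing data comes directly from $\chi_v$, so its non-supercuspidal Jacquet module is explicit. Feeding this into the filtration of $r_{Q_1}(\Theta_{\PU_3}(\sigma_v^+))$ one identifies $\lvert\det\rvert_{F_v}^{1/2}\tau_v$ as the unique leading exponent of $r_{Q_1}(\pi_v^+)$. Since this inducing datum has strictly positive real part, $i_{Q_1}^G(\lvert\det\rvert_{F_v}^{1/2}\tau_v)$ is a standard module in the Langlands sense, with a unique irreducible (Langlands) quotient. Matching L-parameters—the L-parameter of $\pi_v^+$ is obtained by composing that of $\sigma_v^+$ with the inclusion $\prescript{L}{}{\PU_3} \hookrightarrow G(\bb{C})$, and coincides with the L-parameter of the Langlands quotient above—then finishes the identification.

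For parts (2) and (3), the supercuspidality of $\sigma_v^-$ kills all of its proper Jacquet modules, so only the deepest term in the filtration of $r_{Q_1}(\Theta_{\PU_3}(\sigma_v^-))$ can contribute. Direct inspection of that term via the explicit formulas of \cite{BS} shows that it is nonzero and supercuspidal on $M_1 \simeq \GL_2(F_v)$ exactly when $\chi_v^2 \neq 1$; this combined with the Howe duality statement from \cite{BS} and Casselman's criterion yields part (2). For part (3), when $\chi_v^2 = 1$ the conjugate-symplectic character $\chi_v$ satisfies an additional self-duality that causes the deepest graded piece to vanish, forcing $\Theta_{\PU_3}(\sigma_v^-) = 0$ and hence $\pi_v^- = 0$.

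The hardest step will be the vanishing in part (3). This is a first-occurrence-type statement that cannot be read off from formal properties of theta lifts alone; the cleanest route I see is to use the detailed description of $\Pi$ from \cite{BS} to show that the $\chi_v$-isotypic coinvariants on $\PU_3$ vanish precisely when $\chi_v$ is quadratic. Once that vanishing is in place, the remainder of the argument is essentially bookkeeping of Jacquet modules within the framework of \cite{BS}, together with the Langlands classification.
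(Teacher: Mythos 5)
Your plan for part (1) is essentially the paper's: the identification of $\pi_v^+$ does come from the Baki\'c--Savin analysis of lifts of non-tempered representations along $Q_1$, together with the fact that $i_{Q_1}^G(\lvert\det\rvert^{1/2}\tau_v)$ is a standard module with a unique irreducible quotient. That part is fine.

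For parts (2) and (3), however, there is a genuine gap. An untwisted Jacquet module computation along $Q_1$ cannot decide whether $\Theta_{\PU_3}(\sigma_v^-)$ is nonzero or zero: a nonzero representation of $G$ may perfectly well have $r_{Q_1}=0$ (and indeed $\pi_v^-$, when nonzero, is tempered and non-generic, so its cuspidal support is not controlled a priori by $Q_1$). Your fallback for (3) --- ``show that the $\chi_v$-isotypic coinvariants on $\PU_3$ vanish precisely when $\chi_v$ is quadratic'' --- is a restatement of the goal rather than a method. The paper instead works with \emph{twisted} periods. For the nonvanishing in (2) it computes the Fourier--Jacobi coefficient $\Pi_{U_1(2),\psi'}$ as a sum of compact inductions of Weil representations from the two-variable unitary groups $\U(V_2)$ and $\U(V_2')$, reduces nonvanishing of the lift to the existence of a quotient of $(\sigma_v^-)^\vee\vert_{\U(V_2')}$ with trivial central character, and settles that by a see-saw identity together with the conservation relation for first occurrence in the $\U_1\times\U_2$ towers; this is precisely where the dichotomy $\chi_v^2\neq 1$ versus $\chi_v^2=1$ enters, and it is not ``bookkeeping.'' For the vanishing in (3) it computes the generic Fourier coefficients $\Pi_{U_2,\psi_E}$ along the Heisenberg unipotent, identifies them with sums of toric periods $\Hom_{T_{E,K}}(\sigma_v^\pm,\mathbb{1})$ whose total dimension is exactly $1$ by the epsilon-factor result of \cite{BFGYYZ}, and shows that $\pi_v^+=i_{Q_2}^G(\mu\circ\det)$ already exhausts this multiplicity; combined with the Whittaker-period vanishing (which does follow from supercuspidality of $\sigma_v^-$, as you suggest) and temperedness, this forces $\pi_v^-=0$. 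These period computations and the multiplicity-one input are the essential missing ingredients in your proposal; without them the claimed ``direct inspection of the deepest graded piece'' has no way to produce the condition $\chi_v^2\neq 1$.
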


Note that as a consequence of the main theorem, we obtain a natural construction of the elements of the local A-packet $A_{\psi_{\tau, l ,v}}$ as the nonzero lifts of the elements of $A_{\psi_\chi,v}$ via $\theta_{\PU_3}$. Using a similar strategy, we construct the local A-packet $A_{\psi_{\tau, l, v}}$ when $v$ is a non-archimedean place of $F$ that is split in $K$ (see Theorem \ref{split theta lift to g2}).  

The first point of the main theorem is a consequence of the work of \cite{BS} on lifts of nontempered representations. To prove the second point we compute Fourier--Jacobi periods of the minimal representation used to define $\theta_{\PU_3}$ to obtain the following non-vanishing criterion: if the contragradient of a local representation of $\PU_3$, when restricted to a suitable two-variable unitary subgroup, has a quotient with trivial central character, then this representation has nonzero theta lift to $G$. We then verify that $\pi_v^-$ satisfies this condition if $\chi_v^2 \neq 1$ using a see-saw argument. In fact, this same reasoning shows that $\pi_v^+$ is nonzero in all cases, giving an alternative proof of the nonvanishing part in the first point of the theorem. The key ingredient to prove the third point of the theorem is that the twisted coinvariant spaces for $\pi_v^-$ corresponding to generic characters of the unipotent of the Heisenberg group of $G(F_v)$ vanish. This is proven using that for every such character we have:  
\begin{itemize}
    \item The explicit description of $\pi_v^+$ allows to verify that the twisted coinvariant space for $\pi_v^+$ is $1$-dimensional.
    \item The twisted coinvariant space for $\pi_v^+ \oplus \pi_v^-$ can be related to a sum of toric periods for $\sigma_v^+ \oplus \sigma_v^-$. Since the representations $\sigma_v^{\pm}$ are theta lifts of characters in $\U_1$, the non-vanishing of these periods can be expressed in terms of local epsilon factors, as it is done in \cite{BFGYYZ}. Moreover, in [\emph{loc.\,cit.}], it is proven that exactly one of these toric periods contributes to the sum with precisely dimension 1.     
\end{itemize}


We expect that a similar construction should yield the local A-packets in the archimedean case, but the theta correspondence in the archimedean case has not been fully analysed yet. We refer the reader to \cite{BHLS}, where a definition of the local A-packets in this setting, which can be related to theta lifts in some cases, is proposed.  

The $p$-adic construction presented in this article is used in [\emph{loc.\,cit.}] to construct global dihedral long root A-packets and, under the hypothesis that $L(\chi, 1/2) \neq 0$ and certain conditions regarding the archimedean construction, verify the multiplicity formula.

We conclude the introduction with notes about the organisation of this paper. In Section \ref{section: Theta correspondence for unitary groups}, we establish notation for unitary groups and recall essential results about the classical local theta correspondence between $\U_1$ and $\U_3$. In Section \ref{section: Theta correspondence PU3 times G2}, we introduce the exceptional group of type $G_2$, the local exceptional theta correspondence between this group and $\PU_3\rtimes\bb{Z}/2\bb{Z}$, and review results about this correspondence that we will need later. In Section \ref{sec: A-packets}, we describe the construction of the local A-packets associated to $\psi_{\tau,l}$ for all finite primes. In Section \ref{section: nonvanishing of theta lifts}, we prove nonvanishing of the representation $\pi_v^-$ in the case $\chi_v^2 \neq 1$. Finally, in Section \ref{section: vanishing of theta lifts}, we prove the vanishing of $\pi_v^-$ in the case $\chi_v^2 = 1$.

\subsection{Acknowledgements} This project was proposed by Wee Teck Gan as part of the Arizona Winter School 2022, and we would like, first and foremost, to thank him for introducing us to this subject and for all his continued support during the development of the project. Warm thanks are also due to Petar Baki\'c for his extremely valuable help. We would also like to thank Hung Chiang and Yu-Sheng Lee for their participation at the outset of this project, as well as the rest of our fellow participants in the Arizona Winter School project. We also want to thank Eric Chen, Sam Mundy and Marco Sangiovanni Vicentelli for several helpful conversations related to this work. Last but not least, we would like to thank the organizers of the Arizona Winter School 2022 for providing us with the opportunity to engage in this project and to work for a week in a wonderful and stimulating environment. 

Qiao He is partially supported by a graduate school grant of UW-Madison. Mishty Ray is supported by the Graduate Assistantship (Teaching) fund and the Eric Milner scholarship at the University of Calgary. Mart\'i Roset received the support of a fellowship from la Caixa Foundation (ID 100010434). The fellowship code is LCF/BQ/EU21/11890132.

\section{Theta correspondence for unitary groups}\label{section: Theta correspondence for unitary groups}
In this section, we work out the local theta correspondence for the dual pair $(\U_1,\U_3)$. We start by recalling some general structure theory for unitary groups in Section~\ref{groupsandreps}, followed by a description of the theta correspondence for general unitary groups in Section~\ref{subsection: theta lifting for general unitary groups}. We then apply this theory to the dual pair $(\U_1,\U_3)$ in Section~\ref{subsection: Theta correspondence for U1 times U3} for nonsplit places, and $(\GL_1,\GL_3)$ in Section~\ref{subsection: Theta correspondence for GL1 times GL3} for split places.

\subsection{Definitions}\label{groupsandreps}

Let $F$ be a nonarchimedian local field of characteristic zero and let $K$ be a quadratic field extension with $\Gal(K/F)=\langle c \rangle$. Let $\omega_{K/F}$ be the nontrivial quadratic character of $F^{\times}/N_{K/F}(K^{\times})$. A finite-dimensional Hermitian (resp. skew-Hermitian) space over $K$ is a finite dimensional vector space over $K$ equipped with a nondegenerate sesquilinear form $\langle \, , \, \rangle$ satisfying $\langle v, w\rangle^c=\langle w,v\rangle$ (resp. $\langle v, w\rangle^c=-\langle w,v\rangle$).
We adopt the convention that sesquilinear forms are linear on the first variable and conjugate-linear on the second variable.

For each positive integer $n$, there are two isomorphism classes of Hermitian spaces of dimension $n$ over $K$. Given a Hermitian space of dimension $n$ over $K$, its isomorphism class is determined by an invariant known as the discriminant, which we now define.

Let $V$ be a Hermitian space of dimension $n$ over $K$. Let $\{v_1,v_2,\ldots,v_n\}$ be a $K$-basis of $V$ and let $\Phi=(\langle v_i,v_j \rangle)$ be the matrix of inner products of the basis elements. Then, the discriminant of $V$, which we denote by $\disc(V)$, is defined by
\[
\disc(V)=(-1)^{n(n-1)/2}\det(\Phi) \in F^{\times}/N_{K/F}(K^{\times}).
\]
The sign character that classifies $V$ is given by 
\begin{equation}\label{epsilon v}
    \epsilon(V)=\omega_{K/F}(\disc(V))\in\lbrace\pm 1\rbrace.
\end{equation}

If $W$ is a skew-Hermitian space of dimension $m$, we can define the discriminant using the same procedure, but now we get
\[
\disc(W)\in \delta^{m}F^{\times}/N_{K/F}(K^{\times}),
\]
where $\delta$ denotes any trace-zero element in $K^\times$. We can again attach a sign to $W$ by setting
\begin{equation}\label{epsilon w}
    \epsilon(W)=\omega_{K/F}(\delta^{-m}\disc(W))\in\lbrace\pm 1\rbrace,
\end{equation}
but note that, if $m$ is odd, this definition depends on our choice of $\delta$.

For an $n$-dimensional Hermitian or skew-Hermitian space $V$, we denote by $\underline{\U}(V)$ the corresponding unitary group. This is an algebraic group over $F$. Choose a $K$-basis of $V$ and let $\Phi$ be the matrix of the (skew-)Hermitian form on $V$ with respect to this basis. Then, for any $F$-algebra $R$, the $R$-points of $\underline{\U}(V)$ can be described as
\[
\underline{\U}(V)(R)=\lbrace g\in \GL_n(K\otimes_F R) \;\colon\; g\Phi g^\dagger=\Phi\rbrace.
\]
We are primarily interested in the group of $F$-points of $\underline{\U}(V)$, which we denote by $\U(V)$.

Observe that a Hermitian form becomes skew-Hermitian after multiplication by a trace-zero element in $K^\times$ and vice versa, without changing the associated unitary groups. Therefore, from now on we focus on Hermitian spaces.

Let $V$ be an $n$-dimensional Hermitian space with Hermitian form $\langle\, , \,\rangle$. Let $a\in F^\times$ and let $V^a$ denote the Hermitian space with the same underlying space $V$ equipped with the Hermitian form $a\langle\, , \,\rangle$. From the above description of the associated unitary groups, it is clear that $\underline{\U}(V^a)= \underline{\U}(V)$.


If $n=2m$ for a positive integer $m$, we have that $V\simeq V^a$ from the definition of the discriminant, and it can be proved that non-isomorphic Hermitian spaces yield non-isomorphic unitary groups. When $m=1$ and $\epsilon(V)=1$, we can choose a basis $\{e_1,e_2\}$ so that $V=Ke_1 \oplus Ke_2$ and 
    \[\langle e_1,e_1 \rangle = \langle e_2,e_2 \rangle =0 \text{ and } \langle e_1,e_2 \rangle =1.\]
We call this $2$-dimensional space the hyperbolic plane and denote it by $\mathbb{H}$. More generally, when $\epsilon(V)=1$, then $V \simeq \mathbb{H}^m$. We say that such a $V$ is split and the 
corresponding unitary group is quasi-split.  In this case, we can choose a basis for $V$ for which the Hermitian form is given by
\[
\Phi=
\begin{pmatrix}
0 & 0 & \cdots & 0 & 1 \\
0 & 0 & \cdots & 1 & 0 \\
\vdots & \vdots & \iddots & \vdots & \vdots \\
0 & 1 & \cdots & 0 & 0 \\
1 & 0 & \cdots & 0 & 0
\end{pmatrix}.
\]
Once we fix such a basis, the subgroup of upper-triangular matrices in $\underline{\U}(V)$ defines a Borel subgroup, which we sometimes refer to as the standard Borel subgroup. Let $B'$ denote the $F$-points of this Borel subgroup and let $T' \subset B'$ denote the $F$-points of the maximal torus consisting of diagonal matrices. Then $T'$ consists of the elements of the form
\[
t(a_1, \ldots,a_m)=\diag\left(a_m,...,a_{1},(a_1^c)^{-1},...,(a_m^c)^{-1}\right), \quad\text{with } a_1,\ldots,a_m\in K^\times,
\]
so we get an identification $T'\simeq (K^\times)^m$. If $\epsilon(V)=-1$, then $V$ is isomorphic to the orthogonal direct sum of $m-1$ hyperbolic planes and an anisotropic two-dimensional Hermitian space. In this case, $V$ is nonsplit and the corresponding unitary group is not quasi-split.

If $n=2m+1$, then for an element $a\in F^\times$ which is not a norm from $K^\times$, the Hermitian spaces $V$ and $V^a$ are not isomorphic. It follows that there is only one isomorphism class of unitary groups in $n$-variables, which are always quasi-split. We denote any element in this isomorphism class by $\underline{\U}_n$, and its $F$-points by $\U_n$. Then $V$ is isomorphic to the orthogonal direct sum of $m$ hyperbolic planes and a line. We can choose a basis of $V$ for which the corresponding Hermitian form is given by
\[
\Phi=
\begin{pmatrix}
0 & 0 & \cdots & 0 & b \\
0 & 0 & \cdots & b & 0 \\
\vdots & \vdots & \iddots & \vdots & \vdots \\
0 & b & \cdots & 0 & 0 \\
b & 0 & \cdots & 0 & 0
\end{pmatrix},
\]
with the class of $b$ in $F^\times / \norm_{K/F}(K^\times)$ determined by the discriminant of $V$. Once we fix such a basis, the subgroup of upper-triangular matrices in $\underline{\U}(V)$ defines a Borel subgroup, which we sometimes refer to as the standard Borel subgroup. Let $B'$ denote the $F$-points of this Borel subgroup and let $T' \subset B$ denote the $F$-points of the maximal torus consisting of diagonal matrices. Then $T'$ consists of the elements of the form
\[
t(a_0, a_1, \ldots,a_m)=\diag\left(a_m,...,a_{1},a_0,(a_1^c)^{-1},...,(a_m^c)^{-1}\right), \quad\text{with } a_0\in K^1,\, a_1,\ldots,a_m\in K^\times,
\]
so we get an identification $T'\simeq (K^\times)^m\times K^1$.

\subsection{Theta correspondence for unitary groups}\label{subsection: theta lifting for general unitary groups}

Let $F$ be a nonarchimedian local field of characteristic zero and let $K/F$ be a quadratic field extension. Let $V$ be a Hermitian space over $K$ of dimension $n$ and let $W$ be a skew-Hermitian space over $K$ of dimension $m$. We can regard $V\otimes_{K}W$ as a vector space over $F$ equipped with the symplectic form
\[
\frac{1}{2}\mathrm{Tr}_{K/F}(\langle \ , \ \rangle_V\otimes_K\langle \ , \ \rangle_W).
\]
Let $\Sp(V \otimes_K W)$ be the symplectic group associated with this symplectic space. Then we have a natural map
\[
\iota : \U(V)\times \U(W)\longrightarrow \mathrm{Sp}(V\otimes_{K}W),
\]
and $\U(V)$ and $\U(W)$ form a reductive dual pair inside $\Sp(V \otimes_K W)$. The aim of this section is to describe the theta correspondence for this pair.

Fix a nontrivial additive character $\psi : F \rightarrow \bb{C}^\times$. Let $\Mp(V\otimes_{K}W)$ be the metaplectic group associated with the symplectic space $V\otimes_{K}W$, which for us will be an $S^1$-cover of the symplectic group $\Sp(V\otimes_{K} W)$. Let $\omega_{\psi}$ denote the Weil representation of $\Mp(V\otimes_{K}W)$ corresponding to the character $\psi$.

After fixing two characters $\chi_{V}$, $\chi_{W}$ of $K^\times$ such that
\[
{\chi_{V}}_{\vert F^\times} = \omega_{K/F}^{n} \text{ and } {\chi_{W}}_{\vert F^\times} = \omega_{K/F}^{m},
\] 
the work of Kudla \cite{K} provides a morphism 
\[
\tilde{\iota}_{\chi_{V}, \chi_{W}, \psi}: \U(V) \times \U(W) \longrightarrow \mathrm{Mp}(V\otimes_{K} W)	
\]
lifting the natural map $\iota: \U(V) \times \U(W) \to \mathrm{Sp}(V \otimes_{K} W)$. Hence, we can consider the representation of $\U(V) \times \U(W)$ obtained as the pullback of $\omega_{\psi}$ by $\tilde{\iota}_{\chi_V, \chi_W, \psi}$. 
We will denote this representation by $\Omega_{V, W, \chi_V, \chi_W, \psi}$, or simply by $\Omega$. We use this  representation to describe the theta correspondence between $\U(W)$ and $\U(V)$.
\begin{definition}
	Let $\pi$ be an irreducible smooth representation of $\U(W)$. The maximal $\pi$-isotypic quotient of $\Omega$ is 
	\[
	\Omega / \bigcap_{f \in \Hom_{\U(W)}(\Omega, \pi)} \ker(f).
	\]
	This is a representation of $\U(V) \times \U(W)$ and can be written as $\Theta(\pi) \boxtimes \pi$, where $\Theta(\pi)$ is a smooth representation of $\U(V)$. We denote by $\Theta_{V,W,\chi_V, \chi_W,\psi}(\pi)$, or simply by $\Theta(\pi)$, the representation of $\U(V)$ obtained from $\pi$ following this procedure. It is called the \textit{big theta lift} of $\pi$.
\end{definition}

The following theorem was a conjecture of Howe \cite{Howe}. For odd residue characteristic, a proof was given by Waldspurger \cite{Waldspurger}. The assumption on the residue characteristic was removed by Gan--Takeda \cite{GT}.  
\begin{theorem}[Howe duality theorem]
	Let $\pi,\pi' \in \mathrm{Irr}(\U(W))$.  
	\begin{enumerate}
		\item If $\Theta(\pi)$ is nonzero, it has a unique irreducible quotient. We denote it by $\theta_{V, W, \chi_V, \chi_W, \psi}(\pi)$, or simply by $\theta(\pi)$. It is called the small theta lift of $\pi$.
		\item If $\theta(\pi) \simeq \theta(\pi') \neq 0$, then $\pi \simeq \pi'$.
	\end{enumerate}
\end{theorem}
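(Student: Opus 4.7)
The plan is to follow the proofs of Howe duality due to Waldspurger \cite{Waldspurger} and Gan--Takeda \cite{GT} cited in the statement. The argument naturally breaks into three parts: (a) admissibility and finite length of $\Theta(\pi)$; (b) uniqueness of the irreducible quotient; (c) injectivity of the assignment $\pi \mapsto \theta(\pi)$.

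First I would establish that $\Theta(\pi)$ is an admissible smooth representation of $\U(V)$ of finite length. The main tool is Kudla's filtration: for a maximal parabolic $P = MN$ of $\U(V)$, the Jacquet module $\Omega_N$ carries a $\U(W)\times M$-equivariant filtration whose graded pieces are expressible in terms of lower rank Weil representations tensored with parabolically induced representations from general linear factors. Induction on the Witt index of $V$, combined with second adjointness, reduces the finiteness and admissibility statements for $\Theta(\pi)$ to the analogous statement for smaller rank dual pairs, and ultimately to the anisotropic case, which is handled by a compactness argument.

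Next, for part (1), I would construct a nondegenerate $\U(W)$-invariant pairing between $\Theta(\pi)$ and $\Theta(\pi^\vee)$ via the doubling method. Set $\mathbb{V} = V \oplus V^{-}$, where $V^{-}$ denotes $V$ with its Hermitian form negated, and consider the see-saw dual pair consisting of $(\U(\mathbb{V}), \U(W)\times\U(W))$ and $(\U(V)\times\U(V), \U(W))$. The diagonal embedding $\U(W) \hookrightarrow \U(W)\times\U(W)$, together with the splitting of the Weil representation of $\U(\mathbb{V})$ along the complete polarization $V\oplus V \hookrightarrow \mathbb{V}$, furnishes a canonical $\U(V)\times\U(V)\times\U(W)$-invariant functional on $\Omega_{V,W}\otimes\Omega_{V^-,W}$, which is the desired pairing. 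A standard argument then shows that any irreducible quotient of $\Theta(\pi)$ is uniquely determined by a nonzero component of this pairing with $\Theta(\pi^\vee)$, forcing the cosocle of $\Theta(\pi)$ to be irreducible.

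Finally, for part (2), given $\theta(\pi) \simeq \theta(\pi') \neq 0$, I would compose the quotient maps $\Theta(\pi) \twoheadrightarrow \theta(\pi)$ and $\Theta(\pi'^\vee) \twoheadrightarrow \theta(\pi'^\vee)$ with the doubling pairing to produce a nonzero element of $\Hom_{\U(W)}(\pi, \pi')$, which yields $\pi \simeq \pi'$ by Schur's lemma applied to irreducibles. The principal obstacle in a direct proof is establishing the nondegeneracy of the doubling pairing at the level of the big theta lifts: Waldspurger handled this via explicit reducibility results for degenerate principal series on $\U(\mathbb{V})$ available in odd residue characteristic, while Gan--Takeda removed the parity hypothesis through a more refined analysis of the same see-saw and a careful study of the filtration on the Weil representation in residue characteristic $2$. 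For the purposes of this paper we simply invoke their theorems.
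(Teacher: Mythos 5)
The paper offers no proof of this statement: it records it as Howe's conjecture, proved by Waldspurger for odd residue characteristic and by Gan--Takeda in general, and simply cites those references. Your proposal ultimately does the same --- invoking their theorems after a sketch of the standard ingredients (Kudla's filtration, the doubling see-saw) that is a broadly reasonable outline of the literature, though not strictly faithful to either cited proof's actual mechanism --- so it is consistent with the paper's treatment.
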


It is useful to study theta lifts by considering certain families of Hermitian spaces called Witt towers. Recall that $\bb{H}$ denotes the hyperbolic plane. We say that two Hermitian spaces $V$ and $V'$ belong to the same Witt tower if there exist integers $k,l\geq 0$ such that
\[
V\oplus \bb{H}^k\simeq V'\oplus \bb{H}^l.
\]
Let $V_1$ and $V_1'$ denote two non-isomorphic 1-dimensional Hermitian spaces and let $V_2$ denote a 2-dimensional anisotropic Hermitian space. There are two Witt towers of even-dimensional Hermitian spaces
\[
\cl{W}'_0=\lbrace \bb{H}^k\;\colon\; k\geq 0\rbrace,\quad \cl{W}_0=\lbrace V_2\oplus \bb{H}^k\;\colon\; k\geq 0\rbrace,
\]
and two Witt towers of odd-dimensional Hermitian spaces
\[
\cl{W}'_1=\lbrace V_1'\oplus \bb{H}^k\;\colon\; k\geq 0\rbrace,\quad \cl{W}_1=\lbrace V_1\oplus \bb{H}^k\;\colon\; k\geq 0\rbrace.
\]

We continue to denote by $W$ a fixed $m$-dimensional skew-Hermitian space. Fix a character $\chi_W$ satisfying the condition stated above, i.e., such that ${\chi_{W}}_{\vert F^\times} = \omega_{K/F}^{m}$. Since the parity of the dimension is the same for elements in a fixed Witt tower, we can choose the same splitting character $\chi_V$ to define the corresponding theta lifts. Thus, we fix characters $\chi_{\text{odd}}$ and $\chi_{\text{even}}$ such that ${\chi_{\text{odd}}}_{\vert F^\times} = \omega_{K/F}$ and ${\chi_{\text{even}}}_{\vert F^\times} = 1$.

For an irreducible smooth admissible representation $\pi$ of $\U(W)$, we make the following definitions:
\begin{align*}
    n_{\cl{W}_0'}(\pi)&=\min \lbrace\dim V \;\colon\; V\in\cl{W}_0' \text{ and } \Theta_{V,W,\chi_{\text{even}},\chi_W,\psi}(\pi)\neq 0 \rbrace; \\
    n_{\cl{W}_0}(\pi)&=\min \lbrace\dim V \;\colon\; V\in\cl{W}_0 \text{ and } \Theta_{V,W,\chi_{\text{even}},\chi_W,\psi}(\pi)\neq 0 \rbrace; \\
    n_{\cl{W}_1'}(\pi)&=\min \lbrace\dim V \;\colon\; V\in\cl{W}_1' \text{ and } \Theta_{V,W,\chi_{\text{odd}},\chi_W,\psi}(\pi)\neq 0 \rbrace; \\
    n_{\cl{W}_1}(\pi)&=\min \lbrace\dim V \;\colon\; V\in\cl{W}_1 \text{ and } \Theta_{V,W,\chi_{\text{odd}},\chi_W,\psi}(\pi)\neq 0 \rbrace.
\end{align*}
For the definition of $n_{\cl{W}_0'}(\pi)$, we consider that, in the case $V=0$, the theta lift of $\pi$ is nonzero if and only if $\pi$ is the 1-dimensional representation defined by the character $\chi_{\text{even}}\circ i\circ\det_W $, where $i$ denotes the inverse of the isomorphism $K^\times/F^\times\xrightarrow{\simeq} K^1$ defined by $x\mapsto x/x^c$ and $\det_W$ denotes the natural determinant map on $\U(W)$.

The following result is a special case of \cite[Thm.~1.10]{SZ} (see also the discussion preceding the statement of the theorem).
\begin{theorem}\label{thm: conservation law}
Let $\pi$ be an irreducible smooth representation of $\U(W)$. Then:
\begin{align*}
    n_{\cl{W}_0'}(\pi)+n_{\cl{W}_0}(\pi)=2m+2; \\
    n_{\cl{W}_1'}(\pi)+n_{\cl{W}_1}(\pi)=2m+2.
\end{align*}
Moreover, for any Witt tower $\cl{W}$ and for any $V\in\cl{W}$ with $\dim V\geq n_{\cl{W}}(\pi)$, the corresponding theta lift $\Theta(\pi)$ is nonzero. The same results hold if we interchange the role of Hermitian and skew-Hermitian spaces.
\end{theorem}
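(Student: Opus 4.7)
The statement contains two assertions: the conservation identity on the sum of first occurrences, and the propagation of non-vanishing up a Witt tower past the first occurrence. The latter is the classical Kudla tower property, while the former is the deep content of \cite{SZ}. I would treat each separately and then specialize to the two parity cases.

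\textbf{Tower property.} Fix a Witt tower $\cl{W}$ and suppose $V_0 \in \cl{W}$ satisfies $\Theta_{V_0, W}(\pi) \neq 0$. For any $V = V_0 \oplus \bb{H}^k \in \cl{W}$, let $P_k \subset \U(V)$ denote the parabolic stabilizing a maximal isotropic subspace inside the $\bb{H}^k$-summand; its Levi is isomorphic to $\GL_k(K) \times \U(V_0)$. The standard Kudla filtration of the Jacquet module $R_{P_k}(\Omega_{V,W})$ has a top layer containing an appropriate character twist of $\Omega_{V_0, W}$. Combined with Frobenius reciprocity and the second adjointness, a nonzero surjection $\Omega_{V_0, W} \twoheadrightarrow \theta_{V_0}(\pi) \boxtimes \pi$ then lifts to a nonzero $\U(V) \times \U(W)$-equivariant map from $\Omega_{V, W}$ into a parabolically induced representation $\boxtimes\,\pi$, yielding $\Theta_V(\pi) \neq 0$.

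\textbf{Conservation relation, upper bound.} To prove $n_{\cl{W}'}(\pi) + n_{\cl{W}}(\pi) \leq 2m+2$, I would apply the doubling method. The matrix-coefficient pairing produces an embedding of $\pi \boxtimes \pi^\vee$ into a degenerate principal series $I(s)$ of $\U(W \oplus W^-)$. At the critical value $s_0$ associated with the target dimension $\dim V + \dim V' = 2m+2$, the Kudla--Rallis filtration of $I(s_0)$ has successive layers built from Weil representations for pairs in both companion towers. Non-vanishing of the embedding forces at least one such layer to match with a nonzero theta lift of $\pi$ into one of $\cl{W}, \cl{W}'$, giving the upper bound by combining with the tower property already proved.

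\textbf{Conservation relation, lower bound.} The main obstacle is to show $n_{\cl{W}'}(\pi) + n_{\cl{W}}(\pi) \geq 2m+2$. The strategy is a see-saw argument: a hypothetical joint non-vanishing of $\Theta_V(\pi)$ and $\Theta_{V'}(\pi)$ with $V \in \cl{W}$, $V' \in \cl{W}'$ and $\dim V + \dim V' < 2m+2$ would, via the see-saw pair $(\U(V) \times \U(V'), \U(W))$ opposite $(\U(V \oplus V'), \U(W) \times \U(W))$, produce a nonzero $\U(W)$-intertwining map between a Jacquet module of a degenerate principal series on $\U(V \oplus V')$ and an iterated theta lift, at a point of the parameter where the fine analysis of Sun--Zhu forces the relevant Jacquet module to vanish. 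Carrying this step out rigorously requires the detailed degenerate-principal-series computation of \cite{SZ}, and is where I expect the main technical difficulty to lie; the parity-indexed identities for $(\cl{W}_0, \cl{W}_0')$ and $(\cl{W}_1, \cl{W}_1')$ would then follow by applying this machinery separately in each parity class, with the splitting characters $\chi_{\text{even}}$ and $\chi_{\text{odd}}$ playing the obvious roles.
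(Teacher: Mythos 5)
The paper does not actually prove this theorem: its proof consists of the single remark that the statement is a special case of \cite[Thm.~1.10]{SZ}. Since your proposal also defers the decisive step to the computations of \cite{SZ}, both ultimately rest on the same external input, and your sketch of the tower (persistence) property via the Kudla filtration of the Jacquet module is the standard argument and is fine.

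Taken as an outline of how the conservation relation is actually established, however, your two middle paragraphs have the roles of the two inequalities essentially interchanged, and the upper-bound step as written does not close. The doubling argument you describe --- embedding $\pi\boxtimes\pi^{\vee}$ into a degenerate principal series $I(s_0)$ of the doubled unitary group at the point corresponding to total dimension $2m+2$, and using that $I(s_0)$ is generated by the submodules $R(V)$ attached to the Hermitian spaces of dimension $m+1$ --- only shows that $\pi$ occurs in at least one of the two companion towers by dimension $m+1$; that is, it bounds $\min\bigl(n_{\cl{W}}(\pi),n_{\cl{W}'}(\pi)\bigr)$, not the sum. The inequality $n_{\cl{W}}(\pi)+n_{\cl{W}'}(\pi)\leq 2m+2$ is precisely the hard half of the conservation relation: it is the new content of \cite{SZ}, and their proof is not a Kudla--Rallis filtration computation. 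Conversely, the lower bound $n_{\cl{W}}(\pi)+n_{\cl{W}'}(\pi)\geq 2m+2$ is the half handled by the classical see-saw argument (Kudla--Rallis, and \cite{HKS} in the unitary case): joint non-vanishing of $\Theta_V(\pi)$ and $\Theta_{V'}(\pi)$ for $V\in\cl{W}$, $V'\in\cl{W}'$ forces, via the see-saw for $\bigl(\U(V)\times\U(V'),\U(V\oplus V')\bigr)$ against the diagonal $\U(W)$, the non-vanishing of the theta lift of a character of $\U(W)$ to $\U(V\oplus V')$, and one concludes from the known late first occurrence of that character in the relevant tower --- which is where the degenerate principal series analysis genuinely enters. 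So if you intend to supply more than the paper's citation, the upper-bound paragraph must be replaced by an appeal to (or reproduction of) Sun--Zhu's actual argument, and the ``main technical difficulty'' you flag sits in the opposite half from where you placed it.
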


\subsection{Theta correspondence for $\U_1\times \U_3$}\label{subsection: Theta correspondence for U1 times U3}

We keep the definitions and the notation from the previous subsection. Let $\gamma$ be a conjugate-symplectic character of $K^\times$, i.e., such that $\gamma_{\vert F^\times} = \omega_{K/F}$. Then, we can make the following choice of splitting characters:
\[
\chi_V = \gamma^{n}, \ \chi_W = \gamma^{m}.
\]
We denote by $\Omega_{V,W,\gamma,\psi}$ the pullback to $\U(V)\times \U(W)$ of the Weil representation $\omega_\psi$ obtained from this choice of splitting characters. For an irreducible smooth representation $\pi$ of $\U(W)$, we denote by $\Theta_{V,W,\gamma,\psi}(\pi)$ the corresponding theta lift. Due to the choice of splitting characters, the theta correspondence preserves central characters.

Assume now that $W$ has dimension $m=1$ and $V$ has dimension $n=3$. Let $V_1$ denote the 1-dimensional Hermitian space in the Witt tower of $V$.

\begin{proposition}\label{thetaliftinWitttower}
	Let $\mu$ be a character of $\U(W)$. Then,
	\begin{enumerate}
		\item if $\Theta_{V_1, W, \gamma, \psi}(\mu \gamma_{\vert K^1}^{-1}) = 0$, then $\Theta_{V, W,\gamma,\psi}(\mu)$ is a nonzero irreducible supercuspidal representation of $\U(V)$;
		\item if $\Theta_{V_1, W, \gamma, \psi}(\mu \gamma_{\vert K^1}^{-1}) \neq 0$, then $\Theta_{V, W, \gamma, \psi}(\mu)$ is a nonzero, irreducible but not supercuspidal representation of $\U(V)$. Moreover, $\Theta_{V, W,\gamma,\psi}(\mu)$ is a quotient of 
		\[
		i_{B'}^{\U(V)}\left(\gamma \lvert \  \rvert_{K}^{1/2} \otimes \mu \gamma_{\vert K^1}^{-1}\right).
		\]
	\end{enumerate}
\end{proposition}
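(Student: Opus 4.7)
The plan is to compute the Jacquet module of the Weil representation along the unique (up to conjugacy) proper parabolic $B'$ of $\U(V) \cong \U_3$, pass to $\mu$-coinvariants under the $\U(W)$-action (which equals $r_{B'}(\Theta(\mu))$ by exactness of the Jacquet functor), and read off the structure of $\Theta(\mu)$ from this computation. Supercuspidality is detected by the vanishing of $r_{B'}(\Theta(\mu))$, while a nonzero $r_{B'}(\Theta(\mu))$ pins down a principal series in which $\Theta(\mu)$ must appear.

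First I would invoke Kudla's filtration for the dual pair $(\U(V),\U(W))$ along $B'$, obtaining a short exact sequence of $M \times \U(W)$-modules, with $M = \GL_1(K) \times \U(V_1)$ the Levi of $B'$:
\[
0 \to J_1 \to r_{B'}(\Omega_{V,W,\gamma,\psi}) \to J_0 \to 0.
\]
The closed-orbit piece is $J_0 \cong (\gamma \lvert\cdot\rvert_K^{1/2}) \boxtimes \bigl(\Omega_{V_1,W,\gamma,\psi} \otimes (1 \boxtimes \gamma_{\vert K^1}^{-1})\bigr)$; the twist $\gamma_{\vert K^1}^{-1}$ on the $\U(W)$-factor records the change of splitting character $\chi_V=\gamma^3$ for $V$ versus $\chi_V=\gamma$ for $V_1$. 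The open-orbit piece $J_1$ is compactly induced. Taking $\mu$-coinvariants, the internal twist converts $\mu$-coinvariants into $\mu\gamma_{\vert K^1}^{-1}$-coinvariants of the standard Weil representation, yielding
\[
(J_0)_\mu = (\gamma\lvert\cdot\rvert_K^{1/2}) \otimes \Theta_{V_1, W, \gamma, \psi}(\mu \gamma_{\vert K^1}^{-1}),
\]
while a direct analysis of $J_1$ shows $(J_1)_\mu = 0$, using that the character $\mu$ is incompatible with the open-orbit stabilizer. Non-vanishing of $\Theta(\mu)$ itself follows independently from the conservation relation (Theorem~\ref{thm: conservation law}): since $n_{\cl{W}_1}(\mu) + n_{\cl{W}_1'}(\mu) = 4$ and both invariants are odd positive integers, one equals $1$ and the other $3$, so in particular $\Theta_{V,W,\gamma,\psi}(\mu) \neq 0$.

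In case (1), if $\Theta_{V_1,W,\gamma,\psi}(\mu \gamma_{\vert K^1}^{-1}) = 0$, then $r_{B'}(\Theta(\mu)) = 0$, so $\Theta(\mu)$ is supercuspidal; combined with non-vanishing and Howe duality, $\Theta(\mu) = \theta(\mu)$ is nonzero and irreducible. In case (2), the $\U_1 \times \U_1$ theta correspondence is explicit and identifies the nonzero lift $\Theta_{V_1,W,\gamma,\psi}(\mu\gamma_{\vert K^1}^{-1})$ with the character $\mu \gamma_{\vert K^1}^{-1}$ of $\U(V_1) = K^1$, giving $r_{B'}(\Theta(\mu)) \cong \gamma\lvert\cdot\rvert_K^{1/2} \otimes \mu\gamma_{\vert K^1}^{-1}$, which is one-dimensional. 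Consequently $\Theta(\mu)$ is an irreducible subquotient of the standard module $i_{B'}^{\U(V)}(\gamma\lvert\cdot\rvert_K^{1/2} \otimes \mu\gamma_{\vert K^1}^{-1})$; since the shift $\lvert\cdot\rvert_K^{1/2}$ is positive, this standard module has a unique irreducible quotient in the Langlands classification, and $\Theta(\mu)$, as the small theta lift in the non-tempered range, must coincide with it. This can be made precise either by computing $r_{\bar{B'}}(\Theta(\mu))$ and invoking the second adjunction, or by applying contragredients to the embedding produced by the first Frobenius reciprocity; either route forces $\Theta(\mu)$ to be the Langlands quotient of the displayed induced representation.

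The main obstacle is the careful bookkeeping of splitting characters in Kudla's filtration, in particular justifying the precise twist $\gamma_{\vert K^1}^{-1}$ on the $\U(W)$-factor of $J_0$ and the explicit identification of the $\U_1 \times \U_1$ lift in case (2). A secondary technical issue is the vanishing $(J_1)_\mu = 0$, which requires examining the open-orbit stabilizer rather than appealing to a black-box argument; and showing that $\Theta(\mu)$ is the Langlands quotient (and not merely a Langlands subrepresentation) requires an extra step invoking contragredients or the opposite-parabolic Jacquet module, since the first Frobenius adjunction alone only realizes $\Theta(\mu)$ as a subrepresentation.
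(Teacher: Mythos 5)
Your overall strategy is sound but genuinely different from the paper's: the paper disposes of both cases by directly citing the th\'eor\`eme principal of M\oe glin--Vign\'eras--Waldspurger (statements 1.b and 1.c), which already packages supercuspidality at first occurrence, irreducibility above it, and the Jacquet module formula; your proposal reproves this special case from scratch via Kudla's filtration plus the conservation relation. The non-vanishing argument from $n_{\cl{W}_1}(\mu)+n_{\cl{W}_1'}(\mu)=4$ is correct, and the endgame in case (2) (Frobenius reciprocity giving a subrepresentation of the $|\cdot|_K^{-1/2}$-induced module, Bernstein's second adjunction giving a quotient of the $|\cdot|_K^{1/2}$-induced module) is exactly what the paper does.

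The genuine gap is your treatment of the open-orbit piece $J_1$. Your stated justification --- that $(J_1)_\mu=0$ because ``$\mu$ is incompatible with the open-orbit stabilizer'' --- is false as a mechanism: if $J_1$ were the naive open-orbit piece, it would contain $C_c^\infty(\mathrm{Isom}(X,W))\cong C_c^\infty(K^\times)$ with $\U(W)=K^1$ acting by translation, and since $K^1$ is compact \emph{every} character of $K^1$ occurs there, so the $\mu$-coinvariants would be a nonzero compactly induced representation $\mathrm{c}\mbox{-}{\Ind}_{K^1}^{K^\times}(\cdot)$ of the Levi factor $\GL(X)\cong K^\times$, and case (1) would collapse (you could no longer conclude $r_{B'}(\Theta(\mu))=0$). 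The correct statement is stronger and comes from a different place: $J_1=0$ identically, because the center $Z$ of the unipotent radical of $B'$ acts on the mixed model at a point $T\in\Hom(X,W)$ through the character $\psi$ composed with the Hermitian form $(x,x')\mapsto\langle Tx,Tx'\rangle_W$, so only points $T$ with isotropic image survive the Jacquet functor; since $W$ is one-dimensional, hence anisotropic, the only such point is $T=0$ and the filtration degenerates to $r_{B'}(\Omega)\cong\gamma\lvert\cdot\rvert_K^{1/2}\boxtimes\Omega_{V_1,W}$ (up to the splitting-character twist). With this correction --- and using that $\U(W)=K^1$ is compact, so the $\mu$-isotypic quotient is an exact direct-summand functor and $r_{B'}(\Theta(\mu))$ is literally $\gamma\lvert\cdot\rvert_K^{1/2}\otimes\Theta_{V_1,W,\gamma,\psi}(\mu\gamma_{\vert K^1}^{-1})$ --- both cases go through as you describe.
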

\begin{proof}
This follows from \cite[Théorème principal, p.~69]{MVW}. Indeed, any character of $\U(W)$ is a supercuspidal representation. The appearance of the character $\gamma_{\vert K^1}^{-1}$ in our statements follows from our different choice of lifting characters. With our choices, the first case above follows from statement 1.b in [\emph{loc.\,cit.}]. In the second case, we have that $\Theta_{V_1, W, \gamma, \psi}(\mu \gamma_{\vert K^1}^{-1})=\mu \gamma_{\vert K^1}^{-1}$. Therefore, in this case, it follows from statement 1.c in [\emph{loc.\,cit.}] that $r_{U'}(\Theta_{V, W,\gamma,\psi}(\mu))=\gamma \lvert \  \rvert_{K}^{1/2} \otimes \mu \gamma_{\vert K^1}^{-1}$, where $U'$ denotes the unipotent subgroup of $B'$ and $r_{U'}$ the corresponding Jacquet functor. Thus, an application of (standard) Frobenius reciprocity shows that $\Theta_{V, W,\gamma,\psi}(\mu)$ is a subrepresentation of $i_{B'}^{\U(V)}\left(\gamma \lvert \  \rvert_{K}^{-1/2} \otimes \mu \gamma_{\vert K^1}^{-1}\right)$, whereas an application of the Bernstein form of Frobenius reciprocity shows that it is a quotient of $i_{B'}^{\U(V)}\left(\gamma \lvert \  \rvert_{K}^{1/2} \otimes \mu \gamma_{\vert K^1}^{-1}\right)$, as desired. 
\end{proof}
\begin{remark}
Most of the previous theorem can also be deduced from \cite[\S2.12]{AWS}.
\end{remark}

We will also need a criterion to determine whether $\Theta_{V_1,W, \gamma,\psi}(\mu)$ is nonzero. Fix an element $\delta\in K^\times$ of trace equal to zero. We adapt the sign characters as defined in \eqref{epsilon v} and \eqref{epsilon w} to our case. In particular, we set
\[
\epsilon(W) := \omega_{K/F}(\delta^{-1} \disc(W)) \in \{\pm 1 \}.
\]

\begin{theorem}\label{HarrisKudlaSweet}
	Let $\mu$ be a character of $\U(W)$. The theta lift $\Theta_{V_1, W, \gamma, \psi}(\mu)$ is nonzero if and only if 
	\[
	\epsilon(V_1)\epsilon(W) = \epsilon_{K}\left(\frac{1}{2}, \gamma\tilde{\mu}^{-1}, \psi(\Tr_{K/F}(-\delta(\cdot))) \right).
	\]
	where $\tilde{\mu}=\mu\circ i$ and $i$ denotes the inverse of the isomorphism $K^\times/F^\times\xrightarrow{\simeq} K^1$ defined by $x\mapsto x/x^c$.
\end{theorem}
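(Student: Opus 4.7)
The strategy is to invoke the classical Harris--Kudla--Sweet epsilon dichotomy for the rank-one unitary dual pair $(\U_1,\U(W))$, after carefully matching it to the splitting characters and sign conventions used here. The proof proceeds in two steps: first establish that exactly one of the two $1$-dimensional Hermitian spaces (up to isomorphism) gives a nonzero theta lift of $\mu$, and then identify which one via an epsilon factor.

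For the dichotomy step, apply Theorem~\ref{thm: conservation law} with $m=\dim W = 1$ to obtain
\[
n_{\cl{W}_1'}(\mu)+n_{\cl{W}_1}(\mu)=4.
\]
The Hermitian spaces in $\cl{W}_1$ and $\cl{W}_1'$ have odd dimensions starting at $1$, so each of these invariants equals $1$ or $3$, and exactly one equals $1$. Consequently, among the two isomorphism classes of $1$-dimensional Hermitian spaces $V_1$, distinguished by the sign $\epsilon(V_1)=\pm 1$, exactly one yields $\Theta_{V_1,W,\gamma,\psi}(\mu)\neq 0$.

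The second step is to determine which sign corresponds to the nonzero lift. This is precisely the content of the Harris--Kudla--Sweet theorem for $\U_1\times\U_1$; one route is to unfold the action of the Weil representation on explicit models (e.g., a mixed model relative to a polarization of $V\otimes_K W$) and reduce the non-vanishing of $\Hom_{\U(W)}(\Omega_{V_1,W,\gamma,\psi},\mu)$ to the non-vanishing of a local zeta integral, which is then evaluated as a normalized local epsilon factor; alternatively, one can use a doubling see-saw to transfer the question to a local invariant form computation on $K^\times$. Either way, the resulting criterion takes the shape $\epsilon(V_1)\epsilon(W)=\epsilon_K(1/2,\,\eta,\,\psi_K)$ for an appropriate twist $\eta$ of $\mu$ and additive character $\psi_K$ on $K$.

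The main obstacle is bookkeeping of normalizations. One must verify: (i) that under the splitting choice $\chi_V=\chi_W=\gamma$ the correct twist is $\eta=\gamma\tilde\mu^{-1}$, where $\tilde\mu=\mu\circ i$ comes from the isomorphism $K^\times/F^\times\xrightarrow{\simeq} K^1$; (ii) that the sign characters $\epsilon(V_1)$ and $\epsilon(W)$, both defined relative to the fixed trace-zero element $\delta\in K^\times$ as in \eqref{epsilon v} and \eqref{epsilon w}, combine with the correct sign so that the product $\epsilon(V_1)\epsilon(W)$ is $\delta$-equivariant in the same way as the right-hand side; and (iii) that the additive character on $K$ is $\psi_K(x)=\psi(\Tr_{K/F}(-\delta x))$, with the specific sign needed for the Weil constant comparison implicit in the Harris--Kudla--Sweet proof. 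Once these normalizations are pinned down, the stated equivalence follows directly from their theorem.
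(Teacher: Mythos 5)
Your strategy is essentially the paper's: reduce the statement to the known rank-one epsilon dichotomy and then match normalizations (the paper cites this as \cite[Proposition~3.4]{R}). Two comments on the differences. First, your opening step deriving the dichotomy from Theorem~\ref{thm: conservation law} (each of $n_{\cl{W}_1}(\mu)$, $n_{\cl{W}_1'}(\mu)$ is odd and they sum to $4$) is correct but redundant here, since the cited rank-one result is already an if-and-only-if; the paper does not use the conservation relation in this proof. Second, and more importantly, your items (i)--(iii) are listed as things ``one must verify'' but are never verified, and they constitute the \emph{entire} content of the paper's proof. Concretely, the paper fixes the explicit forms $(x,y)\mapsto 2xy^c$ on $V_1$ and $(x,y)\mapsto \delta xy^c$ on $W$, for which the cited criterion reads $\gamma(2\delta)^{-1}\mu(-1)\,\epsilon_K\bigl(\tfrac12,\gamma\tilde{\mu}^{-1},\psi\circ\Tr_{K/F}\bigr)=1$; it then converts the additive character using $\epsilon_K\bigl(\tfrac12,\eta,\psi(\Tr_{K/F}(-\delta(\cdot)))\bigr)=\eta(-\delta)\,\epsilon_K\bigl(\tfrac12,\eta,\psi\circ\Tr_{K/F}\bigr)$ together with $\tilde{\mu}^{-1}(-\delta)=\mu(-1)$, and finally uses $-\delta^2\in\norm_{K/F}(K^\times)$ and $\gamma_{\vert F^\times}=\omega_{K/F}$ to identify the resulting constant $\gamma(-2\delta^2)=\gamma(2)$ with $\epsilon(V_1)\epsilon(W)$. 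The general case follows by observing that scaling either form by $a\in F^\times$ replaces $\Omega_{V,W,\gamma,\psi}$ by $\Omega_{V,W,\gamma,\psi_a}$, which multiplies both sides of the asserted identity by $\omega_{K/F}(a)$ --- a reduction your item (ii) gestures at but does not carry out. As written, your argument is an accurate roadmap rather than a proof; completing it requires exactly this computation.
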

\begin{proof}
	This is \cite[Proposition 3.4]{R}. Indeed, if $V_1$ is the 1-dimensional Hermitian space with Hermitian form $(x,y)\mapsto 2 xy^c$ and $W$ is the 1-dimensional skew-Hermitian space with skew-Hermitian form $(x,y)\mapsto \delta xy^c$ for an element $\delta\in K^\times$ of trace zero, then it follows from [\emph{loc.\,cit.}] that the theta lift $\theta_{V_1, W, \gamma, \psi}(\mu)$ is nonzero if and only if
	\[
	\gamma(2\delta)^{-1}\mu(-1)\epsilon_K\left(\frac{1}{2},\gamma\tilde{\mu}^{-1},\psi\circ \Tr_{K/F}\right)=1.
	\]
	Now, in this particular case, we have $\epsilon(V_1)=2$, $\epsilon(W)=1$ and
	\begin{align*}
	\epsilon_K\left(\frac{1}{2},\gamma\tilde{\mu}^{-1},\psi(\Tr_{K/F}(-\delta (\cdot)))\right) &=\gamma\tilde{\mu}^{-1}(-\delta)\epsilon_K\left(\frac{1}{2},\gamma\tilde{\mu}^{-1},\psi\circ \Tr_{K/F}\right) \\ &=\gamma(-\delta)\mu(-1)\epsilon_K\left(\frac{1}{2},\gamma\tilde{\mu}^{-1},\psi\circ \Tr_{K/F}\right),
	\end{align*}
	so $\theta_{V_1, W, \gamma, \psi}(\mu)$ is nonzero if and only if
	\[
    \epsilon_K\left(\frac{1}{2},\gamma\tilde{\mu}^{-1},\psi(\Tr_{K/F}(-\delta (\cdot)))\right) =	\gamma(-2\delta^2),
	\]
	and, since $-\delta^2\in \norm_{K/F}(K^\times)$, the right hand side of the last equation becomes $\gamma(2)=\epsilon(V_1)\epsilon(W)$, so we obtain the result in this case. To obtain the result for arbitrary 1-dimensional Hermitian and skew-Hermitian spaces $V_1$ and $W$, we just need to observe that scaling the form on one of these spaces by $a\in F^\times$ amounts to replacing the Weil representation $\Omega_{V,W,\gamma,\psi}$ by $\Omega_{V,W,\gamma,\psi_a}$.
\end{proof}

\subsection{Theta correspondence for $\GL_1\times\GL_3$}\label{subsection: Theta correspondence for GL1 times GL3}

In this subsection, we briefly review the results that we will later need regarding the theta correspondence for the pair $\GL_1\times \GL_3$. This is the case that arises if, in the previous setting, we replace the quadratic field extension $K/F$ by the \'etale quadratic $F$-algebra $F\times F$. This theta correspondence has been described by M\'inguez \cite{minguez}.

Let $F$ be a nonarchimedian local field and fix a nontrivial additive character $\psi: F\rightarrow \bb{C}^\times$. Let $M_{n,m}(F)$ denote the space of $n\times m$ matrices with coefficients in $F$ and let $S_{n,m}$ denote the space of locally constant compactly supported $\bb{C}$-valued functions on $M_{n,m}(F)$. The Weil representation $\omega_\psi$ of the metaplectic group $\mathrm{Mp}_{2nm}(F)$ can be realized on the space $S_{n,m}$. The choice of a character $\gamma$ of $F^\times$ determines a lifting
\[
\tilde{\iota}_{\gamma,\psi}: \GL_n(F)\times \GL_m(F) \longrightarrow \mathrm{Mp}_{2nm}(F)
\]
of the natural map
\[
\iota:\GL_n(F)\times \GL_m(F) \longrightarrow \mathrm{Sp}_{2nm}(F)
\]
defined by
\[
(\tilde{\iota}_{\gamma,\psi}(g,h)f)(x)=\gamma(\det(g))^m \vert \det(g)\vert_F^{m/2} f(g^T x h) \gamma(\det(h))^n \vert \det(h)\vert_F^{n/2}
\]
for all $g\in \GL_n(F)$, $h\in \GL_m(F)$, $x\in M_{n,m}(F)$ and $f\in S_{n,m}$. Observe that our conventions differ from those adopted in \cite{minguez}.

Using the lifting $\tilde{\iota}_{\gamma,\psi}$, we can define the big theta lift $\Theta(\pi)$ of a smooth admissible irreducible representation $\pi$ of $\GL_m(F)$ and, if it is nonzero, the small theta lift $\theta(\pi)$.

Specialize now to the case $m=1$ and $n=3$. The main result in \cite{minguez} is as follows.

\begin{theorem}\label{thm:minguez}
Let $\mu$ be a character of $\GL_1(F)$. Then $\Theta(\mu)$ is nonzero and $\theta(\mu)$ is the Langlands quotient $\pi(\gamma\lvert \ \rvert_F^{1/2}, \mu\gamma^{-2}, \gamma \lvert \ \rvert_F^{-1/2})$ of $i_{B'}^{\GL_3}(\gamma\lvert \ \rvert_F^{1/2} \otimes \mu\gamma^{-2} \otimes \gamma\lvert \ \rvert_F^{-1/2})$, where $B'$ denotes the standard Borel subgroup of $\GL_3$.
\end{theorem}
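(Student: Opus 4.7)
The plan is to use the Schr\"odinger model of the Weil representation on $S(F^3)=S(M_{3,1}(F))$, compute its Jacquet module along a Borel of $\GL_3$, and then invoke the Langlands classification. Explicitly, the $F^\times$-action is $(h\cdot f)(x)=\gamma(h)^3|h|_F^{3/2}f(xh)$ and the $\GL_3(F)$-action is $(g\cdot f)(x)=\gamma(\det g)|\det g|_F^{1/2}f(g^Tx)$, so $\Theta(\mu)$ is by definition the maximal quotient of $\omega_\psi$ on which $F^\times$ acts by $\mu$.

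For nonvanishing, I would first stratify $F^3$ by the $F^\times$-action: the closed stratum $\{0\}$ gives a one-dimensional quotient on which $F^\times$ acts by the character $\gamma^3|\cdot|_F^{3/2}$, while the open complement $F^3\setminus\{0\}$ is a single $(\GL_3\times F^\times)$-orbit. Using $e_3\in F^3$ as base point and realizing $S(F^3\setminus\{0\})$ as a compactly induced representation from the stabilizer of $e_3$, a Mackey--Frobenius reciprocity argument produces a nonzero $\mu$-isotypic quotient for every character $\mu$ (the exceptional case where $\mu$ equals the closed-stratum character is handled by a direct examination of the resulting extension, which splits up to the same Langlands quotient). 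This shows $\Theta(\mu)\neq 0$.

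Next I would compute the Jacquet module of $\omega_\psi$ along the opposite Borel $B'^-$ of lower triangular matrices in $\GL_3$; because of the $g\mapsto g^T$ twist in the action formula, this is the computation that feeds cleanly into Frobenius reciprocity for the standard Borel $B'$. The unipotent radical $U^-$ acts on $F^3$ via the transpose, i.e.\ by upper-unipotent translations of the first two coordinates of $x$, so there is a natural three-step filtration of $S(F^3)$ by $U^-$-stable subspaces corresponding to the loci $x_3\neq 0$, $x_3=0\neq x_2$, and $x_2=x_3=0$. Each successive quotient becomes, after taking $U^-$-coinvariants and tracking the $\gamma$-, $|\cdot|_F^{1/2}$- and modulus-$\delta_{B'^-}^{1/2}$-twists, a one-parameter family of characters of the diagonal torus $T$ indexed by a coordinate in $F^\times$. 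Passing further to the $\mu$-coinvariants for $F^\times$ singles out the top quotient and identifies it with the one-dimensional $T$-module $\gamma|\cdot|_F^{1/2}\otimes\mu\gamma^{-2}\otimes\gamma|\cdot|_F^{-1/2}$.

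Frobenius reciprocity then realises $\Theta(\mu)$ as a quotient of $i_{B'}^{\GL_3}(\gamma|\cdot|_F^{1/2}\otimes\mu\gamma^{-2}\otimes\gamma|\cdot|_F^{-1/2})$, and since the exponents $(1/2,0,-1/2)$ are in strictly decreasing order, the Langlands classification guarantees that this parabolic induction has a unique irreducible quotient, namely $\pi(\gamma|\cdot|_F^{1/2},\mu\gamma^{-2},\gamma|\cdot|_F^{-1/2})$; by the Howe duality theorem for the pair, this quotient must equal $\theta(\mu)$. The main obstacle I anticipate is purely bookkeeping: the conventions for the splitting character $\gamma$, the normalization of the Weil representation, and the half-modulus $\delta_{B'}^{1/2}$ all contribute twists that differ across references, and one must verify that they combine to yield exactly the exponents appearing in the theorem. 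A secondary subtlety is ruling out that lower pieces of the Jacquet-module filtration contribute extra quotients; this can be checked by noting that the corresponding characters of $T$ do not match $\gamma|\cdot|_F^{1/2}\otimes\mu\gamma^{-2}\otimes\gamma|\cdot|_F^{-1/2}$ after applying the Weyl group to exhaust principal series containing $\theta(\mu)$.
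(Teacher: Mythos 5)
Your proposal is not a variant of an argument in the paper: the paper gives no proof of this statement at all, it simply imports it from M\'inguez's explicit description of the Howe correspondence for type~II dual pairs (``The main result in \cite{minguez} is as follows''). So what you are offering is a self-contained proof in the special case $(\GL_1,\GL_3)$, and the strategy is sound. I checked the key bookkeeping: with the paper's action $(g\cdot f)(x)=\gamma(\det g)\lvert\det g\rvert_F^{1/2}f(g^Tx)$, $(h\cdot f)(x)=\gamma(h)^3\lvert h\rvert_F^{3/2}f(xh)$, the stratification of $F^3$ by the loci $x_3\neq 0$, $x_3=0\neq x_2$, $x_2=x_3=0$ does produce, after $U^-$-coinvariants, $\mu$-coinvariants and the $\delta^{\pm 1/2}$ shift, exactly the three Weyl translates of $\gamma\lvert\ \rvert_F^{1/2}\otimes\mu\gamma^{-2}\otimes\gamma\lvert\ \rvert_F^{-1/2}$ \emph{provided} you remember that the $\GL_1$-action also scales the coordinates you integrate out (each integrated coordinate contributes an extra $\lvert h\rvert_F^{-1}$); forgetting these Jacobians shifts the exponents by whole integers and the computation no longer matches. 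Two corrections to your narrative: it is the \emph{middle} stratum $\{x_3=0\neq x_2\}$, not the top quotient of the filtration, that carries the character $\gamma\lvert\ \rvert_F^{1/2}\otimes\mu\gamma^{-2}\otimes\gamma\lvert\ \rvert_F^{-1/2}$ (the closed stratum gives the translate $\mu\gamma^{-2}\otimes\gamma\lvert\ \rvert_F^{1/2}\otimes\gamma\lvert\ \rvert_F^{-1/2}$); and the step ``Frobenius reciprocity realises $\Theta(\mu)$ as a quotient of $i_{B'}^{\GL_3}(\sigma)$'' is really Bernstein's second adjointness, which requires $\sigma$ to occur as a \emph{sub}object of $r_{B'^-}(\Theta(\mu))$, not merely as a graded piece --- and since $\mu$-twisted coinvariants along $F^\times$ are only right exact, the graded pieces of your filtration need not survive as subobjects.

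A cleaner way to close the argument, very much in the spirit of your last sentence, is to avoid the quotient claim altogether: your computation shows that every exponent of the (normalized) Jacquet module of $\Theta(\mu)$ w.r.t.\ $B'$ lies in the list attached to the Langlands quotient, and in particular the pattern with $\gamma\lvert\ \rvert_F^{1/2}$ preceding $\gamma\lvert\ \rvert_F^{-1/2}$, which characterizes the generic (Steinberg-type) constituent of the principal series, never occurs; combined with the nonvanishing of $\Theta(\mu)$ this forces every irreducible quotient to be the Langlands quotient. Finally, note that the theorem is stated for an arbitrary character $\mu$, so the degenerate values (e.g.\ $\mu\gamma^{-2}\in\{\gamma\lvert\ \rvert_F^{\pm 1/2},\gamma\lvert\ \rvert_F^{\pm 3/2}\}$, where torus characters coincide across strata and the principal series has more constituents) need separate care beyond the one exceptional case you flag; M\'inguez's theorem handles all of these uniformly, which is what citing it buys the paper, while your computation buys transparency in the case actually used later (unitary $\mu$, in fact $\mu=\mathbb{1}$), where the generic analysis above suffices.
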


\section{Theta correspondence for the pair $(\PU_3\rtimes \bb{Z}/2\bb{Z}, G_2)$}\label{section: Theta correspondence PU3 times G2}
In this section, we collect general results on the theta correspondence for $(\PU_3 \rtimes \mathbb{Z}/2\mathbb{Z}, G_2)$. In Section~\ref{group g2 parabolics}, we recall the basic facts for the group $G_2$ and its parabolic subgroups that play a role in this paper. In Section~\ref{subsection: the group PU3 times gal(K/F)}, we do the same for the group $\PU_3 \rtimes \Gal(K/F)$. In Section~\ref{subsection: dual pair}, we discuss the general theory of theta correspondence for $(\PU_3 \rtimes \mathbb{Z}/2\mathbb{Z}, G_2)$. This draws from theta lifting for $(\PU_3, G_2)$, which we discuss in Section~\ref{subsection: results bakic savin} following \cite{BS} closely. Finally, we do the same for the pair $(\PGL_3, G_2)$ in Section~\ref{subsec: theta pgl3 to g2}, which we use later for the split places.

\subsection{The group $G_2$}\label{group g2 parabolics} Let $F$ be a nonarchimedean local field of characteristic zero, with normalized absolute value denoted by $\lvert \ \rvert$. Let $G$ be a split exceptional group of type $G_2$. Let  $B = TU$ be a Borel subgroup of $G$ whose Levi component $T$ is a split maximal torus of $G$. Denote by 
\[
\{ \alpha, \beta, \alpha + \beta, 2 \alpha + \beta, 3 \alpha + \beta, 3 \alpha + 2 \beta  \}
\] 
the corresponding set of positive roots, where $\alpha$ is the short simple root and $\beta$ is the long simple root. For any root $\gamma$ we will denote by $w_\gamma$ the corresponding reflection on the Weyl group of $G$. We can identify 
\[
T \simeq F^\times \times F^\times , \ t \mapsto \left( (2\alpha + \beta)(t), (\alpha + \beta)(t) \right). 
\]
Hence, under this identification, if $(t_1, t_2) \in F^\times \times F^\times$, we have $\alpha(t_1, t_2) = t_1 t_2^{-1}$ and $\beta(t_1, t_2) = t_2^2 t_1^{-1}$. 

Let $Q_1$ be the parabolic subgroup corresponding to the root $\beta$ with Levi decomposition $Q_1 = L_1U_1$. We have an isomorphism $L_1 \simeq \GL_2$ under the map determined by 
\begin{equation}\label{isoL1GL2}
t \mapsto \diag\left((\alpha + \beta)(t), \alpha(t) \right). 
\end{equation}
The group $Q_1$ is usually called the three-step parabolic of $G_2$, as $U_1$ admits a three-step filtration $U_1 = U_1(1) \supset U_1(2) \supset U_1(3) \supset U_1(4) = 1$ given in \cite[Section 3.1]{BS}. Similarly, let $Q_2$ be the parabolic corresponding to the root $\alpha$. Consider its Levi decomposition $Q_2 = L_2 U_2$. We have an isomorphism $L_2 \simeq \GL_2$ under the map determined by 
\begin{equation}\label{isloL2GL2}
t \mapsto \diag\left((2\alpha + \beta)(t), (\alpha + \beta)(t) \right). 
\end{equation}
The group $Q_2$ is usually called Heisenberg parabolic of $G_2$. For $i \in \{ 1,2\}$, We denote by $\delta_{Q_i}$ the modular character of $Q_i$.

\subsection{The group $\PU_3 \rtimes \Gal(K/F)$}\label{subsection: the group PU3 times gal(K/F)} Let $K$ be a quadratic extension of $F$, and let $\U_3$ be the unitary group defined in the previous section, which we identify with matrices in $\GL_3(K)$ fixing the Hermitian form determined by 
\[
\Phi_3 = e = \begin{pmatrix}
0 & 0 & -1 \\
0 & -1 & 0 \\
-1 & 0 & 0 
\end{pmatrix}.
\]
Let $\PU_3$ be the quotient of $\U_3$ by its center. Equivalently, the group $\PU_3$ is the group of $F$-points of the group scheme $\underline{\U}_3/\underline{\U}_1$. Denote by $B'$ the standard Borel subgroup of $\PU_3$ consisting of upper triangular matrices with Levi decomposition $B'=T'U'$, where $T'$ is the standard maximal torus of $\PU_3$. We have \[T' \simeq K^\times\] via the map
\[
\diag(a, b, c) \mapsto \frac{a}{b}.
\]
Note that the Galois group $\Gal(K/F)$ acts on $\PU_3$ by acting on its coefficients. We can then consider the semidirect product $\PU_3 \rtimes \Gal(K/F)$ and the subgroups $B' \rtimes \Gal(K/F)$ and $T' \rtimes \Gal(K/F)$.

\subsection{The dual pair $(G_2, \PU_3 \rtimes \Gal(K/F))$ and exceptional correspondence}\label{subsection: dual pair}
We closely follow \cite[\S1]{BS}. Let $\bb{O}$ be an octonion algebra over $F$ and let $J = J_3(K)$ be the set of $3\times 3$ Hermitian matrices with coefficients in $K$. We can define a structure of Jordan algebra over $F$ on $J$ as follows. Addition on $J$ is given by addition of matrices and regarding multiplication, let $e$ be as above and define $x \circ y = \frac{1}{2}(x e^{\#} y + y e^{\#} x)$, where the superindex $\#$ denotes taking the adjoint, so that $xx^{\#}=\det(x)$. Note that from the expression of $e$ given above, we have $e^\# = e$. The algebra $J$ is equipped with an anti-involution given by the action of the nontrivial element of $\Gal(K/F)$ on the matrix entries. Let $$G := \mathrm{Aut}(\bb{O}),$$ which is a split exceptional group of type $G_2$. On the other hand, let $$G' := \mathrm{Aut}(J) \simeq \PU_3 \rtimes \Gal(K/F),$$ where $\Gal(K/F)$ acts on $\PU_3$ by acting on the coefficients. Consider the Lie algebras $\mathfrak{g} = \mathrm{Lie}(G)$ and $\mathfrak{g'} = \mathrm{Lie}(G')$. Then, 
\[
\mathfrak{h} := \mathfrak g \oplus \mathfrak g' \oplus \mathbb{O}^\circ \oplus J^\circ,
\]
 has a structure of a simple exceptional Lie algebra over $F$, where the superscript $\circ$ denotes the trace zero elements. Let $H = \mathrm{Aut}(\mathfrak h)$, which is disconnected quasi-split of absolute type $E_6$. By definition of $H$, we have an inclusion 
\[
G \times G' \subset H
\]
and they form a dual reductive pair. Let $\Pi$ be the minimal representation of $H$ (see \cite[\S 1.5]{BS} for the definition of minimal representation and \cite{GanSavinMR} for its construction). We also use $\Pi$ to denote the restriction of this representation to $G \times G'$, or the restriction to $G \times \PU_3$. This representation induces correspondences between certain representations of $G'$ and $G$, and certain representations of $\PU_3$ and $G$.

\begin{definition}\label{definition theta pu3}
	Let $\tilde{\sigma}$ be a smooth irreducible representation of $G'$. Define $\Theta(\tilde{\sigma})$ to be the smooth representation of $G$ such that  $\Theta(\tilde{\sigma}) \boxtimes \tilde{\sigma}$ is the maximal $\tilde{\sigma}$-isotypic quotient of $\Pi$, viewed as a representation of $G \times G'$. Similarly, let $\sigma$ be a smooth irreducible representation of $\PU_3$. Define $\Theta_{\PU_3}(\sigma)$ to be the smooth representation of $G$ such that  $\Theta_{\PU_3}(\sigma) \boxtimes \sigma$	is the maximal $\sigma$-isotypic quotient of $\Pi$, viewed as a representation of $G \times \PU_3$.  
\end{definition}

\begin{remark}\label{remark: extensions of representations}
Let $\sigma$ be a smooth irreducible representation of $\PU_3$. Then, it can be verified that 
\begin{equation}\label{eq:comparison between theta lift from PU3 and G'}
\Theta_{\PU_3}(\sigma) = \Theta\left(\Ind_{\PU_3}^{\PU_3 \rtimes \Gal(K/F)}(\sigma)\right),
\end{equation}
where the right-hand side is interpreted as we now explain.         

Let $\sigma^c$ be the representation obtained by twisting $\sigma$ by the action of the nontrivial element of $\Gal(K/F)$. We then have the following two cases: 
\begin{enumerate}
    \item Suppose $\sigma \not\simeq \sigma^c$. Then, $\Ind_{\PU_3}^{\PU_3 \rtimes \Gal(K/F)}(\sigma)$ is irreducible and its restriction to $\PU_3$ is isomorphic to $\sigma \oplus \sigma^c$. In this case, the right-hand side of \eqref{eq:comparison between theta lift from PU3 and G'} is the theta lift defined in Definition~\ref{definition theta pu3}, and we have 
    \[
    \Theta_{\PU_3}(\sigma) = \Theta_{\PU_3}(\sigma^c) = \Theta\left(\Ind_{\PU_3}^{\PU_3\rtimes \Gal(K/F)}(\sigma)\right).
    \]
    \item Suppose that $\sigma \simeq \sigma^c$. Then, $\Ind_{\PU_3}^{\PU_3 \rtimes \Gal(K/F)}(\sigma) \simeq \sigma_1 \oplus \sigma_2$ as representations of the group $\PU_3 \rtimes \Gal(K/F)$, where $\sigma_1, \sigma_2$ are two non-isomorphic irreducible representations of $\PU_3\rtimes \Gal(K/F)$ whose restriction to $\PU_3$ is isomorphic to $\sigma$. In this case, the right-hand side of \eqref{eq:comparison between theta lift from PU3 and G'} is defined as $\Theta(\sigma_1)\oplus\Theta(\sigma_2)$, and we have 
    \[
    \Theta_{\PU_3}(\sigma) = \Theta(\sigma_1) \oplus \Theta(\sigma_2).
    \]
\end{enumerate}
\end{remark}

To define a notion of small theta lift of representations of $\PU_3$ and $G'$, we need the following result from \cite{BS}. 

\begin{theorem}
	Let $\tilde{\sigma}$ be an irreducible representation of $G'$. Then, $\Theta(\tilde{\sigma})$ is a representation of finite length of $G$. Similarly, if $\sigma$ is an irreducible representation of $\PU_3$, we have that $\Theta_{\PU_3}(\sigma)$ is a representation of finite length.
\end{theorem}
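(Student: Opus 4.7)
The plan is to establish finite length of $\Theta(\tilde\sigma)$ by showing that it is simultaneously finitely generated and admissible as a smooth representation of $G$; since every finitely generated admissible smooth representation of a reductive $p$-adic group has finite length, this will suffice. The finite length of $\Theta_{\PU_3}(\sigma)$ will then follow from \eqref{eq:comparison between theta lift from PU3 and G'} and Remark~\ref{remark: extensions of representations}, which together express it either as $\Theta(\tilde\sigma)$ for a single irreducible $\tilde\sigma$ of $G'$, or as a direct sum of two such lifts.

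\emph{Finite generation.} The first step is to invoke that the minimal representation $\Pi$ of $H$ is finitely generated as an $H$-module, hence also as a $(G\times G')$-module. Since $\Theta(\tilde\sigma) \boxtimes \tilde\sigma$ is a quotient of $\Pi$, it is finitely generated over $G \times G'$. Because $\tilde\sigma$ is irreducible and admissible, this forces $\Theta(\tilde\sigma)$ to be finitely generated as a $G$-module.

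\emph{Admissibility.} This is the technical heart of the argument. The strategy is to compute the Jacquet modules $r_{U_i}(\Pi)$ of the minimal representation along the unipotent radicals of the two maximal parabolics $Q_1$ and $Q_2$ of $G$; these computations are precisely those carried out in \cite{BS}. Their output is a short filtration of each $r_{U_i}(\Pi)$ whose successive quotients take the form $V_k \boxtimes W_k$, where $V_k$ is an explicit representation of the Levi $L_i$ and $W_k$ is a finite-length smooth representation of $G'$. Since the Jacquet functor is exact and passage to the maximal $\tilde\sigma$-isotypic quotient of a smooth $G'$-module is likewise exact in this setting, one obtains an induced filtration of $r_{U_i}(\Theta(\tilde\sigma))$ whose graded pieces are finite-dimensional multiples of the $V_k$'s, hence of finite length over $L_i$. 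Combined with the finite generation of $\Theta(\tilde\sigma)$ and the standard characterisation of admissibility of a finitely generated smooth representation in terms of the admissibility of its Jacquet modules along the unipotent radicals of a set of representatives of conjugacy classes of parabolics, this yields admissibility of $\Theta(\tilde\sigma)$.

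\emph{Main obstacle.} The principal technical difficulty is the explicit computation of the Jacquet modules $r_{U_i}(\Pi)$ and the identification of their graded pieces as tensor products of well-controlled representations of $L_i$ and of $G'$. This is the genuinely hard part of the argument, but it is precisely what \cite{BS} establish; the proof therefore reduces to packaging their Jacquet module filtrations together with the standard passage from admissibility plus finite generation to finite length.
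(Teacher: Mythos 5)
Your argument is a from-scratch reconstruction (the paper itself only cites \cite{BS}: Theorem~4.1(iv), with Propositions~4.2--4.3 for the nontempered case and Propositions~4.5, 4.8, 4.15 for the tempered case, plus Remark~\ref{remark: extensions of representations} for the passage to $\PU_3$), and as written it has two genuine gaps. First, the finite-generation step rests on the claim that $\Pi$ is finitely generated over $H$ ``hence also'' over $G\times G'$. Finite generation does not descend to subgroups: $\Pi$ is irreducible, hence cyclic, as an $H$-module, but its restriction to the much smaller group $G\times G'$ is not finitely generated (compare $C_c^\infty(G)$, cyclic over $G\times G$ but not finitely generated over $G\times 1$). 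Consequently the quotient $\Theta(\tilde\sigma)\boxtimes\tilde\sigma$ cannot be declared finitely generated over $G\times G'$ on these grounds. The standard repair is to show that for a compact open subgroup $K'\subset G'$ and an irreducible $K'$-type $\rho$ with $\Hom_{K'}(\rho,\tilde\sigma)\neq 0$, the $\rho$-isotypic component $\Pi^{K',\rho}$ is finitely generated as a $G$-module; this is a nontrivial input about the minimal representation, not a formal consequence of its irreducibility.

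Second, the admissibility step invokes a ``standard characterisation'' of admissibility of a finitely generated representation purely in terms of admissibility of its Jacquet modules along proper parabolics. No such characterisation holds: a finitely generated smooth representation all of whose proper Jacquet modules vanish need not be admissible (e.g.\ a projective generator of a cuspidal Bernstein block, of the form $\mathrm{c}\mbox{-}\mathrm{Ind}$ from a compact-mod-center subgroup). One must separately control the cuspidal part of $\Theta(\tilde\sigma)$ --- i.e.\ bound the supercuspidal quotients and their multiplicities --- and this is exactly the content of the tempered/supercuspidal analysis in \cite{BS} (Propositions~4.5, 4.8, 4.15) that your packaging elides. A smaller issue in the same step: passing to the maximal $\tilde\sigma$-isotypic quotient (equivalently, taking $(\,\cdot\,\otimes\tilde\sigma^\vee)_{G'}$) is only right exact, not exact; this is harmless for bounding lengths of the graded pieces of $r_{U_i}(\Theta(\tilde\sigma))$, but the word ``exact'' should be replaced by the usual right-exactness argument, and the finiteness of the resulting multiplicity spaces still has to be extracted from the explicit form of the $W_k$ in \cite{BS}. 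The reduction of $\Theta_{\PU_3}(\sigma)$ to $\Theta(\tilde\sigma)$ via Remark~\ref{remark: extensions of representations} is fine and matches the paper.
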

\begin{proof}
	The result for $\Theta(\tilde{\sigma})$ is given in Theorem~4.1~(iv) of \cite{BS}. More precisely, the statement for nontempered representations is given in \cite[Proposition~4.3]{BS} and it is proven in an analogous way as \cite[Proposition~4.2]{BS}. The statement for tempered representations follows from Proposition~4.5, Proposition~4.8 and Proposition~4.15 of \cite{BS}. Finally, the result for $\Theta_{\PU_3}(\sigma)$ can be deduced from there using the relation between theta lifts from $G'$ to $G$ and theta lifts from $\PU_3$ to $G$ described in Remark~\ref{remark: extensions of representations}.
\end{proof}


\begin{definition}\label{definition small theta pu3}
	Let $\tilde{\sigma}$ be a smooth irreducible representation of $G'$. Define the small theta lift of $\tilde{\sigma}$, denoted by $\theta(\tilde{\sigma})$, to be the maximal semisimple quotient of $\Theta(\tilde{\sigma})$. We similarly define the small theta lift of a smooth irreducible representation $\sigma$ of $\PU_3$, and denote it by $\theta_{\PU_3}(\sigma)$.
\end{definition}

\subsection{Results of Baki\'c--Savin on the nontempered correspondence} \label{subsection: results bakic savin}

We now explain a result of \cite{BS} which determines the exceptional theta lift of nontempered representations. It is described in terms of the following correspondence studied by Roberts in \cite{Roberts}. Let $\tilde{\omega}$ be the representation of $L_1 \times (T' \rtimes \Gal(K/F))$ given (with this same notation) in \cite[Section 3.1]{BS}. We recall that $L_1\simeq \GL_2$ and $T'\simeq K^\times$. The representation $\tilde{\omega}$ defines a big theta lift from $T' \rtimes \Gal(K/F)$ to $L_1$ given as follows.

\begin{lemma}\label{lemma: theta lift induced by tilde omega}
	Let $\tilde{\omega}$ be the representation of $L_1 \times \left(T' \rtimes \Gal(K/F) \right)$ introduced above and consider the lift it induces from $T' \rtimes \Gal(K/F)$ to $L_1$. Let $\chi$ be a character of $K^\times$. Then, 
	\begin{enumerate}
		\item If $\chi = \chi^c$, write $\Ind_{K^\times}^{K^\times \rtimes \Gal(K/F)} = \chi_1 \oplus \chi_2$ as $K^\times \rtimes \Gal(K/F)$ representations. Then, exactly one of these two characters has a nonzero big theta lift by the correspondence induced by $\tilde{\omega}$, which is equal to the automorphic induction of $\chi$ to $\GL_2(F)$, given by the representation $\tau$ introduced in Proposition~\ref{prop: automorphic induction of chi when chic = chi}.
		\item If $\chi \neq \chi^c$, then $\Ind_{K^\times}^{K^\times \rtimes \Gal(K/F)}(\chi)$ is irreducible and its big theta lift by the correspondence induced by $\tilde{\omega}$ is equal to the automorphic induction of $\chi$ to $\GL_2(F)$. 
	\end{enumerate}
\end{lemma}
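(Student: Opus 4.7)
The plan is to identify the correspondence induced by $\tilde{\omega}$ with a classical theta correspondence, namely the one for the dual pair $(\mathrm{GO}_2^-, \GL_2)$, where $\mathrm{GO}_2^-$ denotes the orthogonal similitude group attached to the norm form of the extension $K/F$. Under this identification, the $F$-points of the underlying orthogonal group $\mathrm{O}_2^-$ are canonically isomorphic to $K^\times \rtimes \Gal(K/F)$, with $K^1 \subset K^\times$ acting by multiplication and $\Gal(K/F)$ acting by conjugation. The first step is to unpack the explicit description of $\tilde{\omega}$ from \cite[\S3.1]{BS} (including its restriction to $T'\rtimes\Gal(K/F) \subset K^\times\rtimes\Gal(K/F)$) and match it with the standard formulas for the Weil representation of this orthogonal-symplectic dual pair, keeping careful track of the additive character and the splitting characters that appear.

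Once this identification is in place, I would invoke Roberts' theorem on the theta correspondence between $\mathrm{GO}_2^-$ and $\GL_2$ (see \cite{Roberts}), which asserts that this correspondence is realized by automorphic induction. In the case $\chi \neq \chi^c$, the induced representation $\Ind_{K^\times}^{K^\times \rtimes \Gal(K/F)}(\chi)$ is irreducible, and Roberts' result directly gives that its theta lift to $\GL_2$ is the automorphic induction of $\chi$, which is precisely the representation $\tau$.

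The case $\chi = \chi^c$ needs a bit more care. In that situation $\Ind_{K^\times}^{K^\times \rtimes \Gal(K/F)}(\chi)$ decomposes as the sum $\chi_1 \oplus \chi_2$ of two one-dimensional extensions of $\chi$, distinguished by the sign by which the nontrivial element of $\Gal(K/F)$ acts. When $\chi^c = \chi$, the induced representation $\tau$ of $\GL_2$ is reducible (it is principal series, induced from the extensions of $\chi$ to $B'\rtimes\Gal(K/F)$ under the identification discussed above); by Roberts' dichotomy (or equivalently by the conservation relation for $(\mathrm{GO}_2^-, \mathrm{GO}_2^+)$), exactly one of $\chi_1, \chi_2$ has nonzero theta lift to $\GL_2$ through $\tilde{\omega}$, and this nonzero lift must agree with the constituent of $\tau$ appearing in the principal series, which by our conventions we define to be $\tau$ itself.

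The main obstacle will be the bookkeeping needed to verify that the representation $\tilde{\omega}$ constructed in \cite{BS} matches, on the nose, the classical Weil representation used in Roberts' setup, including all normalizations and splittings. Concretely, this amounts to identifying the embedding $K^\times \rtimes \Gal(K/F) \hookrightarrow L_1 \simeq \GL_2$ implicit in \cite[\S3.1]{BS} with the natural embedding $\mathrm{O}_2^- \hookrightarrow \GL_2$ coming from viewing $K$ as an $F$-vector space equipped with its norm form, and checking that the Kudla lifting characters are compatible. Once this is settled, the two statements of the lemma follow immediately from \cite{Roberts}.
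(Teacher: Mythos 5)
Your overall strategy is the same as the paper's: the paper disposes of this lemma by citing \cite[\S 3.4]{BS}, and the surrounding text already identifies the correspondence induced by $\tilde{\omega}$ as the one studied by Roberts, i.e.\ the theta correspondence for $(\mathrm{GO}(K,\norm_{K/F}),\GL_2)$ realized by automorphic induction. So reconstructing the statement via Roberts, after matching $\tilde{\omega}$ with the classical Weil representation, is exactly the intended route, and your treatment of the case $\chi\neq\chi^c$ is fine.

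However, your argument in the case $\chi=\chi^c$ contains a genuine error. You assert that when $\chi=\chi^c$ the automorphic induction $\tau$ is reducible and that the dichotomy is realized by matching each of $\chi_1,\chi_2$ with a constituent of $\tau$. This is false: writing $\chi=\mu\circ\norm_{K/F}$, the automorphic induction is the principal series $i_P^{\GL_2}(\mu,\mu\,\omega_{K/F})$, which is \emph{irreducible} because $\omega_{K/F}\neq\lvert\,\cdot\,\rvert^{\pm1}$ — this is exactly what Proposition~\ref{prop: automorphic induction of chi when chic = chi} establishes (there $\mu\,\omega_{K/F}=\mu^{-1}$ since $\chi$ is conjugate-symplectic, and $\mu\neq\mu^{-1}$). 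The correct mechanism for the dichotomy is not a splitting of $\tau$ but a property of the Weil representation for a two-dimensional quadratic space: of the two extensions of a character of $\mathrm{SO}_2\simeq K^1$ (equivalently, of the two summands $\chi_1,\chi_2$ of $\Ind_{K^\times}^{K^\times\rtimes\Gal(K/F)}\chi$), only one occurs as a quotient of $\tilde{\omega}$ at all — the other has \emph{zero} big theta lift — and the one that does occur lifts to the full irreducible representation $\tau$. As written, your final step would not compile into a proof; you need to replace the "match constituents" reasoning with Roberts' non-occurrence statement (or the conservation relation) for the extension that fails to lift.
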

\begin{proof}
    This result is given in Section 3.4 of \cite{BS}. 
    
\end{proof}

Let $\chi$ be a unitary character of $K^{\times}$ and $s>0$. Consider the character  $\chi \lvert \Norm_{K/F}(\ ) \rvert^s$ of $K^\times \simeq T'$. We may inflate this character to $B'$ with trivial action on $U'$. First set
\begin{align*}
I(\chi,s)\coloneqq i_{B'}^{\PU_3}(\chi \lvert \Norm_{K/F}(\ ) \rvert^s )
\end{align*}
to denote a principal series representation of $\PU_3$. When $\chi \neq \chi^c$, then $\chi_1 \coloneqq \Ind_{K^\times}^{K^\times \rtimes \Gal(K/F)}\chi$ is an irreducible representation of $T' \rtimes \Gal(K/F)$. Now $\chi_1|N_{K/F}(\ )|^s$ is once again inflated to $B' \rtimes \Gal(K/F)$. Set
\begin{equation}\label{induction of chi to G'}
I(\chi_1,s) \coloneqq i_{B' \rtimes \Gal(K/F)}^{G'}\left(\chi_1|N_{K/F}(\ )|^s\right).
\end{equation}
When $\chi=\chi^c$, then
\[
\Ind_{K^\times}^{K^\times \rtimes \Gal(K/F)}\chi = \chi_1 \oplus \chi_2,
\]
where we denote by $\chi_1$ the character with a nontrivial theta lift to $L_1$ via the theta correspondence induced by $\tilde{\omega}$ (see Lemma~\ref{lemma: theta lift induced by tilde omega}). Note that the character $\chi_2$ has then a trivial theta lift via the correspondence induced by $\tilde{\omega}$. We define representations $I(\chi_1,s)$ and $I(\chi_2, s)$ of $G'$ using the same formula as in \eqref{induction of chi to G'}.

\begin{proposition}\label{prop: exceptional theta lift non tempered representation}
	Let $\chi$ be a unitary character of $K^\times$ and let $s > 0$ be a positive real number. 
	\begin{enumerate}
		\item Let $\sigma_1$ be the unique irreducible quotient of $I(\chi_1,s)$ and denote by $\lvert  \det \rvert^{s}\tau$ the representation of $L_1$ obtained as the big theta lift of the character $\chi_1 \lvert \Norm_{K/F}(\ )\rvert^s$ of $T' \rtimes \Gal(K/F)$ via the correspondence given by $\tilde{\omega}$. Then $\Theta(\sigma_1)$ is a nonzero quotient of $i_{Q_1}^G(\lvert  \det \rvert^{s}\tau)$.
		\item If $\chi$ is $\Gal(K/F)$-invariant, let $\sigma_2$ be the unique irreducible quotient of $I(\chi_2, s)$. Then, $\Theta(\sigma_2) = 0$. 
		\item Let $\sigma$ be the unique irreducible quotient of $I(\chi,s)$. Then $\Theta_{\PU_3}(\sigma) = \Theta(\sigma_1)$ is a nonzero quotient of $i_{Q_1}^G(\lvert  \det \rvert^{s}\tau)$.
	\end{enumerate}	
\end{proposition}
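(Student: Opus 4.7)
The plan is to reduce part~(3) to parts~(1) and~(2) using Remark~\ref{remark: extensions of representations}, and to prove (1) and~(2) via a Jacquet-module analysis of the minimal representation $\Pi$ along the parabolics $Q_1 \subset G$ and $B' \rtimes \Gal(K/F) \subset G'$, using the Roberts-type correspondence encoded by $\tilde{\omega}$ as the bridge between the two sides. The central ingredient is the Jacquet-module computation carried out in \cite[\S3]{BS}, which produces a filtration of $\Pi_{U_1}$ whose top layer, after further restriction along $U'$ on the $G'$-side, recovers $\tilde{\omega}$ as a representation of $L_1 \times (T' \rtimes \Gal(K/F))$, up to modular twists.

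For part~(1), I would first apply Bernstein's form of Frobenius reciprocity on the $G$-side to translate the claim that $\Theta(\sigma_1)$ is a nonzero quotient of $i_{Q_1}^G(\lvert \det \rvert^s \tau)$ into the existence of a nonzero $L_1 \times G'$-equivariant map from $\Pi_{U_1}$ (with appropriate modular normalisation) to $\lvert \det \rvert^s \tau \boxtimes \sigma_1$. Since $\sigma_1$ is the unique irreducible quotient of $I(\chi_1,s)$, a second Frobenius step on the $G'$-side reduces this to producing a nonzero $L_1 \times (T' \rtimes \Gal(K/F))$-map from $(\Pi_{U_1})_{U'}$ to $\lvert \det \rvert^s \tau \boxtimes \chi_1 \lvert \Norm_{K/F}(\ )\rvert^s$. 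The structural result of \cite[\S3]{BS} exhibits $\tilde{\omega}$ as the relevant quotient of this iterated Jacquet module, and Lemma~\ref{lemma: theta lift induced by tilde omega} identifies the $\tilde{\omega}$-lift of $\chi_1 \lvert \Norm_{K/F}(\ )\rvert^s$ as precisely $\lvert \det \rvert^s \tau$, providing the desired nonzero map.

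For part~(2), the same reduction shows that any Langlands quotient of $\Theta(\sigma_2)$ coming from $Q_1$ would require a nonzero $\tilde{\omega}$-lift of $\chi_2 \lvert \Norm_{K/F}(\ )\rvert^s$, which vanishes by the choice of $\chi_2$. To upgrade this to $\Theta(\sigma_2)=0$, rather than a mere vanishing of $Q_1$-quotients, I would combine the above with the structural results on nontempered theta lifts in \cite[Propositions~4.3, 4.5]{BS}: since $\sigma_2$ is nontempered (being the Langlands quotient of a standard module with $s>0$), every irreducible constituent of $\Theta(\sigma_2)$ must be a Langlands quotient from $Q_1$ or $Q_2$, and an analogous Jacquet-module computation along $Q_2$ rules that option out. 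Part~(3) then follows case by case: when $\chi \neq \chi^c$, Remark~\ref{remark: extensions of representations}(1) gives $\Theta_{\PU_3}(\sigma) = \Theta(\sigma_1)$ directly from~(1), while when $\chi = \chi^c$, Remark~\ref{remark: extensions of representations}(2) gives $\Theta_{\PU_3}(\sigma) = \Theta(\sigma_1) \oplus \Theta(\sigma_2) = \Theta(\sigma_1)$ by combining~(1) and~(2). The main obstacle is the Jacquet-module analysis of $\Pi$ along the iterated unipotent $U_1 \cdot U'$, namely isolating $\tilde{\omega}$ as the relevant layer and controlling the remaining layers well enough to force vanishing in~(2); fortunately, the requisite geometric work has essentially been carried out in \cite[\S3]{BS}, so the proof should amount largely to organising those results.
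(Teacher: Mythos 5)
Your overall strategy coincides with the paper's: part (3) is deduced from parts (1)--(2) together with Remark~\ref{remark: extensions of representations} exactly as in the paper (including the case distinction on whether $\Ind_{\PU_3}^{G'}\sigma$ equals $\sigma_1$ or $\sigma_1\oplus\sigma_2$), and for parts (1)--(2) the paper simply cites \cite[Propositions~4.2 and~4.3]{BS}, whose proofs are in essence the Jacquet-module argument you sketch. So there is no divergence of method, only a difference in how much of the cited material you choose to re-derive.

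The one place where your reconstruction goes astray is in part (2), in the passage from ``$\Theta(\sigma_2)$ has no Langlands-quotient-from-$Q_1$ quotients'' to ``$\Theta(\sigma_2)=0$''. Your proposed patch --- that every irreducible constituent of $\Theta(\sigma_2)$ must be a Langlands quotient attached to $Q_1$ or $Q_2$ --- is neither justified nor true as stated: a nontempered irreducible representation of $G_2$ can be a Langlands quotient attached to the Borel, and in any case what must be killed are irreducible \emph{quotients}, not constituents. The argument of \cite[Proposition~4.3]{BS} avoids this detour entirely: writing $\Theta(\sigma_2)\cong(\Pi\otimes\sigma_2^\vee)_{G'}$ and using that $\sigma_2^\vee$ is a quotient of a full principal series of $G'$, right-exactness of coinvariants exhibits the \emph{entire} representation $\Theta(\sigma_2)$ --- not merely its $Q_1$-type quotients --- as a quotient of $i_{Q_1}^G$ applied to the $\tilde{\omega}$-lift of $\chi_2\lvert\Norm_{K/F}(\ )\rvert^s$, which vanishes by Lemma~\ref{lemma: theta lift induced by tilde omega}; hence $\Theta(\sigma_2)=0$ with no auxiliary $Q_2$-analysis. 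The same mechanism gives part (1) in one stroke, so your two Frobenius-reciprocity steps should be organised in that order: first tensor against the principal series on the $G'$-side and pass to $T'\rtimes\Gal(K/F)$-coinvariants, then read off the $i_{Q_1}^G$-structure from the filtration of the resulting space.
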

\begin{proof}
	The proof that $\Theta(\sigma_1) \neq 0$ is given in \cite[Proposition 4.2]{BS}. The fact that $\Theta(\sigma_1)$ is a quotient of $i_{Q_1}^G(\tau)$ and that $\Theta(\sigma_2) = 0$ is given in \cite[Proposition 4.3]{BS}. The third point can be proved from the first two points and Remark~\ref{remark: extensions of representations} once we make the following observation. If $\chi$ is $\Gal(K/F)$-invariant, then $\Ind_{\PU_3}^{\PU_3 \rtimes \Gal(K/F)}(\sigma) = \sigma_1 \oplus \sigma_2$. On the other hand, if $\chi$ is not $\Gal(K/F)$-invariant, $\Ind_{\PU_3}^{\PU_3 \rtimes \Gal(K/F)}(\sigma) = \sigma_1$. Alternatively, the third point can be justified in an analogous way as the proof of the first point, but working with the restriction of $\Pi$ to $G \times \PU_3$ and using the following fact. The big theta lift of $\chi \lvert \Norm_{K/F}(\ ) \rvert^s $ from $T'$ to $L_1$ induced by the representation $\tilde{\omega}$ is equal to the big theta lift of $\chi_1 \lvert \Norm_{K/F}(\ ) \rvert^s $ from $T'\rtimes \Gal(K/F)$ to $L_1$ induced by the representation $\tilde{\omega}$. 
\end{proof}

\begin{remark}\label{rmk: tau is the automorphic induction of chi}
    Consider the same notation as in Proposition~\ref{prop: exceptional theta lift non tempered representation}. In view of Lemma~\ref{lemma: theta lift induced by tilde omega}, $ \lvert  \det \rvert^{s}\tau$ can also be described as the automorphic induction of the character $\chi \lvert \Norm_{K/F}(\ )\rvert^s$ to $\GL_2(F)$. 
\end{remark}


\subsection{Theta correspondence for $\PGL_3 \times G_2$}\label{subsec: theta pgl3 to g2} Following \cite{endoscopiclifting}, we state the results we will need regarding the exceptional theta correspondence for $\PGL_3 \times G$. Let $H$ be (the $F$-points of) a split adjoint linear algebraic group of type $E_6$ over $F$. Denote by $\Pi$ the minimal representation of $H$ as in \cite[\S5]{endoscopiclifting}. There is a dual reductive pair $\PGL_3 \times G \xhookrightarrow{} H$ and we can consider the restriction of $\Pi$ to $\PGL_3 \times G$. Using the representation $\Pi$, define the big theta lift $\Theta(\pi)$ of an irreducible admissible representation $\pi$ of $\PGL_3$ in a similar way as above. The next proposition is a particular case of \cite[Corollary 9]{endoscopiclifting}. 

\begin{proposition}\label{prop:theta lift pgl3 g2 general}
	Let $\chi_1, \chi_2, \chi_3$ be characters of $F^\times$ satisfying $\chi_1 \chi_2 \chi_3 = 1$. For every $i$ from $1$ to $3$, write $\chi_i = \mu_i \lvert \ \rvert_F^{s_i}$, where $\mu_i$ is unitary and suppose that the characters are ordered so that $s_1 \geq s_2 \geq s_3$. Consider the representation of $L_2\simeq \GL_2$ given by $\tau = \pi(\chi_3^{-1}, \chi_2^{-1})$. Then, $\Theta(\pi(\chi_1, \chi_2, \chi_3))$ is a nonzero quotient of $i_{Q_2}^G(\tau)$.
\end{proposition}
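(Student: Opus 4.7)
The plan is to deduce the proposition directly from Corollary~9 of \cite{endoscopiclifting}, which computes the exceptional theta lift of an irreducible principal series quotient of $\PGL_3$ in terms of a Heisenberg parabolic induction on $G$; the content of the proof is therefore to match conventions and verify that, under the ordering $s_1 \geq s_2 \geq s_3$, the $\GL_2$-inducing datum produced by that corollary is precisely $\tau = \pi(\chi_3^{-1}, \chi_2^{-1})$.

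First I would observe that the hypothesis $s_1 \geq s_2 \geq s_3$ puts the exponents of $\chi_1, \chi_2, \chi_3$ in ``does not precede'' order, so $\pi(\chi_1, \chi_2, \chi_3)$ is realized as the unique irreducible quotient of the principal series $i_{B'}^{\PGL_3}(\chi_1 \otimes \chi_2 \otimes \chi_3)$, where $B' = T'U'$ denotes the standard Borel of $\PGL_3$. By right-exactness of the big theta lift on quotients, $\Theta(\pi(\chi_1,\chi_2,\chi_3))$ is a quotient of $\Theta\bigl(i_{B'}^{\PGL_3}(\chi_1 \otimes \chi_2 \otimes \chi_3)\bigr)$, so it suffices to produce a nonzero $G \times \PGL_3$-equivariant map
\[
\Pi \longrightarrow i_{Q_2}^{G}(\tau) \boxtimes i_{B'}^{\PGL_3}(\chi_1 \otimes \chi_2 \otimes \chi_3).
\]

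Next, I would invoke the key structural input of \cite{endoscopiclifting}, namely the computation of the Jacquet module of the minimal representation $\Pi$ of $H$ along the Heisenberg unipotent $U_2$ of $G$, viewed as a representation of $L_2 \times \PGL_3$. Applying Frobenius reciprocity on both sides of the dual pair $\PGL_3 \times G$ (first along $U_2$ on the $G$-side, then along $U'$ on the $\PGL_3$-side) reduces the existence of the displayed map to the existence of an explicit $L_2 \times T'$-equivariant quotient of the bi-Jacquet module $r_{U_2 \times U'}(\Pi)$. The needed quotient is the one exhibited in Corollary~9 of \emph{loc.\,cit.}, which directly yields the nonvanishing and identifies the inducing data.

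The last, and most delicate, step is normalization. I expect the main obstacle to be carefully tracking three twists: the modulus character $\delta_{Q_2}^{1/2}$ that appears in Frobenius reciprocity on the $G$-side, the analogous modulus on the Borel of $\PGL_3$, and the inversion $\chi_i \leadsto \chi_i^{-1}$ arising from the intrinsic symmetry of the minimal representation (which is ultimately responsible for the negative exponents in the statement). Once these are correctly accounted for, one finds that the $\GL_2$-principal series produced by Savin's computation is $i_{B_{\GL_2}}^{\GL_2}(\chi_3^{-1} \otimes \chi_2^{-1})$, whose Langlands quotient is $\pi(\chi_3^{-1}, \chi_2^{-1}) = \tau$ because the ordering $s_1 \geq s_2 \geq s_3$ forces $-s_3 \geq -s_2$, so that $(\chi_3^{-1}, \chi_2^{-1})$ is in ``does not precede'' order. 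With this identification in hand, the proposition is immediate.
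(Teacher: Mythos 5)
Your proposal matches the paper's treatment: the paper gives no proof of this proposition beyond noting that it is a particular case of Corollary~9 of \cite{endoscopiclifting}, which is exactly the result you reduce to (your additional sketch of Jacquet modules of the minimal representation and the normalization bookkeeping is essentially an outline of how that corollary itself is established). Your identification of the inducing datum $\tau=\pi(\chi_3^{-1},\chi_2^{-1})$ and the ordering check $-s_3\geq -s_2$ are consistent with how the paper later applies the proposition.
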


\section{Local dihedral long root A-packets for $G_2$}\label{sec: A-packets}

In this section, we propose a construction of dihedral long root (local) A-packets for the group $G_2$, following Arthur's conjectures. In Section~\ref{subsection: dihedral long root A par}, we describe the corresponding A-parameter of $G_2$ in the global setting. This parameter factors through an A-parameter of $\prescript{L}{}{\PU_3}$. Thus, in Section~\ref{subsec: howe ps a packets}, we discuss Arthur's conjectures for  $\prescript{L}{}{\PU_3}$ in the local nonarchimedean setting, followed by an explicit construction of the corresponding local A-packets, called Howe--PS packets. We use the results on the theta correspondence for unitary groups outlined in Section~\ref{section: Theta correspondence for unitary groups} for this process. This construction was first given by Gelbart and Rogawski in \cite{GR} and it gives a unitary version of the original construction of Howe-PS packets, which was done in the orthogonal-symplectic setting. In Section~\ref{subsection: from pu3 to pu3 z/2z}, we resolve some details regarding the lift of representations from $\PU_3$ to $\PU_3 \rtimes \bb{Z}/2\bb{Z}$. Next, in Section~\ref{subsection: local a packets g2}, we describe Arthur's conjectures for the dihedral long root A-parameters of $G_2$ in the local nonarchimedean setting and propose a construction of the corresponding A-packets. We use the results from Section~\ref{section: Theta correspondence PU3 times G2} for this process.

Throughout this section $F$ denotes a number field and $L_F$ denotes the corresponding conjectural Langlands group. We denote by $G$ the $F$-points of a split group of type $G_2$ over $F$. 

\subsection{Dihedral long root A-parameters}\label{subsection: dihedral long root A par}
Let $K/F$ be a quadratic field extension with Galois group $\Gal(K/F)=\langle c \rangle$. Let $\chi$ be a character of $\bb{A}_K^\times/K^\times$, which we can also regard as a character of the Weil group $W_K$. Let $\tau$ denote the automorphic representation of $\GL_2$ obtained from $\chi$ by automorphic induction, \textit{i.e.}, the automorphic representation with $L$-parameter
\[
\rho_\tau = \Ind_{W_K}^{W_F} \chi.
\]
The representation $\tau$ is called dihedral with respect to the quadratic field extension $K/F$. Its central character is given by $\omega_{K/F}\cdot\chi_{\vert \bb{A}_F^\times}$. Assume that the character $\chi$ is conjugate-symplectic, \textit{i.e.},
\[
\chi_{\vert \bb{A}_F^\times} = \omega_{K/F}.
\]
Under this assumption, the representation $\tau$ has trivial central character and can therefore be regarded as a representation of $\PGL_2$. Alternatively, this assumption implies that $\det(\rho_\tau)=1$, and therefore $\rho_\tau$ defines an $L$-parameter of $\PGL_2$. We will also assume that $\chi\neq\chi^c$, so that $\rho_\tau$ is irreducible and $\tau$ is a cuspidal representation.

The root system of $G$ contains pairs of orthogonal roots, always consisting of a short and a long root. A choice of such a pair determines a commuting pair of $\SL_2$ subgroups inside $G$,
\[
(\SL_{2,s}\times \SL_{2,l})/\mu_2\subset G,
\]
where $\mu_2=\lbrace \pm 1\rbrace$ is embedded diagonally inside $\SL_{2,s}\times \SL_{2,l}$ in the natural way. Here $\SL_{2,s}$ corresponds to the short root and $\SL_{2,l}$ corresponds to the long root. Moreover, the groups $\SL_{2,s}$ and $\SL_{2,l}$ are centralizers of each other inside $G$. Note that the Langlands dual group of $G$ is $G(\bb{C})$ (we abuse notation and denote by $G(\bb{C})$ the $\bb{C}$-points of our fixed split group of type $G_2$ over $F$).

\begin{definition} \label{global A par G2}
	The long root A-parameter of $G$ associated with $\tau$ is given by
	\[
	\psi_{\tau, l}: L_F \times \SL_2(\bb{C}) \twoheadrightarrow{} W_F \times \SL_2(\bb{C}) \xrightarrow{\rho_\tau \times \id} \SL_{2,s}(\bb{C}) \times_{\mu_2} \SL_{2,l}(\bb{C}) \subset G(\bb{C}).
	\]
\end{definition}

There is a subgroup $\SL_{3,l}(\bb{C})\subset G_2(\bb{C})$ corresponding to the root system of type $A_2$ formed by the six long roots of $G$. Let $T_s(\bb{C})$ denote the diagonal torus of $\SL_{2,s}(\bb{C})$. Then $T_s(\bb{C})\times_{\mu_2}\SL_{2,l}(\bb{C})\subset \SL_{3,l}(\bb{C})$. The group $\SL_{3,l}(\bb{C})$ has index two in its normalizer $N_{G(\bb{C})}(\SL_{3,l}(\bb{C}))$ inside $G(\bb{C})$. Let $w$ denote the standard Weyl element of $\SL_2$. Then $(w,w)\in\SL_{2,s}(\bb{C})\times_{\mu_2}\SL_{2,l}(\bb{C})$ belongs to the non-identity component of $N_{G(\bb{C})}(\SL_{3,l}(\bb{C}))$. Therefore,
\[
N_{\SL_{2,s}(\bb{C})}(T_s(\bb{C}))\times_{\mu_2}\SL_{2,l}(\bb{C}) \subset N_{G(\bb{C})}(\SL_{3,l}(\bb{C}))=\SL_{3,l}(\bb{C})\rtimes \bb{Z}/2\bb{Z}\subset G(\bb{C}).
\]
Note that $^L \PU_3 = \SL_{3,l}(\bb{C}) \rtimes \bb{Z}/2\bb{Z}$.  Hence, the parameter $\psi_{\tau, l}$ factors through an A-parameter of $\PU_3$.
\begin{definition}
	We denote by $\psi_{\chi}$ the A-parameter of $\PU_3$ given by
	\[
	\psi_{\chi}: L_F \times \SL_2(\bb{C}) \xrightarrow{\rho_\tau \times \id} N_{\SL_{2,s}(\bb{C})}(T_s(\bb{C}))\times_{\mu_2}\SL_{2,l}(\bb{C}) \subset \SL_{3,l}(\bb{C})\rtimes \bb{Z}/2\bb{Z} = ^L\!\PU_3.
	\]
\end{definition}
Restricted to $L_K\times \SL_2(\bb{C})$, the parameter $\psi_\chi$ yields a 3-dimensional representation given by
\[
\chi^{-2}\oplus \chi\otimes S_2,
\]
where $\chi$ is regarded as a character of $L_K$ via $L_K\twoheadrightarrow{}W_K$ and $S_2$ denotes the standard $2$-dimensional representation of $\SL_2(\bb{C})$. This information fully determines the equivalence class of $\psi_\chi$ given by $\SL_3(\mathbb{C})$-conjugacy.

\subsection{Howe--PS packets for $\PU_3$}\label{subsec: howe ps a packets} We construct the local Howe--PS packets for $\PU_3$ associated to the A-parameter $\psi_\chi$ at nonarchimedean places using the technique of theta lifting. Let $v$ be a nonarchimedean place of $F$ and $F_v$ the completion of $F$ at $v$. Denote by $L_{F_v}$ the group $W_{F_v} \times \SL_2(\bb{C})$. By fixing an embedding $L_{F_v} \xhookrightarrow{} L_F$, we can precompose the map $\psi_\chi$ with the map $L_{F_v} \times \SL_2(\bb{C}) \xhookrightarrow{} L_F \times \SL_2(\bb{C})$ (which is the identity on $\SL_2(\bb{C})$) to obtain the local A-parameter
\[
\psi_{\chi,v}: L_{F_v} \times \SL_2(\bb{C}) \to ^L\!\PU_3.
\] 
Following Arthur's conjectures, the elements of the local A-packet associated with $\psi$ and $v$ are indexed by irreducible representations of the so-called local component group 
\[
S_{\psi_{\chi,v}} = \pi_0\left(Z_{\SL_3(\bb{C})}(\mathrm{Im}(\psi_{\chi,v})) / Z(\SL_3(\bb{C}))  \right) = \begin{cases}
\bb{Z}/2\bb{Z} \ \text{ if } v \text{ is not split in } K,\\
1 \ \text{ if } v \text{ is split in } K,
\end{cases}
\]
where $Z_{\SL_3(\bb{C})}(\mathrm{Im}(\psi_{\chi,v}))$ denotes the centralizer in $\SL_3(\bb{C})\subset ^L\!\PU_3$ of the image of $\psi_{\chi,v}$, $Z(\SL_3(\bb{C}))$ denotes the center of $\SL_3(\bb{C})$, and $\pi_0$ denotes the group of connected components. Hence, the local A-packet of $\PU_3$ associated to $\psi_\chi$ and $v$ has the form
\[
A_{\psi_{\chi, v}} = \begin{cases}
\{\sigma_v^+, \sigma_v^-\}  \text{ if } v \text{ is not split in } K,\\
\{\sigma_v^+\} \text{ if } v \text{ is split in } K.
\end{cases}
\]
Here, we denote by $\sigma_v^+$ the representation corresponding to the trivial character of $S_{\psi_{\chi,v}} \simeq \bb{Z}/2\bb{Z}$, and in the case that $v$ does not split in $K$, we denote by $\sigma_v^{-}$ the representation of $\PU_3$ corresponding to the nontrivial character of $S_{\psi_{\chi,v}}$. Moreover, Arthur's conjectures predict that for all but finitely many (nonarchimedean) places, the representation $\sigma_v^{+}$ is the unramified representation whose Satake parameter is
\begin{equation}\label{eq: Satake parameter PU_3}
s_{\psi_{\chi,v}}=\psi_{\chi,v}\left(\Phi_v\times \begin{pmatrix} q_v^{-1/2} & 0 \\ 0 & q_v^{1/2} \end{pmatrix}\right), 
\end{equation}
where $\Phi_v$ denotes a geometric Frobenius at $v$ and $q_v$ denotes the cardinality of the corresponding residue field.
Fix a nontrivial additive character $\psi: \bb{A}_F/F\rightarrow \bb{C}^\times$. For each prime $v$ of $F$, we denote by $\psi_v$ the corresponding (nontrivial) additive character of $F_v$. To define the representations of $\PU_3$ appearing in $A_{\psi_{\chi,v}}$ for a nonarchimedian prime $v$ of $F$, we distinguish two cases, according to whether $v$ splits in $K$ or not.

\subsubsection{Nonsplit case}\label{nonsplit A-packet Pu3} Suppose that $v$ is not split in $K$. Denote by $K_v$ the completion of $K$ at the unique place above $v$. Fix a $3$-dimensional Hermitian space $V$ over $K_v$ and identify $\U(V)\simeq \U_3$. Let $V_1$ denote the $1$-dimensional Hermitian space in the Witt tower of $V$. Fix a trace-zero element $\delta \in K_v^\times$ and recall the definition of sign of a skew-Hermitian space given in Section~\ref{section: Theta correspondence for unitary groups} (which depends on the choice of $\delta$). Finally, let $W^+$ and $W^-$ be $1$-dimensional skew-Hermitian spaces over $K_v$ satisfying 
\begin{equation}\label{eq: formula for epsilon(v)epsilon(W)}
\epsilon(V_1)\epsilon(W^+) = \epsilon_K\left(\frac{1}{2}, \chi_v^3, \psi_v\left(\Tr_{K_v/F_v}(- \delta (\cdot))\right) \right) = -\epsilon(V_1)\epsilon(W^-).
\end{equation} 

Let $\gamma = \chi_v$, which is a conjugate symplectic character. Following Section~\ref{subsection: Theta correspondence for U1 times U3}, we use this data to define theta correspondences between the unitary groups defined above. In particular, and following the notation of Section~\ref{subsection: Theta correspondence for U1 times U3}, we can consider the representations $\Omega_{V, W^+, \gamma, \psi_v}$ of $\U(V) \times \U(W^+)$ , and $\Omega_{V, W^-, \gamma, \psi_v}$ of $\U(V)\times \U(W^-)$ and the corresponding big theta lifts $\Theta_{V, W^+, \gamma, \psi_v}$, $\Theta_{V, W^-, \gamma, \psi_v}$.

\begin{definition}\label{big theta lift howe ps}
	We define the following representations of $\U(V) = \U_3$.
	\begin{enumerate}
		\item Denote by $\bb{1}$ the trivial representation of $\U(W^+)$. Define $\sigma_v^+ = \Theta_{V, W^+, \gamma , \psi_v}(\bb{1})$. 
		\item Denote by $\bb{1}$ the trivial representation of $\U(W^-)$. Define $\sigma_v^{-} = \Theta_{V, W^-, \gamma, \psi_v}(\bb{1})$.
	\end{enumerate}
	By the choice of our lifting characters, the center of $\U(V)$ acts trivially on these representations. Hence, we also view $\sigma_v^+$ and $\sigma_v^-$ as representations of $\PU_3$.
\end{definition}

\begin{proposition}\label{prop: simga+ PU3}
    The representation $\sigma_v^+$ is the unique nonzero irreducible quotient of the representation $i_{\bar{B}'}^{\PU_3}(\chi_v \lvert \ \rvert_{K_v}^{1/2}) = I(\chi_v, 1/2)$. In particular, if $\chi_v$ is unramified, the Satake parameter of $\sigma_v^+$ is given by \eqref{eq: Satake parameter PU_3}.
\end{proposition}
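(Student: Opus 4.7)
The plan is to combine Proposition~\ref{thetaliftinWitttower}, applied with $W=W^+$ and $\mu=\bb{1}$, with the nonvanishing criterion of Theorem~\ref{HarrisKudlaSweet}. The former, once the relevant lower theta lift is known to be nonzero, directly delivers both the irreducibility of $\sigma_v^+$ and its realization as a quotient of the desired principal series; the latter translates the nonvanishing hypothesis into an epsilon-factor identity that we expect to coincide exactly with the defining relation \eqref{eq: formula for epsilon(v)epsilon(W)} used to single out $W^+$.

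First I would compute the character $\tilde{\mu}$ attached to $\mu=\bb{1}\cdot\gamma_{\vert K^1}^{-1}=\gamma_{\vert K^1}^{-1}$. For $x\in K_v^\times$ one has $\tilde{\mu}(x)=\gamma(x/x^c)^{-1}=\gamma^c(x)\gamma(x)^{-1}$. Since $\gamma=\chi_v$ is conjugate-symplectic, $\gamma(y)\gamma^c(y)=\gamma(N_{K_v/F_v}(y))=\omega_{K_v/F_v}(N_{K_v/F_v}(y))=1$, so $\gamma^c=\gamma^{-1}$ and hence $\tilde{\mu}=\gamma^{-2}$. Therefore $\gamma\tilde{\mu}^{-1}=\gamma^3=\chi_v^3$, and Theorem~\ref{HarrisKudlaSweet} reduces the nonvanishing of $\Theta_{V_1,W^+,\gamma,\psi_v}(\gamma_{\vert K^1}^{-1})$ to the identity
\[
\epsilon(V_1)\epsilon(W^+)=\epsilon_K\!\left(\tfrac{1}{2},\chi_v^3,\psi_v(\Tr_{K_v/F_v}(-\delta(\cdot)))\right),
\]
which is exactly \eqref{eq: formula for epsilon(v)epsilon(W)}. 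The lower theta lift is therefore nonzero, and Proposition~\ref{thetaliftinWitttower}(2) yields that $\sigma_v^+$ is nonzero, irreducible, and a quotient of $i_{B'}^{\U_3}\!\left(\gamma\lvert\,\cdot\,\rvert_{K_v}^{1/2}\otimes\gamma_{\vert K^1}^{-1}\right)$.

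Next I would identify this $\U_3$-induced representation with $I(\chi_v,1/2)$ as a representation of $\PU_3$. Under the parametrizations $T'_{\U_3}\simeq K_v^\times\times K_v^1$ from Section~\ref{groupsandreps} and $T'_{\PU_3}\simeq K_v^\times$ from Section~\ref{subsection: the group PU3 times gal(K/F)}, the quotient map is $(a_0,a_1)\mapsto a_1/a_0$; one then checks that the inducing character $(a_0,a_1)\mapsto\gamma(a_1)\lvert a_1\rvert_{K_v}^{1/2}\gamma(a_0)^{-1}$ is trivial on the center $a_0=a_1\in K_v^1$ and pushes down to $\chi_v\lvert\,\cdot\,\rvert_{K_v}^{1/2}$ on $T'_{\PU_3}$. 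Since the inducing parameter has positive real part, $I(\chi_v,1/2)$ is a standard module and therefore has a unique irreducible quotient by the Langlands classification; the irreducible quotient $\sigma_v^+$ must coincide with it. For the Satake parameter claim when $\chi_v$ is unramified, $\sigma_v^+$ is the spherical Langlands quotient of the unramified principal series $I(\chi_v,1/2)$, and its Satake parameter is read off from the inducing character; a direct evaluation of $\psi_{\chi,v}(\Phi_v\times\diag(q_v^{-1/2},q_v^{1/2}))$ using the description of $\psi_\chi\vert_{L_{K_v}\times\SL_2(\bb{C})}$ as $\chi_v^{-2}\oplus\chi_v\otimes S_2$ returns the same semisimple conjugacy class.

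The main obstacle I anticipate is the first step above: pinning down $\tilde{\mu}$ carefully enough to confirm that Theorem~\ref{HarrisKudlaSweet} produces, without any spurious sign, precisely the condition \eqref{eq: formula for epsilon(v)epsilon(W)}. Once this matches, everything else is formal: Proposition~\ref{thetaliftinWitttower}(2) does the heavy lifting, the torus bookkeeping descends the inducing character to $\PU_3$, and Langlands classification secures the uniqueness of the quotient.
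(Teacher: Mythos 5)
Your proposal is correct and follows essentially the same route as the paper: use \eqref{eq: formula for epsilon(v)epsilon(W)} together with Theorem~\ref{HarrisKudlaSweet} to get nonvanishing of $\Theta_{V_1,W^+,\gamma,\psi_v}(\gamma_{\vert K_v^1}^{-1})$, apply Proposition~\ref{thetaliftinWitttower}(2), and descend the inducing character from $\U_3$ to $\PU_3$. The only difference is that you spell out the computation $\gamma\tilde{\mu}^{-1}=\gamma^3=\chi_v^3$ (which the paper leaves implicit in the way \eqref{eq: formula for epsilon(v)epsilon(W)} is stated) and invoke the Langlands classification explicitly for uniqueness of the quotient; both verifications are accurate.
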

\begin{proof}
    Consider the representation $\Omega_{V_1, W^+, \gamma, \psi_v}$ of $\U(V_1) \times \U(W^+)$, which induces a theta lift $\Theta_{V_1, W^+, \gamma, \psi_v}$. By \eqref{eq: formula for epsilon(v)epsilon(W)} and Theorem~\ref{HarrisKudlaSweet}, we have that $\Theta_{V_1,W^+, \gamma, \psi}(\gamma_{\vert K^1}^{-1}) \neq 0$. Therefore, it follows from Proposition~\ref{thetaliftinWitttower} (2) that, as a representation of $\U_3$, $\sigma_v^+$ is an irreducible quotient of 
    \[
    i_{B'}^{\U(V)}\left(\gamma \lvert \ \rvert_{K_v}^{1/2} \otimes \gamma_{\vert K_v^1}^{-1}  \right).
    \]
    This representation is invariant under the center of $\U(V)$, and can therefore be viewed as the representation of $\PU_3$ given by $i_{B'}^{\PU_3}(\chi_v \lvert \ \rvert_{K_v}^{1/2})$ (note that we are denoting the Borel of upper-triangular matrices in $\U_3$ and in $\PU_3$ with the same symbol $B'$). The rest of the assertions follow from there.	
\end{proof}

\begin{proposition}
	The representation $\sigma_v^-$ is nonzero irreducible and supercuspidal.
\end{proposition}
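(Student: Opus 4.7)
The plan is to apply Proposition~\ref{thetaliftinWitttower}(1) directly, taking $\mu$ to be the trivial character $\mathbb{1}$ of $\U(W^-) \simeq K_v^1$. That proposition will immediately yield all three conclusions (nonzero, irreducible, supercuspidal), provided that the vanishing hypothesis
\[
\Theta_{V_1, W^-, \gamma, \psi_v}\bigl(\mathbb{1}\cdot\gamma_{\vert K_v^1}^{-1}\bigr)=\Theta_{V_1, W^-, \gamma, \psi_v}\bigl(\gamma_{\vert K_v^1}^{-1}\bigr) = 0
\]
can be verified. So the whole proof reduces to checking this vanishing on the first rung of the Witt tower.

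To verify it, I would invoke Theorem~\ref{HarrisKudlaSweet}, which converts the question into a comparison of the sign $\epsilon(V_1)\epsilon(W^-)$ with a local epsilon factor. The only nontrivial step is identifying the character $\gamma\tilde{\mu}^{-1}$ that appears there. Writing $\mu = \gamma_{\vert K_v^1}^{-1}$ and using that $\gamma = \chi_v$ is conjugate-symplectic, so $\gamma(x^c) = \gamma(x)^{-1}$ for $x\in K_v^\times$, one computes
\[
\tilde{\mu}(x) \;=\; \mu(x/x^c) \;=\; \gamma(x/x^c)^{-1} \;=\; \bigl(\gamma(x)\,\gamma(x^c)^{-1}\bigr)^{-1} \;=\; \gamma(x)^{-2},
\]
and hence $\gamma\,\tilde{\mu}^{-1} = \gamma^{3} = \chi_v^{3}$.

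Plugging this into Theorem~\ref{HarrisKudlaSweet}, the criterion for nonvanishing of $\Theta_{V_1, W^-, \gamma, \psi_v}(\gamma_{\vert K_v^1}^{-1})$ becomes
\[
\epsilon(V_1)\,\epsilon(W^-) \;=\; \epsilon_K\!\left(\tfrac{1}{2},\chi_v^{3},\psi_v(\Tr_{K_v/F_v}(-\delta\,(\cdot)))\right).
\]
But by the defining property of $W^-$ recorded in equation~\eqref{eq: formula for epsilon(v)epsilon(W)}, the left-hand side is exactly the \emph{negative} of the right-hand side. Therefore the criterion fails, the theta lift on the bottom rung vanishes, and Proposition~\ref{thetaliftinWitttower}(1) then delivers that $\sigma_v^- = \Theta_{V, W^-, \gamma, \psi_v}(\mathbb{1})$ is nonzero, irreducible and supercuspidal.

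There is no genuine obstacle in this argument: everything is already in place once the splitting characters and the spaces $W^\pm$ have been chosen so that $W^+$ and $W^-$ sit on opposite sides of the dichotomy of Theorem~\ref{HarrisKudlaSweet}. The one point requiring care is the bookkeeping in the computation of $\tilde{\mu}$, where the conjugate-symplectic relation $\gamma\gamma^{c} = 1$ must be used to reduce $\gamma\,\tilde\mu^{-1}$ to the clean form $\chi_v^{3}$ that matches the epsilon factor appearing in~\eqref{eq: formula for epsilon(v)epsilon(W)}.
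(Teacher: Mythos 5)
Your proof is correct and follows exactly the paper's route: reduce to the vanishing of $\Theta_{V_1,W^-,\gamma,\psi_v}(\gamma_{\vert K_v^1}^{-1})$ via Proposition~\ref{thetaliftinWitttower}(1), and verify that vanishing by combining Theorem~\ref{HarrisKudlaSweet} with the defining relation \eqref{eq: formula for epsilon(v)epsilon(W)}. The only difference is that you make explicit the computation $\gamma\tilde{\mu}^{-1}=\chi_v^3$ (using $\gamma^c=\gamma^{-1}$), which the paper leaves implicit; this is a correct and welcome clarification, not a deviation.
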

\begin{proof}
    The proof is similar to the proof of the previous proposition. Consider the representation $\Omega_{V_1, W^-, \gamma, \psi_v}$ of $\U(V_1) \times \U(W^-)$, which induces a theta lift $\Theta_{V_1, W^-, \gamma, \psi_v}$. In that case, \eqref{eq: formula for epsilon(v)epsilon(W)} and Theorem~\ref{HarrisKudlaSweet} imply that $\Theta_{V_1, W^-, \gamma, \psi_v}(\gamma_{\vert K^1}^{-1}) = 0$ and the result follows from Proposition~\ref{thetaliftinWitttower} (1).
\end{proof}

\subsubsection{Split case} \label{subsubsection: split howe ps} Suppose that $v$ is split in $K$. Choose a place $w$ of $K$ above $v$, which determines an isomorphism $\U_3(F_v) \simeq \GL_3(F_v)$.
Let $\gamma = \chi_w$. Using this data, we define a theta correspondence for the dual pair $\GL_1(F_v) \times \GL_3(F_v) \xhookrightarrow{} \mathrm{Sp_6}(F_v)$ following Section~\ref{subsection: Theta correspondence for GL1 times GL3}. Denote by $\Theta$ the theta lift defined by this correspondence. 

\begin{definition}
    Let $\bb{1}$ be the trivial representation of $\GL_1(F_v)$. Define the representation $\sigma_v^+ = \theta(\bb{1})$. It is a representation of $\U_3(F_v) \simeq \GL_3(F_v)$ where the center of $\U_3(F_v)$ acts trivially, and therefore we view $\sigma_v^+$ as a representation of $\PU_3(F_v) \simeq \PGL_3(F_v)$. 
\end{definition}

Note that if $\chi$ is unramified at $v$, then it follows from Theorem~\ref{thm:minguez} that $\sigma_v^+$ is the unramified representation with Satake parameter $s_{\psi_{\chi,v}}$. In particular, this holds for all but finitely many split primes $v$, as predicted by Arthur's conjecture. 

\subsection{From representations of $\PU_3$ to representations of $\PU_3 \rtimes \bb{Z}/2\bb{Z}$}\label{subsection: from pu3 to pu3 z/2z} Consider the same notation as in Section \ref{subsec: howe ps a packets}. Suppose that $v$ is not split in $K$, let $W^+$, $W^-$ be the skew-Hermitian spaces introduced in Section \ref{nonsplit A-packet Pu3} and let $\sigma_v^\pm$ be the representations of $\PU_3$ defined in Definition \ref{big theta lift howe ps}. Following Remark~\ref{remark: extensions of representations}, we are interested in knowing whether the induction of the representations $\sigma_v^\pm$ to the group $G' = \PU_3\rtimes \Gal(K_v/F_v)$ is irreducible or decomposes as a direct sum of two non-isomorphic representations. While this question was already studied in Section \ref{subsection: results bakic savin} for the case of principal series, the discussion below will be especially relevant to study the irreducibility of the lift of $\sigma_v^-$ from $\PU_3$ to $G$ via the exceptional theta correspondence.

Recall the following notation introduced in Section \ref{groupsandreps}: if $W$ is a skew-Hermitian space equipped with a skew-Hermitian form $\langle \ , \ \rangle$, we denote $W^{-1}$ the skew Hermitian space with the same underlying space $W$ and skew-Hermitian form given by $-\langle \ , \ \rangle$. In particular, in this section we will consider the spaces $(W^+)^{-1}$ and $(W^-)^{-1}$.

\begin{proposition}\label{prop: irreducibility of induction to G'}
    Assume that $\chi_v^2\neq 1$. Then, the representations
    \[
    \Ind_{\PU_3}^{G'} \sigma_v^+ \quad\text{and}\quad \Ind_{\PU_3}^{G'} \sigma_v^-
    \]
    are irreducible.
\end{proposition}
\begin{proof}
    Following Remark~\ref{remark: extensions of representations}, we need to show that, for $?\in\lbrace +,-\rbrace$, we have that $\sigma_v^?\not\simeq (\sigma_v^?)^c$. However, it follows from \cite[Lemma~2.1(ii)]{HKS} that $(\sigma_v^?)^c\simeq (\sigma_v^?)^\vee$, where $(\sigma_v^?)^\vee$ denotes the contragredient of $\sigma_v^?$. Therefore, we need to prove that $\sigma_v^?\not\simeq (\sigma_v^?)^\vee$.

    Recall that we take $\gamma=\chi_v$. The representation $\sigma_v^?=\Theta_{V,W^?,\gamma,\psi_v}(\bb{1}_{W^?})$ is the isotypic component of the Weil representation
    \[
    \Omega_{V,W^?,\gamma,\psi_v}
    \]
    where the center of $\U(V)$ acts trivially. According to \cite[\S2.5]{AWS}, the dual of this Weil representation is
    \[
    \overline{\Omega_{V,W^?,\gamma,\psi_v}}\simeq \Omega_{V,W^?,\gamma^{-1},\bar{\psi}_v}\simeq
    \Omega_{V,(W^?)^{-1},\gamma^{-1},\psi_v},
    \]
    and thus $(\sigma_v^?)^\vee$ is the isotypic component $\Omega_{V,(W^?)^{-1},\gamma^{-1},\psi_v}$ where the center of $\U(V)$ acts trivially. Equivalently, we have that $(\sigma_v^?)^\vee=\Theta_{V,(W^?)^{-1},\gamma^{-1},\psi_v}(\mathbb{1}_{(W^?)^{-1}})$.
    
    According to \cite[Theorem~4.3(4)]{AG}, respectively \cite[Theorem~4.5(1)]{AG}, and using the notation in [\emph{loc.\,cit.}], the $L$-parameter associated with the representation $\sigma_v^+$, respectively $\sigma_v^-$, is $\gamma^{-2}\oplus\gamma\vert\cdot\vert_{K_v}^{1/2}\oplus\gamma\vert\cdot\vert_{K_v}^{-1/2}$, respectively $\gamma^{-2}\oplus\gamma S_2$. On account of the previous paragraph, and using again [\emph{loc.\,cit.}], for the representations $(\sigma_v^+)^\vee$ and $(\sigma_v^-)^\vee$ the corresponding $L$-parameters are $\gamma^{2}\oplus\gamma^{-1}\vert\cdot\vert_{K_v}^{1/2}\oplus\gamma^{-1}\vert\cdot\vert_{K_v}^{-1/2}$ and $\gamma^{2}\oplus\gamma^{-1} S_2$. Since $\gamma\neq\gamma^{-1}$, we conclude that $\sigma_v^?$ and $(\sigma_v^?)^\vee$ belong to different $L$-packets and are therefore non-isomorphic. We note that, although the results in \cite{AG} rely on the local Langlands correspondence, this is known for the group $\U_3$ by the work of Rogawski \cite{Rog90}.
\end{proof}

\begin{proposition}\label{prop: reducibility of induction to G'}
    Assume that $\chi_v^2=1$. Then, the representations
    \[
    \Ind_{\PU_3}^{G'} \sigma_v^+ \quad\text{and}\quad \Ind_{\PU_3}^{G'} \sigma_v^-
    \]
    are not irreducible.
\end{proposition}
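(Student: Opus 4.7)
The plan is to mirror the strategy of Proposition~\ref{prop: irreducibility of induction to G'}, but use $\chi_v^2=1$ to reach the opposite conclusion. By Remark~\ref{remark: extensions of representations}, reducibility of $\Ind_{\PU_3}^{G'}\sigma_v^{\pm}$ is equivalent to $\sigma_v^{\pm}\simeq (\sigma_v^{\pm})^c$, and \cite[Lemma~2.1(ii)]{HKS} translates this into checking $\sigma_v^{\pm}\simeq (\sigma_v^{\pm})^\vee$.

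First I would recall the $L$-parameters produced by \cite[Thm.~4.3(4), Thm.~4.5(1)]{AG} in the preceding proof: those of $\sigma_v^+$ and $(\sigma_v^+)^\vee$ are $\gamma^{-2}\oplus\gamma\vert\cdot\vert_{K_v}^{1/2}\oplus\gamma\vert\cdot\vert_{K_v}^{-1/2}$ and $\gamma^{2}\oplus\gamma^{-1}\vert\cdot\vert_{K_v}^{1/2}\oplus\gamma^{-1}\vert\cdot\vert_{K_v}^{-1/2}$, while those of $\sigma_v^-$ and $(\sigma_v^-)^\vee$ are $\gamma^{-2}\oplus\gamma S_2$ and $\gamma^{2}\oplus\gamma^{-1}S_2$. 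Under the hypothesis $\chi_v^2=1$ one has $\gamma^{\pm 2}=1$ and $\gamma^{-1}=\gamma$, so each pair of parameters coincides. Since $\sigma_v^+$ is the unique Langlands quotient of $I(\chi_v,1/2)$, its $L$-packet is a singleton and $(\sigma_v^+)^\vee\simeq\sigma_v^+$ follows immediately.

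The tempered case is more delicate: the parameter $1\oplus\gamma S_2$ is endoscopic for $\U_3$, factoring through $\U_1\times\U_2\subset\U_3$, so Rogawski's $L$-packet has two elements and matching $L$-parameters alone does not force $(\sigma_v^-)^\vee\simeq\sigma_v^-$. I would close this gap by invoking the A-packet: the restriction of $\psi_{\chi,v}$ to $L_{K_v}\times\SL_2(\bb{C})$ is $1\oplus\gamma\otimes S_2$, which is manifestly self-contragredient when $\chi_v^2=1$, so the A-packet $A_{\psi_{\chi,v}}=\lbrace\sigma_v^+,\sigma_v^-\rbrace$ is preserved by contragredient. Since $\sigma_v^+$ and $\sigma_v^-$ sit in distinct $L$-packets (the former nontempered, the latter tempered), contragredient cannot swap them and must therefore fix each, giving $(\sigma_v^-)^\vee\simeq\sigma_v^-$.

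The main obstacle I anticipate is making rigorous the contragredient-stability of the A-packet within the framework of this paper, where the A-packet is constructed through the theta correspondence of Section~\ref{subsec: howe ps a packets} rather than axiomatically. One route is to invoke the compatibility of Mok's local Langlands correspondence for unitary groups with the MVW involution. A more self-contained alternative is a direct computation paralleling the previous proof: the contragredient of $\Theta_{V,W^?,\gamma,\psi_v}(\bb{1})$ equals $\Theta_{V,(W^?)^{-1},\gamma,\psi_v}(\bb{1})$ when $\gamma^2=1$, and one must verify that the sign labeling of $W^\pm$ is compatible with both replacing $W^?$ by $(W^?)^{-1}$ and the accompanying change of the epsilon factor appearing in~\eqref{eq: formula for epsilon(v)epsilon(W)}.
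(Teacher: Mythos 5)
There is a genuine gap, and it is the one you yourself flag. Your primary route fails for $\sigma_v^-$: the parameter $1\oplus\gamma S_2$ has a two-element packet, so equality of $L$-parameters does not give $(\sigma_v^-)^\vee\simeq\sigma_v^-$, and the contragredient-stability of the A-packet that you invoke to close the gap is precisely the kind of external input the paper avoids (here the A-packet is \emph{defined} by theta lifting, not axiomatically, so ``the A-packet is preserved by contragredient'' is not available without serious additional work such as the compatibility of Mok's packets with the MVW involution). Your fallback route is in fact the paper's actual proof, but you stop one step short and misidentify what needs to be checked: no bookkeeping of the $\pm$ labels, and no comparison with the epsilon factor in \eqref{eq: formula for epsilon(v)epsilon(W)}, is required. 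The whole point is that $(W^?)^{-1}\simeq W^?$ as skew-Hermitian spaces, for both signs at once.

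Concretely: since $\chi_v$ is conjugate-symplectic, $\chi_v\chi_v^c=\chi_v\circ\norm_{K_v/F_v}=\omega_{K_v/F_v}\circ\norm_{K_v/F_v}=1$, so $\chi_v^2=1$ forces $\chi_v=\chi_v^c$. Writing $-1=a/a^c$ by Hilbert~90, one gets $\omega_{K_v/F_v}(-1)=\chi_v(-1)=\chi_v(a)/\chi_v^c(a)=1$, i.e.\ $-1\in\norm_{K_v/F_v}(K_v^\times)$. Scaling a skew-Hermitian form by a norm does not change its isomorphism class, so $(W^?)^{-1}\simeq W^?$, and therefore
\[
(\sigma_v^?)^\vee=\Theta_{V,(W^?)^{-1},\gamma^{-1},\psi_v}(\bb{1})=\Theta_{V,(W^?)^{-1},\gamma,\psi_v}(\bb{1})\simeq\Theta_{V,W^?,\gamma,\psi_v}(\bb{1})=\sigma_v^?,
\]
using $\gamma^{-1}=\gamma$. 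This single observation handles $\sigma_v^+$ and $\sigma_v^-$ uniformly and makes your separate $L$-parameter argument for $\sigma_v^+$ unnecessary.
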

\begin{proof}
    Following Remark~\ref{remark: extensions of representations}, we need to show that, for $?\in\lbrace +,-\rbrace$, we have that $\sigma_v^?\simeq (\sigma_v^?)^c$. However, it follows from \cite[Lemma~2.1(ii)]{HKS} that $(\sigma_v^?)^c\simeq (\sigma_v^?)^\vee$, where $(\sigma_v^?)^\vee$ denotes the contragredient of $\sigma_v^?$. Therefore, we need to prove that $\sigma_v^?\simeq (\sigma_v^?)^\vee$.

    Recall that we take $\gamma=\chi_v$. The representation $\sigma_v^?=\Theta_{V,W^?,\gamma,\psi_v}(\bb{1}_{W^?})$ is the isotypic component of the Weil representation
    \[
    \Omega_{V,W^?,\gamma,\psi_v}
    \]
    where the center of $\U(V)$ acts trivially. According to \cite[\S2.5]{AWS}, the dual of this Weil representation is
    \[
    \overline{\Omega_{V,W^?,\gamma,\psi_v}}\simeq \Omega_{V,W^?,\gamma^{-1},\bar{\psi}_v}\simeq
    \Omega_{V,(W^?)^{-1},\gamma,\psi_v},
    \]
    and thus $(\sigma_v^?)^\vee$ is the isotypic component $\Omega_{V,(W^?)^{-1},\gamma,\psi_v}$ where the center of $\U(V)$ acts trivially. 
    Note that we used that $\gamma^{-1} = \gamma$, which holds because $\chi_v^2=1$. Also, the condition $\chi_v^2=1$ implies that $\chi_v=\chi_v^c$. By Hilbert's Theorem~90, we can write $-1=a/a^c$ for some $a\in K^\times$. Therefore, we deduce that $\omega_{K/F}(-1)=\chi_v(-1)=\chi_v(a)/\chi_v^c(a)=1$. In particular, it follows that $-1\in \norm_{K/F}(K^\times)$ and hence $(W^?)^{-1}\simeq W^?$, which shows that $\sigma_v^?\simeq (\sigma_v^?)^\vee$.
\end{proof}

\subsection{Dihedral long root A-packets for $G_2$}\label{subsection: local a packets g2}
We discuss local nonarchimedian A-packets associated to the dihedral long root A-parameter introduced in Definition~\ref{global A par G2}. As before, we obtain the local A-parameter $\psi_{\tau, l, v}$ at a place $v$, whose component group $S_{\psi_{\tau, l,v}}$ is given by
\[
S_{\psi_{\tau, l,v}} = 
\begin{cases}
\bb{Z}/2\bb{Z} & \text{ if } \rho_{\tau,v} \text{ is irreducible (or, $\tau_v$ is a discrete series representation)},\\
1 & \text{ if } \rho_{\tau,v} \text{ is reducible (or, $\tau_v$ is not a discrete series representation)}.
\end{cases} 
\]

\subsubsection{Nonsplit case}
Let $v$ be a nonarchimedian place of $F$ which does not split in $K$, and denote by $K_v$ the completion of $K$ at the unique place above $v$. We continue to use $c$ to denote the generator of $\Gal(K_v/F_v)$. In this case, $\tau_v$ is the smooth irreducible representation of $\PGL_2$ with $L$-parameter $\rho_{\tau,v}=\Ind_{W_{F_v}}^{W_{K_v}}\chi_v$. The representation $\Ind_{W_{F_v}}^{W_{K_v}}\chi_v$ is irreducible if and only if $\chi_v \neq \chi_v^c$. Thus we have the equivalent characterization for a nonsplit $v$:
\[
S_{\psi_{\tau, l,v}} = 
\begin{cases}
\bb{Z}/2\bb{Z} & \text{ if } \chi_v \neq \chi_v^c ,\\
1 & \text{ if } \chi_v = \chi_v^c.
\end{cases}
\]
Arthur's conjectures predict that the local A-packets are as follows:
\begin{equation}\label{A packet G2}
A_{\psi_{\tau, l,v}} = 
\begin{cases}
\{\pi_v^+, \pi_v^-\} & \text{ if } \chi_v \neq \chi_v^c ,\\
\{\pi_v^+\} & \text{ if } \chi_v = \chi_v^c.
\end{cases}
\end{equation}
Here, $\pi_v^+$ is indexed by the trivial character of $S_{\psi,l,v}$ and $\pi_v^-$ by the nontrivial character of $\bb{Z}/2\bb{Z}$ when it occurs.  Arthur's conjectures also predict that, for all but finitely many places, $\pi^+_v$ is the unramified principal series representation whose Satake parameter is given by 
\begin{equation}\label{satake par for A par of G2}
    s_{\psi_{\tau, l ,v}}=\psi_{\tau, l ,v}\left(\Phi_v\times \begin{pmatrix} q_v^{-1/2} & 0 \\ 0 & q_v^{1/2} \end{pmatrix}\right), 
\end{equation}
where, as before, $\Phi_v$ denotes a geometric Frobenius at $v$ and $q_v$ denotes the cardinality of the corresponding residue field.

We carry out the construction of the expected local A-packets using the exceptional theta correspondence for the pair $(G_2, \PU_3 \rtimes \bb{Z}/2\bb{Z})$ introduced in Section~\ref{subsection: dual pair}. Recall that Definition~\ref{big theta lift howe ps} gives us the construction of the representations $\sigma_v^+$ and $\sigma_v^-$ of $\PU_3$.

\begin{definition}\label{definition theta lift g2}
    With the notation $\theta_{\PU_3}$ established in Definition~\ref{definition small theta pu3} in mind, we define
    \begin{enumerate}
        \item $\pi_v^+ \coloneqq \theta_{\PU_3}(\sigma_v^+)$,
        \item $\pi_v^- \coloneqq \theta_{\PU_3}(\sigma_v^-)$.
    \end{enumerate}
\end{definition}
Observe that the local A-packet for $\PU_3$ as constructed in Section~\ref{nonsplit A-packet Pu3} always has two elements when $v$ is nonsplit in $K$, but the local A-packet for $G_2$ as in \eqref{A packet G2} is expected to have two elements when $\chi_v \neq \chi_v^c$ and one otherwise. We construct the latter A-packet by lifting the two representations $\{\sigma_v^+,\sigma_v^-\}$. We show in subsequent sections that there is no discrepancy in the size of the packets; indeed we will see that, while $\pi_v^+$ is always nonzero, $\pi_v^-$ vanishes if and only if $\chi_v=\chi_v^c$. This is summarized in the following theorem.


\begin{theorem}\label{theorem: pi+ for g2} The lifting of the local A-packet $\{\sigma_v^+,\sigma_v^-\}$ for a nonsplit place $v$ via the exceptional theta correspondence for the dual pair $(G_2, \PU_3 \rtimes \Gal(K_v/F_v))$ yields the following:
\begin{enumerate}
    \item $\pi_v^+$ is nonzero and it is equal to the unique irreducible quotient of the parabolically induced representation $i_{Q_1}^G(\lvert\det\rvert^{1/2}\tau_v)$. For all but finitely many nonsplit places $v$, the representation $\pi_v^+$ is the unramified principal series with Satake parameter given by \eqref{satake par for A par of G2}. 
    \item If $\chi_v \neq \chi_v^c$, the representation $\pi_v^-$ is nonzero, irreducible and tempered.
    \item If $\chi_v = \chi_v^c$, the representation $\pi_v^-$ is zero.
\end{enumerate}
\end{theorem}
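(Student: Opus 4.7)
The plan is to separate the three claims and handle each by combining the structural theorems recalled in Sections~\ref{section: Theta correspondence for unitary groups} and~\ref{section: Theta correspondence PU3 times G2} with two more delicate analytic inputs: a Fourier--Jacobi period calculation (for nonvanishing) and a twisted-coinvariant calculation (for vanishing).

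For part (1), I would first invoke Proposition~\ref{prop: simga+ PU3}, which identifies $\sigma_v^+$ as the unique irreducible quotient of $I(\chi_v, 1/2) = i_{B'}^{\PU_3}(\chi_v |\cdot|_{K_v}^{1/2})$. Applying Proposition~\ref{prop: exceptional theta lift non tempered representation}(3) with $s=1/2$, together with Remark~\ref{rmk: tau is the automorphic induction of chi} which identifies the relevant $\GL_2$-representation with $|\det|^{1/2}\tau_v$, yields that $\pi_v^+ = \theta_{\PU_3}(\sigma_v^+)$ is a nonzero quotient of $i_{Q_1}^G(|\det|^{1/2}\tau_v)$. Since the inducing data is in positive Langlands position, the standard module has a unique irreducible quotient, and that quotient must coincide with $\pi_v^+$ (which is irreducible by definition of the small theta lift together with Howe duality). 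The Satake parameter assertion at almost all nonsplit primes then follows because at such primes $\sigma_v^+$ is unramified with Satake parameter $s_{\psi_{\chi, v}}$, and this parameter transports to $s_{\psi_{\tau, l, v}}$ under the embedding $^L\PU_3 \hookrightarrow G(\bb{C})$.

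For part (2), the strategy is to verify a nonvanishing criterion for $\theta_{\PU_3}(-)$ by exploiting the structure of the minimal representation $\Pi$. I would compute the Jacquet module of $\Pi$ along the unipotent radical of the Heisenberg parabolic $Q_2$ together with a suitable Fourier--Jacobi character, as indicated in the introduction. The outcome should be a criterion stating that $\Theta_{\PU_3}(\sigma)$ is nonzero provided $\sigma^\vee$, restricted to an appropriate two-variable unitary subgroup of $\PU_3$, admits a quotient with trivial central character. To apply this to $\sigma_v^-$, I would invoke a see-saw argument: since $\sigma_v^-$ is itself the theta lift from $\U_1$ to $\U_3$ of the trivial character, the required Hom space becomes a theta-correspondence problem on the smaller group, where the hypothesis $\chi_v^2 \neq 1$ (equivalently $\chi_v \neq \chi_v^c$) is what allows the criterion to be met. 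Irreducibility follows since $\sigma_v^-$ is supercuspidal (hence the big theta lift equals the small theta lift by standard results on theta of supercuspidals), and temperedness is a consequence of the supercuspidality of $\sigma_v^-$ combined with the behaviour of matrix coefficients under the theta correspondence.

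For part (3), the plan is to prove $\pi_v^- = 0$ by showing its twisted coinvariants along the unipotent radical of the Heisenberg parabolic vanish for every generic character $\psi_N$. Concretely, the explicit description of $\pi_v^+$ obtained in (1) as the Langlands quotient of $i_{Q_1}^G(|\det|^{1/2}\tau_v)$ lets me compute the $(\pi_v^+, \psi_N)$-coinvariant and show it is one-dimensional. Independently, computing the analogous coinvariants of $\Pi$ itself and running a see-saw through the $(\PU_3, G_2)$-pair expresses the sum $(\pi_v^+ \oplus \pi_v^-, \psi_N)$-coinvariant as a sum of toric periods for $\sigma_v^+ \oplus \sigma_v^-$ against a suitable torus of $\PU_3$. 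Since $\sigma_v^\pm$ are themselves theta lifts of characters on $\U_1$, results of \cite{BFGYYZ} express the nonvanishing of each toric period in terms of local epsilon factors, and show that precisely one of the two toric periods is nonzero and contributes dimension one. Matching dimensions then forces the $(\pi_v^-, \psi_N)$-coinvariant to vanish for all generic $\psi_N$; combined with the fact that $\Theta_{\PU_3}(\sigma_v^-)$ is supported on Whittaker-type data of the Heisenberg parabolic (otherwise the theta lift would already be trivial by the earlier results in Section~\ref{section: Theta correspondence PU3 times G2}), this forces $\pi_v^- = 0$. The main obstacle here is the careful bookkeeping for the see-saw and the identification of splitting/central characters so that the normalizations match those of \cite{BFGYYZ}; the Fourier--Jacobi period calculation in (2) is of comparable technical weight.
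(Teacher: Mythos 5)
Your overall architecture matches the paper's: part (1) via Proposition~\ref{prop: simga+ PU3}, Proposition~\ref{prop: exceptional theta lift non tempered representation} and the uniqueness of the irreducible quotient of the standard module; part (2) via a nonvanishing criterion for $\Theta_{\PU_3}$ phrased in terms of restriction of $\sigma^\vee$ to a two-variable unitary subgroup, verified by a see-saw; part (3) via $U_2$-twisted coinvariants and the toric-period dichotomy of \cite{BFGYYZ}. Two corrections and two genuine gaps. First, a correction: the Fourier--Jacobi period underlying the nonvanishing criterion is computed along $U_1(2)$, a step of the filtration of the unipotent radical of the \emph{three-step} parabolic $Q_1$ (using the description of $\Pi_{U_1(2),\psi'}$ as a sum of compactly induced Weil representations from $\U(V_2)$ and $\U(V_2')$), not along the Heisenberg parabolic $Q_2$; the Heisenberg unipotent $U_2$ only enters in part (3).

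The gaps. For part (2), irreducibility of $\pi_v^-$ does \emph{not} follow from ``standard results on theta of supercuspidals'': there is no Howe-duality or Kudla-type theorem available off the shelf for this exceptional dual pair. The paper's route is to first show that $\pi_v^-$ has no generic subquotient (this uses \cite[Corollary~4.6]{BS} and reduces to the vanishing of Whittaker periods of $(\sigma_v^-)^\vee$, which holds because $\sigma_v^-$ is supercuspidal), to check that $\Ind_{\PU_3}^{G'}\sigma_v^-$ is irreducible when $\chi_v^2\neq 1$ (Proposition~\ref{prop: irreducibility of induction to G'}, which itself requires comparing the $L$-parameters of $\sigma_v^-$ and $(\sigma_v^-)^\vee$), and only then to invoke \cite[Remark~4.4]{BS} and \cite[Proposition~4.15]{BS} to conclude that the nonzero lift has a unique irreducible (tempered) quotient. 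For part (3), the vanishing of all generic $\psi_E$-coinvariants along $U_2$ only controls the \emph{non-generic} subquotients: by \cite[Lemma~4.13]{BS} it forces any non-generic subquotient to be finite-dimensional, and temperedness then kills these. You must separately rule out \emph{generic} subquotients of $\pi_v^-$, which is again the Whittaker-period vanishing coming from supercuspidality of $\sigma_v^-$; your appeal to $\Theta_{\PU_3}(\sigma_v^-)$ being ``supported on Whittaker-type data of the Heisenberg parabolic'' does not substitute for this step.
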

\begin{proof} 
We prove the first point of the theorem and outline the proofs for the second and third points, which will be completed in the next sections.
\begin{enumerate}
    \item According to Proposition~\ref{prop: simga+ PU3}, the representation $\sigma_v^+$ is the unique irreducible quotient of $i_{B'}^{\PU_3}(\chi_v \lvert \ \rvert_{K_v}^{1/2}) = I(\chi_v, 1/2)$. Now note that $\chi_v$ is unitary (because $\chi$ is conjugate symplectic) and that the automorphic induction of $\chi_v\lvert \ \rvert_{K_v}^{1/2}$ is equal to $\lvert\det\rvert^{1/2}\tau_v$. It therefore follows from Proposition~\ref{prop: exceptional theta lift non tempered representation} and Remark~\ref{rmk: tau is the automorphic induction of chi} that $\Theta_{\PU_3}(\sigma_v^+)$ is a nonzero quotient of $i_{Q_1}^G(\lvert\det\rvert^{1/2}\tau_v)$. Since $\tau_v$ is a tempered representation, we deduce that $i_{Q_1}^G(|\det|^{1/2}\tau_v)$ has a unique irreducible quotient (see \cite[p.~468]{M}), which yields the desired description for $\pi_v^+ = \theta_{\PU_3}(\sigma^+)$.
    \item  If $\chi_v\neq\chi_v^c$, it follows from Theorem~\ref{thm: nonvanishing of pi} that $\pi_v^-$ is not zero, and it follows from Proposition~\ref{prop: generic vanishing} that it does not have any generic subquotient. Therefore, the fact that $\pi^-$ is irreducibile and tempered follows from Proposition~\ref{prop: irreducibility of induction to G'}, \cite[Remark~4.4]{BS} and \cite[Proposition~4.15]{BS}, as we explain in Theorem \ref{thm: pi- is irreducible if chi^2 neq 1} and in its proof.
    \item If $\chi_v=\chi_v^c$, the vanishing of $\pi_v^-$ follows from Theorem~\ref{thm: vanishing}. 
\end{enumerate}
\end{proof}

\subsubsection{Split case} Let $v$ be a nonarchimedian place of $F$ which splits in $K$ and let $w$ and $w'$ be the places of $K$ lying above $v$. There are natural identifications $F_v\simeq K_w\simeq K_{w'}$. Since $\chi$ is conjugate-symplectic, we have that $\chi_{w'}=\chi_w^{-1}$ and $\chi_w$ is unitary. Therefore,
\[
\rho_{\tau,v}=\chi_w\oplus \chi_{w'}=\chi_w\oplus\chi_w^{-1}
\]
and $\tau_v=\pi(\chi_w,\chi_w^{-1})$ is equal to the normalized parabolic induction $i_P^{\GL_2(F_v)}(\chi_w, \chi_w^{-1})$, where here we are using $P$ to denote the Borel of $\GL_2(F_v)$ consisting on upper triangular matrices. Since $\rho_{\tau,v}$ is reducible, it follows that the component group $S_{\psi_{\tau,l,v}}$ is trivial.

The choice of the prime $w$ determines an isomorphism $\PU_3(F_v)\simeq\PGL_3(F_v)$. We construct the predicted singleton A-packet by using theta lifting from $\PGL_3(F_v)$ to $G_2(F_v)$. Recall from Section~\ref{subsubsection: split howe ps} that $\sigma_v^+$ is the theta lift of the trivial character $\mathbbm{1}$ of $F_v^{\times}$ to $\GL_3(F_v^{\times})$ using theta correspondence for the dual pair $(\GL_1, \GL_3)$ and we interpret the result, which has trivial central character, as a representation of $\PGL_3$. More precisely, applying Theorem~\ref{thm:minguez}, with $\mu=1$ and $\gamma=\chi_w$,
\begin{equation*}
    \sigma_v^+ = \pi(\chi_w|\ |^{1/2}_{F_v}, \chi_w^{-2}, \chi_w|\ |_{F_v}^{-1/2}),
\end{equation*} 
the Langlands quotient of $i_{B'}^{\GL_3}(\chi_w|\ |_{F_v}^{1/2}\otimes \chi_w^{-2}\otimes \chi_w|\ |_{F_v}^{-1/2})$, where $B'$ is the standard Borel subgroup of $\GL_3$. Let $\Theta_{\PGL_3}$ stand for big theta lift from $\PGL_3(F_v)$ to $G_2(F_v)$ and let $\tilde{\tau}_v=\pi(\chi_w^{-1}|\ |_{F_v}^{1/2}, \chi_w^2)$, which can be viewed as a representation of $Q_2$. It follows from Proposition~\ref{prop:theta lift pgl3 g2 general} that $\Theta_{\PGL_3}(\sigma_v^+)$ is a nonzero quotient of $i_{Q_2}^{G_2}(\tilde{\tau}_v)$, and therefore that it has finite length. 

\begin{definition}\label{definition theta lift g2 split}
    Define
    \[
    \pi_v^+ \coloneqq \theta_{\PGL_3}(\sigma_v^+)
    \]
    to be the maximal semisimple quotient of $\Theta_{\PGL_3}(\sigma_v^+)$.
\end{definition}

\begin{theorem}\label{split theta lift to g2}
    The representation $\pi_v^+$ is the unique irreducible quotient of $i_{Q_1}^{G_2}(\vert\det\vert^{1/2}\tau_v)$. For all but finitely many nonsplit places $v$, the representation $\pi_v^+$ is the unramified principal series with Satake parameter given by \eqref{satake par for A par of G2}.
\end{theorem}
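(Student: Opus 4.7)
The strategy mirrors the proof of Theorem~\ref{theorem: pi+ for g2}(1), the key observation being that both $i_{Q_1}^{G_2}(\lvert\det\rvert^{1/2}\tau_v)$ and $i_{Q_2}^{G_2}(\tilde{\tau}_v)$ can be realized, via induction in stages, as principal series from the same character of the split torus $T$ of $G_2$. First I would show that $i_{Q_1}^{G_2}(\lvert\det\rvert^{1/2}\tau_v) = i_B^{G_2}(\psi)$ for a suitable character $\psi$ of $T$, and that $i_{Q_2}^{G_2}(\tilde{\tau}_v)$ is a (nonzero) quotient of the same $i_B^{G_2}(\psi)$. Then, since $\Theta_{\PGL_3}(\sigma_v^+)$ is a nonzero finite length quotient of $i_{Q_2}^{G_2}(\tilde{\tau}_v)$ by Proposition~\ref{prop:theta lift pgl3 g2 general}, it will follow that $\pi_v^+$ is the cosocle of $i_B^{G_2}(\psi)$, which will be identified with the Langlands quotient of $i_{Q_1}^{G_2}(\lvert\det\rvert^{1/2}\tau_v)$.

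For the torus computation, using coordinates $t_1 = (2\alpha+\beta)(t)$, $t_2 = (\alpha+\beta)(t)$ on $T$, the identification \eqref{isoL1GL2} pulls the $\GL_2$-torus character $\chi_w\lvert\ \rvert^{1/2}\otimes \chi_w^{-1}\lvert\ \rvert^{1/2}$ back to $T$ as $\psi(t_1,t_2) = \chi_w^{-1}(t_1)\chi_w^2(t_2)\lvert t_1\rvert^{1/2}$, after simplification. Similarly, \eqref{isloL2GL2} pulls $\chi_w^{-1}\lvert\ \rvert^{1/2}\otimes \chi_w^2$ back to the same $\psi$. Since $\chi_w$ is unitary, $\tau_v$ is tempered and $\tau_v = i_P^{\GL_2}(\chi_w \otimes \chi_w^{-1})$, so induction in stages gives $i_{Q_1}^{G_2}(\lvert\det\rvert^{1/2}\tau_v) = i_B^{G_2}(\psi)$. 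Since $\tilde{\tau}_v$ is by definition the Langlands quotient of $i_P^{\GL_2}(\chi_w^{-1}\lvert\ \rvert^{1/2} \otimes \chi_w^2)$, exactness of parabolic induction gives $i_{Q_2}^{G_2}(\tilde{\tau}_v)$ as a quotient of $i_B^{G_2}(\psi)$.

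Now I apply Langlands classification exactly as in the proof of Theorem~\ref{theorem: pi+ for g2}(1): since $\tau_v$ is tempered and $\lvert\det\rvert^{1/2}$ contributes a strictly positive exponent, the representation $i_{Q_1}^{G_2}(\lvert\det\rvert^{1/2}\tau_v) = i_B^{G_2}(\psi)$ has a unique irreducible quotient $L$. Hence $\Theta_{\PGL_3}(\sigma_v^+)$, being a nonzero quotient of $i_B^{G_2}(\psi)$, has maximal semisimple quotient contained in $L$; by non-vanishing, $\pi_v^+ = \theta_{\PGL_3}(\sigma_v^+) = L$.

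For the Satake parameter assertion, for all but finitely many (split) $v$ the character $\chi_w$ is unramified, so $\psi$ is an unramified character of $T$ and $L = \pi_v^+$ is the unique unramified subquotient of $i_B^{G_2}(\psi)$, whose Satake parameter is read off from $\psi$. The remaining task is a direct computation: the factorization $\psi_{\tau,l,v} = (\rho_{\tau,v}\times \id)$ into $\SL_{2,s}(\bb{C})\times_{\mu_2}\SL_{2,l}(\bb{C}) \subset G_2(\bb{C})$, together with $\rho_{\tau,v}(\Phi_v) = \diag(\chi_w(\Phi_v), \chi_w^{-1}(\Phi_v))$ in $\SL_{2,s}(\bb{C})$, must yield the same element of the dual torus as $\psi$ evaluated at a uniformizer. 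I expect the main technical obstacle here to be this last bookkeeping step: tracking the conventional identifications between the torus coordinates $(t_1,t_2)$, the embeddings of $\SL_{2,s}$ and $\SL_{2,l}$ in $G_2(\bb{C})$, and the factors through $\PGL_3$, all of which are routine but require careful parsing of the dual pair construction.
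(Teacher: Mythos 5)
Your proposal is correct and follows essentially the same route as the paper: the paper also reduces both $i_{Q_1}^{G_2}(\lvert\det\rvert^{1/2}\tau_v)$ and $i_{Q_2}^{G_2}(\tilde{\tau}_v)$ to the same Borel-induced principal series (via the identifications \eqref{isoL1GL2} and \eqref{isloL2GL2}, i.e.\ equations \eqref{eq: from parabolic to Borel} and \eqref{eq: change of variables alpha and beta} with $\mu$ replaced by $\chi_w$), and then invokes the uniqueness of the irreducible (Langlands) quotient of $i_{Q_1}^{G_2}(\lvert\det\rvert^{1/2}\tau_v)$ together with Proposition~\ref{prop:theta lift pgl3 g2 general}. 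Your torus character computation matches the paper's, and your quotient (rather than isomorphism) statement for $i_{Q_2}^{G_2}(\tilde{\tau}_v)$ suffices for the argument.
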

\begin{proof}
    Let $\tilde{\tau}_v=\pi(\chi_w^{-1}|\ |_{F_v}^{1/2}, \chi_w^2) = i_P^{\GL_2(F_v)}(\chi_w^{-1}|\ |_{F_v}^{1/2}, \chi_w^2)$ be as above. Then, it follows from Proposition~\ref{prop:theta lift pgl3 g2 general} that $\Theta_{\PGL_3}(\sigma_v^+)$ is a nonzero quotient of $i_{Q_2}^{G_2}(\tilde{\tau}_v)$. Following a similar procedure as in the proof of Proposition~\ref{From3stepparabolictoHeisenberg} below, it can be seen that
    \[
    i_{Q_2}^{G_2}(\tilde{\tau}_v)\simeq i_{Q_1}^{G_2}(\vert\det\vert^{1/2}\tau_v).
    \]
    Indeed, this isomorphism follows from \eqref{eq: from parabolic to Borel} and \eqref{eq: change of variables alpha and beta} of the proof of Proposition~\ref{From3stepparabolictoHeisenberg} after we replace the character $\mu$ used there by the character $\chi_w$.
    Since the representation on the right hand side has a unique irreducible quotient (see \cite[p.~468]{M}), the result follows. In fact, similarly as in the case of Remark~\ref{remark: pi+ is irreducible} below, it can be proved that $\Theta_{\PGL_3}(\sigma_v^+)$ is equal to the unique irreducible quotient of this representation, given by $i_{Q_2}^{G_2}(\chi_w\circ\det)$.
\end{proof}

\section{Non-vanishing of theta lifts}\label{section: nonvanishing of theta lifts}

Let $K/F$ be a quadratic extension of local fields of characteristic zero and consider the notation introduced in Section~\ref{section: Theta correspondence PU3 times G2}. In this section, we prove the following criterion regarding non-vanishing of theta lifts from $\PU_3$ to $G$: if the contragredient of a representation $\tau$ of $\PU_3$ (viewed as a representation of $\U_3$) has an irreducible quotient with trivial central character when restricted to a suitable two-variable unitary subgroup, then $\Theta_{\PU_3}(\tau) \neq 0$. This criterion is proven by computing Fourier--Jacobi periods: with the notation introduced in Section~\ref{group g2 parabolics}, we show that, for any nontrivial character $\psi'$ of $U_1(2)/U_1(3)$, 
\[
\Hom_{\PU_3}(\Pi_{U_1(2), \psi'} , \tau) \neq 0.
\]
The key to prove this result is a description of $\Pi_{U_1(2), \psi'}$ in terms of Weil representations attached to two-variable unitary subgroups and $\SL_2$. In Section~\ref{subsec: the groups U(V2) and U(V2')}, we provide a suitable description of two-variable unitary subgroups; in Section~\ref{subsec: an orbit problem}, we introduce the symplectic spaces that determine the aforementioned Weil representations, and, in Section~\ref{subsec: non vanishing of theta lifts}, we prove the criterion.

Fix a nontrivial additive character $\psi:F\rightarrow \bb{C}^\times$ and fix also a conjugate-symplectic character $\chi:K^\times\rightarrow \bb{C}^\times$. Let $\sigma^+$ and $\sigma^-$ be the representations of $\PU_3$ introduced in Definition~\ref{big theta lift howe ps} with respect to these data, and let $\pi^+$ and $\pi^-$ be their corresponding small theta lifts to the group $G$, as introduced in Definition~\ref{definition theta lift g2}.
In Section~\ref{subsec: see saw argument}, we use a see-saw argument to verify that $\sigma^+$, and, under the condition that $\chi^2 \neq 1$, also $\sigma^-$, satisfy the hypothesis of the non-vanishing criterion. As a consequence, $\pi^+ \neq 0$, and, if $\chi^2 \neq 1$, also $\pi^- \neq 0$, proving the non-vanishing statements in Theorem~\ref{theorem: pi+ for g2}.
\subsection{The groups $\U(V_2)$ and $\U(V_2')$}\label{subsec: the groups U(V2) and U(V2')} 
Let $V_2$ and $V_2'$ be 2-dimensional Hermitian spaces over $K$ such that $V_2$ is split and $V_2'$ is nonsplit.

Assume that, with respect to some $K$-basis, the Hermitian form on $V_2$ is defined by the Hermitian matrix $\Phi_2\in H_2(K)$. Then we can describe $\U(V_2)$ in the following way:
\[
\U(V_2)=\lbrace g\in \GL_2(K) \;\colon\; g\Phi_2g^\dagger=\Phi_2\rbrace.
\]
Using this description, the group $\U(V_2)$ acts on the 4-dimensional $F$-space of Hermitian $2\times 2$ matrices $H_2(K)$ by the rule
\[
(g,A)\mapsto gAg^\dagger \quad \text{for }g\in \U(V_2)\text{ and } A\in H_2(K),
\]
and this action preserves the quadratic form defined on $H_2(K)$ by the determinant map. Moreover, it stabilizes the 3-dimensional $F$-subspace
\[
Q=\lbrace A\in H_2(K)\;\colon\; \Tr(A\Phi_2^{-1})=0 \rbrace.
\]
This action defines a homomorphism from $\U(V_2)$ to $\SO(Q)$, the $F$-points of the special orthogonal group associated with the 3-dimensional quadratic $F$-space $Q$. The kernel of this homomorphism is the center $Z(\U(V_2))$ of $\U(V_2)$. This allows us to identify $\PU_2(V_2)=\U(V_2)/Z(\U(V_2))$ with an index-2 subgroup of $\SO(Q)$, which we will denote by $\SO(Q)^+$. Actually, one can show that $\SO(Q)^+$ consists of the elements in $\SO(Q)$ whose spinor norm belongs to $\norm_{K/F}(K^\times)/(F^\times)^2\subset F^\times/(F^\times)^2$. 

Similarly, assume that, with respect to some $K$-basis, the Hermitian form on $V_2'$ is defined by the Hermitian matrix $\Phi_2'\in H_2(K)$. Then we can describe $\U(V_2')$ in the following way:
\[
\U(V_2')=\lbrace g\in \GL_2(K) \;\colon\; g\Phi_2'g^\dagger=\Phi_2'\rbrace.
\]
Using this description, the group $\U(V_2')$ acts on the 4-dimensional $F$-space of Hermitian $2\times 2$ matrices $H_2(K)$ by the rule
\[
(g,A)\mapsto gAg^\dagger \quad \text{for }g\in \U(V_2')\text{ and } A\in H_2(K),
\]
and this action preserves the quadratic form defined on $H_2(K)$ by the determinant map. Moreover, it stabilizes the 3-dimensional $F$-subspace
\[
Q'=\lbrace A\in H_2(K)\;\colon\; \Tr(A\Phi_2'^{-1})=0 \rbrace.
\]
This action defines a homomorphism from $\U(V_2')$ to $\SO(Q')$, the $F$-points of the special orthogonal group associated with the 3-dimensional quadratic $F$-space $Q'$. The kernel of this homomorphism is the center $Z(\U(V_2'))$ of $\U(V_2')$. This allows us to identify $\U(V_2')/Z(\U(V_2'))$ with an index-2 subgroup of $\SO(Q')$, which we will denote by $\SO(Q')^+$. Actually, one can show that $\SO(Q')^+$ consists of the elements in $\SO(Q')$ whose spinor norm belongs to $\norm_{K/F}(K^\times)/(F^\times)^2\subset F^\times/(F^\times)^2$.

\subsection{An orbit problem}\label{subsec: an orbit problem}

We choose the following description of $\U_3$:
\[
\U_3=\lbrace g\in\GL_3(K)\;\colon\; g\Phi_3g^\dagger=\Phi_3\rbrace,
\]
where
\[
\Phi_3 = 
\begin{pmatrix}
0 & 0 & -1 \\
0 & -1 & 0 \\
-1 & 0 & 0
\end{pmatrix},
\]
and we have $\PU_3=\U_3/Z(\U_3)=\U_3/\U_1$. Let $J=H_3(K)$, and
\[
J_1=\lbrace g\in J\;\colon\; \Tr(g\Phi_3^{-1})=1\rbrace.
\]
Let $\Omega_2$ be defined as in \cite[\S2.1]{BS}. Then, as in [\emph{op.\,cit.}, \S3.1.1], we have
\[
\Omega_2\cap J_1=\lbrace g\in J\;\colon\; \Tr(g\Phi_3^{-1})=1 \text{ and } \rank(g)=1\rbrace.
\]
The group $\U_3$ acts on $\Omega_2\cap J_1$ by the rule
\[
(g,A)\mapsto gAg^\dagger \quad \text{for }g\in \U_3\text{ and } A\in \Omega_2\cap J_1,
\]
and this descends to an action of $\PU_3$ on $\Omega_2\cap J_1$. Any matrix $A\in \Omega_2\cap J_1$ can be written as
\[
A=-\lambda u^\dagger u \quad\text{where } u\in M_{1 \times 3}(K), \,\lambda \in F^\times \text{ and } -\lambda u\Phi_3 u^\dagger = 1.
\]
Observe that, if $\lambda\in \norm_{K/F}(K^\times)$, then we can modify $u$ to account for this factor. Fix an element $\lambda_0\in F^\times$ which is not the norm of an element in $K^\times$. Then, we can decompose the set $\Omega_2\cap J_1$ into two subsets:
\begin{align*}
(\Omega_2\cap J_1)_0 &=\lbrace -u^\dagger u \;\colon\; u\in M_{1\times 3}(K),\, -u\Phi_3 u^\dagger=1\rbrace \\
(\Omega_2\cap J_1)_1 &=\lbrace -\lambda_0 u^\dagger u \;\colon\; u\in M_{1\times 3}(K),\, -\lambda_0 u\Phi_3u^\dagger=1\rbrace.
\end{align*}
We claim that these are the orbits for the action of $\PU_3$ on $\Omega_2\cap J_1$. Indeed, this follows from the observation that, if $-u^\dagger u\in (\Omega_2\cap J_1)_0$, then the subspace $\langle u\rangle^\perp$ orthogonal to $u$ with respect to $\Phi_3$ is a split 2-dimensional Hermitian $K$-space, and, if $-\lambda_0 uu^\dagger\in(\Omega_2\cap J_1)_1$, then $\langle u\rangle^\perp$ is a nonsplit 2-dimensional Hermitian $K$-space.

For the orbit $(\Omega_2\cap J_1)_0$, let $u = (0,1,0)$ and choose the element
\[
f_0=-u^\dagger u=\begin{pmatrix} 0 & 0 & 0 \\ 0 & -1 & 0 \\ 0 & 0 & 0 \end{pmatrix}.
\]
Its stabilizer in $\PU_3$ is the subgroup
\begin{equation*}
\left\lbrace \begin{pmatrix} \ast & 0 & \ast \\ 0 & 1 & 0 \\ \ast & 0 & \ast  \end{pmatrix} \in\PU_3 \right\rbrace
\simeq \left\lbrace g\in\GL_2(K)\;\colon\; g\begin{pmatrix} 0 & -1 \\ -1 & 0 \end{pmatrix}g^\dagger = \begin{pmatrix} 0 & -1 \\ -1 & 0 \end{pmatrix}\right\rbrace  \simeq \U(V_2).
\end{equation*}
For the last identification, let $V_2$ be the 2-dimensional $K$-subspace spanned by $u_1=(1,0,0)$ and $u_3=(0,0,1)$, equipped with the Hermitian form given in this basis by
\[
\Phi_2=
\begin{pmatrix}
0 & -1 \\
-1 & 0
\end{pmatrix}.
\]

For the orbit $(\Omega_2\cap J_1)_1$, let $u'=(1,0,1/(2\lambda_0))$ and choose the element
\[
f_1 = -\lambda_0 (u')^\dagger u' = \begin{pmatrix} -\lambda_0 & 0 & -1/2 \\ 0 & 0 & 0 \\ -1/2 & 0 & -1/(4\lambda_0) \end{pmatrix}.
\]
Let $u_1'=u'$, $u_2' = (0,1,0)$ and $u_3'=(1,0,-1/(2\lambda_0))$. Then, with respect to this basis,
\[
f_1=\begin{pmatrix} -\lambda_0 & 0 & 0 \\ 0 & 0 & 0 \\ 0 & 0 & 0 \end{pmatrix}\quad\text{and}\quad \Phi_3=\begin{pmatrix} -1/\lambda_0 & 0 & 0 \\ 0 & -1 & 0 \\ 0 & 0 & 1/\lambda_0 \end{pmatrix},
\]
and therefore the stabilizer of $f_1$ in $\PU_3$ is given by
\begin{equation*}
    \left\lbrace \begin{pmatrix} 1 & 0 & 0 \\ 0 & \ast & \ast \\ 0 & \ast & \ast  \end{pmatrix} \in\PU_3 \right\rbrace  \simeq \left\lbrace g\in\GL_2(K)\;\colon\; g\begin{pmatrix} -1 & 0 \\ 0 & 1/\lambda_0 \end{pmatrix}g^\dagger = \begin{pmatrix} -1 & 0 \\ 0 & 1/\lambda_0 \end{pmatrix}\right\rbrace \simeq \U(V_2').
\end{equation*}
For the last identification, let $V_2'$ be the 2-dimensional $K$-subspace spanned by $u_2'$ and $u_3'$, equipped with the Hermitian form given in this basis by
\[
\Phi_2'=
\begin{pmatrix}
-1 & 0 \\
0 & 1/\lambda_0
\end{pmatrix}.
\]

As in \cite[\S3.1.1]{BS}, to the element $f_0$ we attach the subspace
\[
\Delta_0^\perp = \left\lbrace\begin{pmatrix} a & 0 & y \\ 0 & 0 & 0 \\ \bar{y} & 0 & c \end{pmatrix}\in J\right\rbrace\simeq H_2(K).
\]
This subspace is invariant by the action of the stabilizer of $f_0$ in $\PU_3$, and, under its identification with $\U(V_2)$, the action of the stabilizer of $f_0$ on $\Delta_0^\perp$ agrees with the action described in the previous subsection. It preserves the determinant form on $H_2(K)$ and the decomposition $\Delta_0^\perp\simeq H_2(K)=Q\oplus Z$, with $Q$ defined as above and $Z$ denoting the 1-dimensional subspace of $H_2(K)$ generated by $\Phi_2$. The group $\U(V_2)$ acts trivially on $Z$, whereas, as described above, its action on $Q$ identifies
$\PU(V_2)=\U(V_2)/Z(\U(V_2))$ with an index-2 subgroup of $\SO(Q)$, which we denote by $\SO(Q)^+$.

Similarly, to the element $f_1$ we attach the subspace $\Delta_1^\perp\subseteq J$, which, with respect to the basis $u_1',u_2',u_3'$ defined above, is given by
\[
\Delta_1^\perp = \left\lbrace\begin{pmatrix} 0 & 0 & 0 \\ 0 & a & y \\ 0 & \bar{y} & c \end{pmatrix}\in J\right\rbrace \simeq H_2(K).
\]
This subspace is invariant by the action of the stabilizer of $f_1$ in $\PU_3$, and, under its identification with $\U(V_2')$, the action of the stabilizer of $f_1$ on $\Delta_1^\perp$ agrees with the action described in the previous subsection. It preserves the determinant form on $H_2(K)$ and the decomposition $\Delta_1^\perp\simeq H_2(K)=Q'\oplus Z'$, where $Q'$ is defined as above and $Z'$ denotes the 1-dimensional subspace of $H_2(K)$ generated by $\Phi_2'$. The group $\U(V_2')$ acts trivially on $Z'$, whereas, as described above, its action on $Q'$ identifies
$\PU(V_2')=\U(V_2')/Z(\U(V_2'))$ with an index-2 subgroup of $\SO(Q')$, which we denote by $\SO(Q')^+$.

\subsection{Non-vanishing criterion for theta lifts}\label{subsec: non vanishing of theta lifts}

Let $\Gamma$ denote a 2-dimensional symplectic space. Then $\Gamma\otimes_F \Delta_0^\perp$ and $\Gamma\otimes_F\Delta_1^\perp$ are 8-dimensional symplectic spaces. Let $\omega_0$ and $\omega_1$ be the Weil representations associated with $\Gamma\otimes_F \Delta_0^\perp$ and $\Gamma\otimes_F\Delta_1^\perp$, respectively, for a fixed nontrivial character $\psi':F\rightarrow \bb{C}^\times$. Let $\omega_{0,Q}$, $\omega_{0,Z}$, $\omega_{1,Q'}$ and $\omega_{1,Z'}$ be the Weil representations associated with the symplectic spaces $\Gamma\otimes_F Q$, $\Gamma\otimes_F Z$, $\Gamma\otimes_F Q'$ and $\Gamma\otimes_F Z'$, respectively, for the character $\psi'$. Then, under the natural homomorphism
\[
\Mp(\Gamma\otimes_F Q)\times \Mp(\Gamma\otimes_F Z)\longrightarrow \Mp(\Gamma\otimes_F \Delta_0^\perp),
\]
the representation $\omega_0$ pulls back to $\omega_{0,Q}\boxtimes \omega_{0,Z}$, and, under the homomorphism
\[
\Mp(\Gamma\otimes_F Q')\times \Mp(\Gamma\otimes_F Z')\longrightarrow \Mp(\Gamma\otimes_F \Delta_1^\perp),
\]
the representation $\omega_1$ pulls back to $\omega_{1,Q'}\boxtimes \omega_{1,Z'}$.

Recall that, in Section~\ref{group g2 parabolics}, we defined $Q_1$ as the three-step parabolic of $G$. It has a Levi decomposition $Q_1=L_1U_1$, with Levi subgroup $L_1\simeq \GL_2$ and unipotent subgroup $U_1$ equipped with a three-step filtration $U_1=U_1(1)\supset U_1(2)\supset U_1(3)\supset U_1(4)= 1$ described in \cite[\S3.1]{BS}. In particular, the quotient $U_1/U_1(3)$ is isomorphic to the Heisenberg group associated with a two-dimensional symplectic space. This yields a natural action of $U_1/U_1(3)$ on $\omega_{0,Z}$ and $\omega_{1,Z'}$ via the Heisenberg representation. From now on, we consider the action of $U_1/U_1(3)$ on $\omega_0=\omega_{0,Q}\boxtimes \omega_{0,Z}$ and $\omega_1=\omega_{1,Q'}\boxtimes \omega_{1,Z'}$ given by the trivial representation on the first factors and the Heisenberg representation on the second factors.

There is also a natural action of the metaplectic group $\Mp_2$ on each of the representations $\omega_{0,Q}$, $\omega_{0,Z}$, $\omega_{1,Q'}$ and $\omega_{1,Z'}$. Moreover, the resulting action of $\Mp_2$ on $\omega_0=\omega_{0,Q}\boxtimes \omega_{0,Z}$ and $\omega_1=\omega_{1,Q'}\boxtimes \omega_{1,Z'}$ factors through $\SL_2$. From now on, we let $\SL_2\subset L_1$ act on $\omega_0$ and $\omega_1$ via this action.

Finally, there is a natural action of $\SO(Q)$ (resp. $\SO(Q')$) on $\omega_{0,Q}$ (resp. $\omega_{1,Q'}$), and we let $\U(V_2)$ (resp. $\U(V_2')$) act on $\omega_0=\omega_{0,Q}\boxtimes \omega_{0,Z}$ (resp. $\omega_1=\omega_{1,Q'}\boxtimes \omega_{1,Z'}$) via the embedding $\U(V_2)/Z(\U(V_2))\xhookrightarrow{} \SO(Q)$ (resp. $\U(V_2')/Z(\U(V_2'))\xhookrightarrow{} \SO(Q')$).

It follows from \cite[\S3.1.1]{BS} that, as a representation of $(\SL_2\ltimes U_1/U_1(3))\times\PU_3$,
\[
\Pi_{U_1(2),\psi'}=\mathrm{c}\mbox{-}{\Ind}_{\U(V_2)}^{\PU_3} \omega_0\oplus \mathrm{c}\mbox{-}{\Ind}_{\U(V_2')}^{\PU_3} \omega_1.
\]

Now we use this description of $\Pi_{U_1(2),\psi'}$ to prove the following result.

\begin{theorem}\label{thm: nonvanishing}
Let $\tau$ be a smooth irreducible representation of $\PU_3$ and let $\tau^\vee$ be its contragredient representation. Assume that either of the following two conditions holds:
\begin{itemize}
    \item The restriction of $\tau^\vee$ to $\U(V_2)$ has an irreducible quotient with trivial central character.
    \item The restriction of $\tau^\vee$ to $\U(V_2')$ has an irreducible quotient with trivial central character.
\end{itemize}
Then, the theta lift $\Theta_{\PU_3}(\tau)$ is nonzero.
\end{theorem}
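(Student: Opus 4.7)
The proof will follow a Fourier--Jacobi coefficient computation. Since $\Pi_{U_1(2),\psi'}$ is a $\PU_3$-quotient of the minimal representation $\Pi$, any nonzero $\PU_3$-map from $\Pi_{U_1(2),\psi'}$ to $\tau$ lifts to a nonzero map $\Pi \to \tau$, and therefore witnesses $\Theta_{\PU_3}(\tau)\neq 0$. Applying Frobenius reciprocity to the explicit decomposition
\[
\Pi_{U_1(2),\psi'}=\mathrm{c}\mbox{-}{\Ind}_{\U(V_2)}^{\PU_3} \omega_0\oplus \mathrm{c}\mbox{-}{\Ind}_{\U(V_2')}^{\PU_3} \omega_1,
\]
we obtain
\[
\Hom_{\PU_3}(\Pi_{U_1(2),\psi'}, \tau) \cong \Hom_{\U(V_2)}(\omega_0, \tau|_{\U(V_2)}) \oplus \Hom_{\U(V_2')}(\omega_1, \tau|_{\U(V_2')}),
\]
so it suffices to exhibit a nonzero element in one of the two summands.

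Assume, without loss of generality, that the first condition in the theorem holds. Then there is an irreducible quotient $\pi$ of $\tau^\vee|_{\U(V_2)}$ with trivial central character. Passing to contragredients yields an irreducible subrepresentation $\pi^\vee \hookrightarrow \tau|_{\U(V_2)}$ on which the center of $\U(V_2)$ acts trivially, so it is enough to produce a nonzero intertwiner $\omega_0 \to \pi^\vee$. Using the decomposition $\omega_0\simeq \omega_{0,Q}\boxtimes \omega_{0,Z}$ and the fact that $\U(V_2)$ acts trivially on the second factor, this reduces further to finding a nonzero $\U(V_2)$-equivariant map $\omega_{0,Q}\to \pi^\vee$.

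The main step is the construction of this intertwiner. My plan is to realize $\omega_{0,Q}$ on a Schwartz model $\mathcal{S}(Q)$ on which $\U(V_2)$ acts through the quotient $\U(V_2)/Z(\U(V_2))\simeq \SO(Q)^+$ embedded inside $\SO(Q)$; this is consistent with the trivial central character condition on $\pi^\vee$. Stratifying $Q$ by its $\U(V_2)/Z(\U(V_2))$-orbits --- the origin, the punctured null cone, and the nonzero level sets of the quadratic form on $Q$ --- produces a filtration of $\mathcal{S}(Q)$ whose subquotients are compactly induced representations from the orbit stabilizers. By Frobenius reciprocity, it then suffices to show that some orbit stabilizer fixes a nonzero vector of $\pi^\vee$. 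The main obstacle is executing this final step uniformly in $\pi^\vee$: the argument must exploit the trivial central character condition together with the structure of $\U(V_2)/Z(\U(V_2))$ as a form of $\PGL_2$ to force an appropriate toric or parabolic period of $\pi^\vee$ to be nonzero, and one must verify that this period genuinely contributes through a specific orbit in $Q$ rather than being absorbed into an extension between orbit layers of $\mathcal{S}(Q)$.
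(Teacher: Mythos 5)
Your proposal follows the paper's skeleton—reduce to showing $\Hom_{\PU_3}(\Pi_{U_1(2),\psi'},\tau)\neq 0$ and use the decomposition $\Pi_{U_1(2),\psi'}=\mathrm{c}\mbox{-}\Ind_{\U(V_2)}^{\PU_3}\omega_0\oplus\mathrm{c}\mbox{-}\Ind_{\U(V_2')}^{\PU_3}\omega_1$—but the heart of the argument is missing. The step you yourself flag as "the main obstacle," namely producing a nonzero $\U(V_2)$-equivariant map $\omega_{0,Q}\to\pi^\vee$ uniformly in the target by stratifying $\mathcal{S}(Q)$ along $\SO(Q)^+$-orbits, is precisely the mathematical content of the theorem, and your sketch does not deliver it. Orbit by orbit, what such an analysis controls are $\mathrm{Stab}$-invariant functionals (periods/coinvariants), not fixed vectors as you write, and the nonvanishing of a toric period of $\pi^\vee$ for the particular stabilizer attached to a fixed level set of the quadratic form is false in general (Tunnell--Saito/Waldspurger dichotomy); one must also control the null cone stratum and the extensions between strata. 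The paper bypasses all of this by invoking the standard fact that \emph{every} irreducible smooth representation of $\SO(Q)$ (hence of $\SO(Q)^+$) occurs as a quotient of the Weil representation $\omega_{0,Q}$ of the three-dimensional quadratic space (nonvanishing of theta from $\SO_3$-forms to $\widetilde{\SL}_2$, available e.g.\ from the conservation relation): then $\rho^\vee$ is a $\U(V_2)$-quotient of $\omega_0$, so $\mathrm{c}\mbox{-}\Ind_{\U(V_2)}^{\PU_3}\rho^\vee$ is a quotient of $\mathrm{c}\mbox{-}\Ind_{\U(V_2)}^{\PU_3}\omega_0$, and it only remains to see that $\tau$ is a quotient of $\mathrm{c}\mbox{-}\Ind_{\U(V_2)}^{\PU_3}\rho^\vee$. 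If you want to keep an orbit-theoretic route, you would in effect be reproving this known input, and you would need Waldspurger-level period results to do so.

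Two of your intermediate steps are also not justified as stated. First, the "Frobenius reciprocity" $\Hom_{\PU_3}(\mathrm{c}\mbox{-}\Ind_{\U(V_2)}^{\PU_3}\omega_0,\tau)\cong\Hom_{\U(V_2)}(\omega_0,\tau|_{\U(V_2)})$ is not valid: compact induction from a closed, non-open subgroup is not left adjoint to restriction. The correct tool is the duality $(\mathrm{c}\mbox{-}\Ind_H^G\sigma)^\vee\simeq\Ind_H^G(\sigma^\vee)$ (both groups being unimodular) together with genuine Frobenius reciprocity for full smooth induction, which is exactly how the paper obtains $\Hom_{\PU_3}(\mathrm{c}\mbox{-}\Ind_{\U(V_2)}^{\PU_3}\rho^\vee,\tau)\cong\Hom_{\U(V_2)}(\tau^\vee,\rho)\neq 0$; the one implication you actually need (a nonzero element of $\Hom_{\U(V_2)}(\omega_0,\tau|_{\U(V_2)})$ yields a nonzero $\PU_3$-map) can be rescued by integrating over $\U(V_2)\backslash\PU_3$, but this must be argued, not quoted. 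Second, from an irreducible quotient $\pi$ of $\tau^\vee|_{\U(V_2)}$ you cannot conclude an embedding $\pi^\vee\hookrightarrow\tau|_{\U(V_2)}$: dualizing gives an embedding into $(\tau^\vee|_{\U(V_2)})^\vee$, whose $\U(V_2)$-smooth dual is in general strictly larger than $\tau|_{\U(V_2)}$ (admissibility over $\PU_3$ does not give admissibility over $\U(V_2)$). The paper's proof is arranged so that only quotients ever appear—$\omega_0\twoheadrightarrow\rho^\vee$, then compact induction, then the duality above—which is why neither difficulty arises there.
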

\begin{proof}
To prove the non-vanishing of the theta lift, it suffices to prove that the space
\[
\Hom_{\PU_3}(\Pi_{U_1(2),\psi'},\tau)=\Hom_{\PU_3}(\mathrm{c}\mbox{-}{\Ind}_{\U(V_2)}^{\PU_3} \omega_0,\tau)\oplus \Hom_{\PU_3}(\mathrm{c}\mbox{-}{\Ind}_{\U(V_2')}^{\PU_3} \omega_0,\tau)
\]
is nontrivial.

Assume that the restriction of $\tau^\vee$ to $\U(V_2)$ has an irreducible quotient $\rho$ with trivial central character. Let $\rho^\vee$ be the contragredient of $\rho$ as a smooth representation of $\U(V_2)$. Note that $\rho^\vee$ can be regarded as a representation of $\PU(V_2)\simeq\SO(Q)^+$. Recall that $\omega_0=\omega_{0,Q}\boxtimes \omega_{0,Z}$, and $\U(V_2)$ acts on this space via the action of $\SO(Q)^+$ on the factor $\omega_{0,Q}$. Since any irreducible representation of $\SO(Q)$ (and \emph{a fortiori} of $\SO(Q)^+$) appears as a quotient of the Weil representation $\omega_{0,Q}$, so does the representation $\rho^\vee$. Hence, we can regard $\rho^\vee$ as a $\U(V_2)$-quotient of $\omega_0$, and, thus, we can also regard $\mathrm{c}\mbox{-}{\Ind}_{\U(V_2)}^{\PU_3} \rho^\vee$ as a $\PU_3$-quotient of $\mathrm{c}\mbox{-}{\Ind}_{\U(V_2)}^{\PU_3} \omega_0$. Therefore, to conclude the proof in this case we just need to show that $\tau$ is a $\PU_3$-quotient of $\mathrm{c}\mbox{-}{\Ind}_{\U(V_2)}^{\PU_3} \rho^\vee$, i.e.
\[
\Hom_{\PU_3}(\mathrm{c}\mbox{-}{\Ind}_{\U(V_2)}^{\PU_3} \rho^\vee,\tau)\neq 0.
\]
But, dualizing the left-hand side of the previous expression and applying Frobenius reciprocity, it becomes
\[
\Hom_{\U(V_2)}(\tau^\vee,\rho),
\]
which is nonzero since $\rho$ is a $\U(V_2)$-quotient of $\tau^\vee$.

The case in which the restriction of $\tau^\vee$ to $\U(V_2')$ has an irreducible quotient with trivial central character can be dealt with in a similar way.

\end{proof}

\subsection{A see-saw argument}\label{subsec: see saw argument}

In this subsection, we use a see-saw argument to prove that the contragredient of the representation $\sigma^+$ of $\U_3$ defined above, when restricted to a representation of a suitable 2-variable unitary group, has a quotient with trivial central character. We also prove this for the contragredient of $\sigma^-$ under the assumption that $\chi^2\neq 1$. Combined with the results in the previous subsection, this will conclude the proof of the non-vanishing of the corresponding theta lifts to the group $G$.

Recall that $\chi$ is a conjugate-symplectic character of $K^\times$. In general, if $W$ is an $m$-dimensional skew-Hermitian space over $K$ and $V$ is an $n$-dimensional Hermitian space over $K$, the choice of characters $\chi_W=\chi^m$ and $\chi_V=\chi^n$ determines a lift $\tilde{\iota}_{\chi,\psi}:\U(V)\times \U(W)\rightarrow \mathrm{Mp}(V\otimes_K W)$ of the natural homomorphism $\iota:\U(V)\times \U(W)\rightarrow \mathrm{Sp}(V\otimes_K W)$, and we can use this lift to define $\Omega_{V,W}:=\tilde{\iota}_{\chi,\psi}^\ast \omega_\psi$. Throughout this subsection, these are the representations that we use to define theta lifts between different unitary groups.

Let $V_3$ be a 3-dimensional Hermitian space over $K$. Assume that its Hermitian form, with respect to some $K$-basis, is defined by the matrix
\[
\Phi_3 = 
\begin{pmatrix}
0 & 0 & -1 \\
0 & -1 & 0 \\
-1 & 0 & 0
\end{pmatrix}.
\]
As before, let $V_2$ and $V_2'$ be 2-dimensional Hermitian spaces such that $V_2$ is split and $V_2'$ is nonsplit. Then, we can write
\begin{align*}
    V_3\coloneqq V_1\oplus V_2 \cong V_1'\oplus V_2',
\end{align*}
where $V_1$ and $V_1'$ are non-isomorphic 1-dimensional Hermitian spaces over $K$. Let $W$ and $W'$ be 1-dimensional skew-Hermitian spaces over $K$ such that $\Theta_{V_1,W}(\mathbb{1}_W)\neq 0$ and $\Theta_{V_1',W'}(\mathbb{1}_{W'})\neq 0$. Observe that, by dichotomy, the spaces $W$ and $W'$ are not isomorphic. More precisely, we have that
\[
\epsilon(V_1)\epsilon(W)=\epsilon_K(1/2,\chi,\psi(\mathrm{Tr}_{K/F}(-\delta(\cdot)))=\epsilon(V_1')\epsilon(W'),
\]
where $\delta\in K^\times$ denotes a trace-zero element which we also use to define the signs $\epsilon(W)$ and $\epsilon(W')$ as in \eqref{epsilon w}.

We have that $\Theta_{V_3,W}(\mathbb{1}_W)$ and $\Theta_{V_3,W'}(\mathbb{1}_{W'})$ are both nonzero, because we are in the stable range. Furthermore, it follows from Proposition~\ref{thetaliftinWitttower} and Theorem~\ref{HarrisKudlaSweet} that
\begin{align*}
\Theta_{V_3,W}(\mathbb{1}_W) \text{ is non-tempered } & \Longleftrightarrow \epsilon_K(1/2,\chi,\psi(\mathrm{Tr}_{K/F}(-\delta(\cdot))) \epsilon_K(1/2,\chi^3,\psi(\mathrm{Tr}_{K/F}(-\delta(\cdot)))=1  \\ & \Longleftrightarrow \Theta_{V_3,W'}(\mathbb{1}_{W'}) \text{ is supercuspidal.}
\end{align*}
In particular, when $\chi^2=1$, the statements above always hold.

We define $\sigma=\Theta_{V_3,W}(\mathbb{1}_W)$ and $\sigma'=\Theta_{V_3,W'}(\mathbb{1}_{W'})$. Then, we have that $\lbrace \sigma,\sigma'\rbrace =\lbrace \sigma^+,\sigma^-\rbrace$, and, if $\chi^2=1$, then $\sigma=\sigma^+$ and $\sigma'=\sigma^-$.

\begin{proposition}\label{prop: 6.3}
With the previous definitions, it always holds that $\Theta_{V_2,W}(\mathbb{1})\neq 0$, and
 \begin{align*}
  \Theta_{V_2',W}(\mathbb{1}_W)\neq 0 \text{ if and only if } \chi^2\neq 1.
 \end{align*}
These statements also hold if we replace $W$ by $W'$.
\end{proposition}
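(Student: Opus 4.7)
The plan is to apply the conservation relation, Theorem~\ref{thm: conservation law}, to the trivial character $\mathbb{1}_W$ of $\U(W)\cong K^1$ in the two Witt towers of $2$-dimensional Hermitian spaces: $\cl{W}_0'\ni V_2$ (beginning at dimension $0$) and $\cl{W}_0\ni V_2'$ (beginning at dimension $2$). Since $\chi$ is conjugate-symplectic, $\chi^2$ is trivial on $F^\times$ and is therefore an admissible choice of $\chi_{\text{even}}$. This choice matches the Section~\ref{subsection: Theta correspondence for U1 times U3} convention $\chi_V=\chi^n$ precisely at $V=V_2$ and $V=V_2'$, so that the non-vanishing of the theta lifts $\Theta_{V_2,W}(\mathbb{1}_W)$ and $\Theta_{V_2',W}(\mathbb{1}_W)$ appearing in the statement is exactly the information encoded by the first-occurrence functions $n_{\cl{W}_0'}$ and $n_{\cl{W}_0}$ for this choice.

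The key observation is that, with $\chi_{\text{even}}=\chi^2$, the boundary convention at $V=0$ declares $\Theta_{0,W}(\pi)$ to be nonzero precisely when $\pi=\chi^2\circ i\circ\det_W$. Under the identification $\U(W)\cong K^1$ induced by $\det_W$, and since $i\colon K^1\to K^\times/F^\times$ is an isomorphism, this character of $\U(W)$ coincides with the trivial character $\mathbb{1}_W$ if and only if $\chi^2=1$ as a character of $K^\times$. This dichotomy will drive the two cases in the conclusion.

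If $\chi^2 = 1$, then $n_{\cl{W}_0'}(\mathbb{1}_W) = 0$, and the conservation identity $n_{\cl{W}_0'}(\mathbb{1}_W)+n_{\cl{W}_0}(\mathbb{1}_W)=4$ forces $n_{\cl{W}_0}(\mathbb{1}_W)=4$. The ``moreover'' part of Theorem~\ref{thm: conservation law} then yields $\Theta_{V_2,W}(\mathbb{1}_W)\neq 0$ (since $\dim V_2=2\geq 0$) and $\Theta_{V_2',W}(\mathbb{1}_W)=0$ (since $\dim V_2'=2<4$). If instead $\chi^2\neq 1$, the boundary convention forces $n_{\cl{W}_0'}(\mathbb{1}_W)\geq 2$, and since $\cl{W}_0$ starts at dimension $2$ we automatically have $n_{\cl{W}_0}(\mathbb{1}_W)\geq 2$. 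The conservation identity then constrains both first occurrences to equal exactly $2$, so both $\Theta_{V_2,W}(\mathbb{1}_W)$ and $\Theta_{V_2',W}(\mathbb{1}_W)$ are nonzero.

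The analogous statement with $W'$ in place of $W$ follows by the same argument, since $\U(W')\cong K^1$ via $\det_{W'}$ and the triviality of $\chi^2\circ i\circ\det_{W'}$ is again equivalent to $\chi^2=1$. I expect the main subtle point in the plan to be the careful matching of the Section~\ref{subsection: Theta correspondence for U1 times U3} convention ($\chi_V=\chi^n$ for varying $V$) with the uniform-over-the-tower choice $\chi_{\text{even}}=\chi^2$ required to invoke Theorem~\ref{thm: conservation law}; this agreement at $V_2$ and $V_2'$ is what allows conservation to conclude the precise non-vanishing statements in the proposition.
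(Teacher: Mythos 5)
Your proof is correct and follows essentially the same route as the paper: the paper's own argument is precisely that $V_2$ sits in the Witt tower beginning at dimension $0$, that $\mathbb{1}_W$ lifts in dimension zero if and only if $\chi^2=1$, and that the conservation relation of Theorem~\ref{thm: conservation law} then forces the stated dichotomy. Your write-up merely makes explicit the case analysis and the compatibility of the splitting character $\chi_{V}=\chi^2$ with the uniform choice $\chi_{\text{even}}$, both of which the paper leaves implicit.
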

\begin{proof}
Observe that $V_2'$ is the first element in its Witt tower, whereas $V_2$ is the second element in its Witt tower, the first one being a zero-dimensional Hermitian space. The trivial character on $\U(W)$ (or on $\U(W')$) is said to lift in dimension zero if and only if $\chi^2=1$. Therefore the result follows from the conservation relation in Theorem \ref{thm: conservation law}.
\end{proof}

We denote by $\sigma^\vee$ and $(\sigma')^\vee$ the contragredient of the smooth $\PU_3$-representations $\sigma$ and $\sigma'$, respectively.

\begin{proposition}\label{prop:subrep with trivial central char}
    The restriction of the representation $\sigma^\vee$ to $\U(V_2)$ has a unique irreducible quotient with trivial central character. 
\end{proposition}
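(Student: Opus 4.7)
The plan is to describe $\sigma^\vee|_{\U(V_1)\times \U(V_2)}$ explicitly via a see-saw argument and then appeal to Howe duality for the dual pair $(\U(V_2),\U(W^{-1}))$.

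Recall from the proof of Proposition~\ref{prop: irreducibility of induction to G'} that the dual of the Weil representation satisfies $\overline{\Omega_{V_3,W,\chi,\psi}}\simeq \Omega_{V_3,W^{-1},\chi^{-1},\psi}$, so that $\sigma^\vee\simeq \Theta_{V_3,W^{-1},\chi^{-1},\psi}(\mathbb{1}_{W^{-1}})$ is obtained as the $\U(W^{-1})$-coinvariants of $\Omega_{V_3,W^{-1},\chi^{-1},\psi}$. Since $\U(W^{-1})\simeq K^1$ is compact, these coincide with $\U(W^{-1})$-invariants. Using the factorization
\[
\Omega_{V_3,W^{-1},\chi^{-1},\psi}\;\simeq\;\Omega_{V_1,W^{-1},\chi^{-1},\psi}\otimes \Omega_{V_2,W^{-1},\chi^{-1},\psi}
\]
as a representation of $\U(V_1)\times\U(V_2)\times\Delta\U(W^{-1})$, and decomposing each factor into $\U(W^{-1})$-isotypic components, one obtains the see-saw identity
\[
\sigma^\vee|_{\U(V_1)\times\U(V_2)}\;\simeq\;\bigoplus_{\mu\in\widehat{K^1}}\Theta_{V_1,W^{-1},\chi^{-1},\psi}(\mu)\boxtimes \Theta_{V_2,W^{-1},\chi^{-1},\psi}(\mu^{-1}).
\]
Since $\U(V_1)\simeq K^1$ is compact abelian and the theta correspondence preserves central characters (thanks to our matched choice of splitting characters), each $\Theta_{V_1,W^{-1},\chi^{-1},\psi}(\mu)$ is either zero or equal to the character $\mu$ of $K^1$ itself.

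By preservation of central characters, the $\U(V_2)$-factor $\Theta_{V_2,W^{-1},\chi^{-1},\psi}(\mu^{-1})$ has central character $\mu^{-1}$. Therefore an irreducible quotient $\rho$ of $\sigma^\vee|_{\U(V_2)}$ with trivial central character can only arise as a quotient of $\Theta_{V_2,W^{-1},\chi^{-1},\psi}(\mathbb{1}_{W^{-1}})$, namely the $\U(V_2)$-factor of the $\mu=\mathbb{1}$ summand. This representation is the contragredient of $\Theta_{V_2,W,\chi,\psi}(\mathbb{1}_W)$ and hence nonzero by Proposition~\ref{prop: 6.3}; it has trivial central character by preservation; and by Howe duality for the dual pair $(\U(V_2),\U(W^{-1}))$ it admits a unique irreducible quotient. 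This yields the desired unique irreducible quotient of $\sigma^\vee|_{\U(V_2)}$ with trivial central character, occurring with multiplicity one.

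The main obstacle is the precise identification $\Theta_{V_1,W^{-1},\chi^{-1},\psi}(\mu)=\mu$ whenever nonzero, which rests on the compactness of $K^1$ together with the preservation of central characters under the theta correspondence for the matched splitting characters. Once this is in place, the decomposition above, together with Howe duality and Proposition~\ref{prop: 6.3}, delivers both existence and uniqueness at once.
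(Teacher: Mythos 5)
Your route is a legitimate variant of the paper's argument: both rest on the same see-saw, namely the factorization of the Weil representation over $V_3=V_1\oplus V_2$ against the diagonal copy of the one-variable unitary group. The paper tests $\sigma$ against an arbitrary irreducible $\tau$ of $\U(V_2)$ with trivial central character, computes $\Hom_{\U(V_1)\times\U(V_2)}(\sigma,\mathbb{1}\boxtimes\tau)$ via the see-saw identity, and only at the end passes to $\sigma^\vee$ through $\sigma^\vee\simeq\sigma^c$ (\cite[Lemma~2.1(ii)]{HKS}); you instead work directly with $\sigma^\vee\simeq\Theta_{V_3,W^{-1},\chi^{-1},\psi}(\mathbb{1}_{W^{-1}})$ and, exploiting the compactness of $\U(W^{-1})\simeq K^1$, write down the full isotypic decomposition
\[
\sigma^\vee\vert_{\U(V_1)\times\U(V_2)}\simeq\bigoplus_{\mu\in\widehat{K^1}}\Theta_{V_1,W^{-1},\chi^{-1},\psi}(\mu)\boxtimes\Theta_{V_2,W^{-1},\chi^{-1},\psi}(\mu^{-1}),
\]
which, combined with preservation of central characters and Howe duality, treats uniqueness cleanly and dispenses with the Galois-twist step. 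This part of the argument is sound.

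The existence half, however, has a genuine gap. The $\mu=\mathbb{1}$ summand is the tensor product $\Theta_{V_1,W^{-1},\chi^{-1},\psi}(\mathbb{1})\boxtimes\Theta_{V_2,W^{-1},\chi^{-1},\psi}(\mathbb{1})$, and you only verify the nonvanishing of the second factor (via Proposition~\ref{prop: 6.3}). If the first factor vanished, this summand would simply be absent from $\sigma^\vee\vert_{\U(V_2)}$, and no irreducible quotient with trivial central character would exist, even though $\Theta_{V_2,W^{-1},\chi^{-1},\psi}(\mathbb{1})\neq 0$ as an abstract representation. This is not a vacuous worry: run verbatim with $W$ replaced by $W'$ (that is, for $(\sigma')^\vee$ restricted to $\U(V_2)$), your argument would produce a statement that is false, since $\Theta_{V_1,W',\chi,\psi}(\mathbb{1}_{W'})=0$ by the epsilon-dichotomy of Theorem~\ref{HarrisKudlaSweet}, while Proposition~\ref{prop: 6.3} still gives $\Theta_{V_2,W'}(\mathbb{1}_{W'})\neq 0$. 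The missing input is precisely the defining property of $W$: by its choice, $\Theta_{V_1,W,\chi,\psi}(\mathbb{1}_W)\neq 0$, and since $\Omega_{V_1,W^{-1},\chi^{-1},\psi}\simeq\overline{\Omega_{V_1,W,\chi,\psi}}$ (as in the proof of Proposition~\ref{prop: irreducibility of induction to G'}), it follows that $\Theta_{V_1,W^{-1},\chi^{-1},\psi}(\mathbb{1})\neq 0$ (indeed it equals $\mathbb{1}$), so the $\mu=\mathbb{1}$ summand really occurs; with that one line added, your proof is complete. Relatedly, the step you flag as the main obstacle, the identification $\Theta_{V_1,W^{-1},\chi^{-1},\psi}(\mu)=\mu$ when nonzero, is not the crux: for the restriction to $\U(V_2)$ that factor is only a multiplicity space, and all that matters is for which $\mu$ it is nonzero.
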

\begin{proof}
 Consider the see-saw diagram
 \begin{center}
    \begin{tikzcd}
        \U(V_3) \arrow[dash,rd]   & \U(W)\times \U(W) \arrow[dash,ld] \\
        \U(V_1)\times \U(V_2) & \Delta \U(W)
    \end{tikzcd}
\end{center}
and recall that we choose $\chi_{V_i}=\chi^i$ and $\chi_{W}=\chi$ as splitting characters. This choice is compatible with respect to the see-saw diagram above. Let $G_1=\mathrm{U}(V_1)\times \mathrm{U}(V_2)$ and $H_1=\Delta \mathrm{U}(W)$. Let $\tau$ be an irreducible representation of $\U(V_2)$ with trivial central character. Then, we have
\begin{align*}
    \Hom_{G_1}\left(\sigma, \mathbb{1}_{\U(V_1)}\boxtimes\tau\right)
    &\cong \Hom_{G_1\times H_1}\left(\Omega, \mathbb{1}_{\U(V_1)}\boxtimes \tau \boxtimes \mathbb{1}_{\U(W)}\right) \\
    &\cong \Hom_{H_{1}}\left(\mathbb{1}_{\U(W)}\otimes \Theta_{V_2,W}(\tau),\mathbb{1}_{\U(W)}\right).
\end{align*}
Since $\tau$ has trivial central character, the big theta lift $\Theta_{V_2,W}(\tau)$ is zero unless $\tau=\theta_{V_2,W}(\mathbb{1}_{\U(W)})$, which is nonzero because of Proposition~\ref{prop: 6.3}. In this case, $\Theta_{V_2,W}(\tau)=\mathbb{1}_{\U(W)}$ and therefore the space above is one-dimensional.

It follows from the previous discussion that the restriction of $\sigma$ to $\U(V_2)$ has a unique irreducible quotient with trivial central character, which is isomorphic to $\theta_{V_2,W}(\mathbb{1}_{\U(W)})$. By \cite[Lemma~2.1(ii)]{HKS}, we know that $\sigma^\vee\simeq \sigma^c$, with the notation of Remark~\ref{remark: extensions of representations}. Therefore, we conclude that the restriction of $\sigma^\vee$ to $\U(V_2)$ has a unique irreducible quotient with trivial central character. 
\end{proof}

\begin{proposition}
     Assume that $\chi^2\neq 1$. Then, the restriction of the representation $(\sigma')^\vee$ to $\U(V_2')$ has a unique irreducible quotient with trivial central character. 
\end{proposition}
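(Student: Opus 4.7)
The plan is to mimic the proof of Proposition~\ref{prop:subrep with trivial central char} \emph{mutatis mutandis}, using the decomposition $V_3 = V_1' \oplus V_2'$ in place of $V_3 = V_1 \oplus V_2$, the skew-Hermitian space $W'$ in place of $W$, and the see-saw dual pair
\begin{center}
\begin{tikzcd}
\U(V_3) \arrow[dash,rd] & \U(W') \times \U(W') \arrow[dash,ld] \\
\U(V_1') \times \U(V_2') & \Delta\U(W')
\end{tikzcd}
\end{center}
with compatible splitting characters $\chi_{V_i'} = \chi^i$ and $\chi_{W'} = \chi$. Since $\sigma' = \Theta_{V_3, W'}(\mathbb{1}_{W'})$, the standard see-saw identity will produce, for any irreducible representation $\tau$ of $\U(V_2')$ with trivial central character,
\begin{align*}
\Hom_{\U(V_1') \times \U(V_2')}\!\left(\sigma', \mathbb{1}_{\U(V_1')} \boxtimes \tau\right) \cong \Hom_{\Delta\U(W')}\!\left(\mathbb{1}_{\U(W')} \otimes \Theta_{V_2', W'}(\tau), \mathbb{1}_{\U(W')}\right).
\end{align*}

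The hypothesis $\chi^2 \neq 1$ enters precisely through Proposition~\ref{prop: 6.3}, which guarantees that $\Theta_{V_2', W'}(\mathbb{1}_{\U(W')}) \neq 0$. Howe duality then produces a unique irreducible representation $\tau_0 := \theta_{V_2', W'}(\mathbb{1}_{\U(W')})$ of $\U(V_2')$ whose big theta lift to $\U(W')$ contains $\mathbb{1}_{\U(W')}$, and the central-character compatibility of our splitting characters ensures that $\tau_0$ has trivial central character. Hence the right-hand side of the displayed isomorphism is one-dimensional when $\tau \simeq \tau_0$ and vanishes otherwise. It follows that $\sigma'|_{\U(V_2')}$ admits $\tau_0$ as its unique irreducible quotient with trivial central character; the $\U(V_1')$-component of any such quotient is forced to be $\mathbb{1}_{\U(V_1')}$ because $\sigma'$ descends to $\PU(V_3)$, so the diagonally embedded $Z(\U(V_3))$ must act trivially.

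To finish, I would invoke $(\sigma')^\vee \simeq (\sigma')^c$ from \cite[Lemma~2.1(ii)]{HKS}, exactly as in the proof of Proposition~\ref{prop:subrep with trivial central char}, to transfer the uniqueness statement from $\sigma'|_{\U(V_2')}$ to $(\sigma')^\vee|_{\U(V_2')}$, whose unique irreducible quotient with trivial central character will then be $\tau_0^\vee$. The only step that requires genuine care is the central-character bookkeeping that funnels the hypothesis $\chi^2 \neq 1$ into the correct case of Proposition~\ref{prop: 6.3}, but this is routine once the see-saw is in place; no new ideas beyond those of the previous proposition are needed.
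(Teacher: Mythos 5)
Your proposal is correct and is essentially the paper's own proof: the paper likewise runs the see-saw of Proposition~\ref{prop:subrep with trivial central char} with $V_1,V_2,W$ replaced by $V_1',V_2',W'$, uses $\chi^2\neq 1$ via Proposition~\ref{prop: 6.3} to guarantee $\theta_{V_2',W'}(\mathbb{1}_{\U(W')})\neq 0$, and passes to $(\sigma')^\vee$ through $(\sigma')^\vee\simeq(\sigma')^c$ from \cite[Lemma~2.1(ii)]{HKS}. Your extra remark that the $\U(V_1')$-component of any trivial-central-character quotient is forced to be $\mathbb{1}_{\U(V_1')}$ (since $Z(\U(V_3))$ acts trivially on $\sigma'$) is a correct justification of a step the paper leaves implicit.
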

\begin{proof}
    This can be proved by a see-saw argument as in the previous proposition, replacing $V_1$, $V_2$, and $W$, with $V_1'$, $V_2'$, and $W'$, respectively. Note that the hypothesis $\chi^2\neq 1$ is needed to ensure that the theta lift $\tau=\theta_{V_2',W'}(\mathbb{1}_{\U(W')})$ is nonzero, which follows in this case from Proposition~\ref{prop: 6.3}. When running the previous argument, with $H_1'=\Delta\U(W')$, this yields
    \[
    \Hom_{H_{1}'}\left(\mathbb{1}_{\U(W')}\otimes \Theta_{V_2',W'}(\tau),\mathbb{1}_{\U(W')}\right)=\Hom_{H_{1}'}\left(\mathbb{1}_{\U(W')}\otimes \mathbb{1}_{\U(W')},\mathbb{1}_{\U(W')}\right),
    \]
    which is one-dimensional.
\end{proof}

Now, an application of Theorem~\ref{thm: nonvanishing} yields the following result.
\begin{theorem}\label{thm: nonvanishing of pi}
With the previous definitions, we have the following:
\begin{enumerate}
    \item The representation $\Theta_{\PU_3}(\sigma^+)$ of $G$ is nonzero, and thus $\pi^+$ is nonzero. 
    \item If $\chi^2\neq 1$, the representation $\Theta_{\PU_3}(\sigma^-)$ of $G$ is nonzero, and thus $\pi^-$ is nonzero. 
\end{enumerate}
\end{theorem}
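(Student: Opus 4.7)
The plan is to apply the non-vanishing criterion Theorem~\ref{thm: nonvanishing} to each of $\sigma^+$ and $\sigma^-$, using the two see-saw propositions immediately preceding the theorem to verify the hypothesis. Since $\pi^\pm = \theta_{\PU_3}(\sigma^\pm)$ is by definition the maximal semisimple quotient of $\Theta_{\PU_3}(\sigma^\pm)$, it is enough to show that the corresponding big theta lifts are nonzero.

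For part (1), I would first dispose of the case $\chi^2 = 1$. By the discussion between Proposition~\ref{prop: 6.3} and its proof (combined with the dichotomy via Theorem~\ref{HarrisKudlaSweet}), when $\chi^2 = 1$ we have the identification $\sigma = \sigma^+$ (and $\sigma' = \sigma^-$). Proposition~\ref{prop:subrep with trivial central char} then directly says that the restriction of $(\sigma^+)^\vee$ to $\U(V_2)$ has an irreducible quotient with trivial central character, so Theorem~\ref{thm: nonvanishing} applies and yields $\Theta_{\PU_3}(\sigma^+) \neq 0$.

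For the case $\chi^2 \neq 1$, which handles the remainder of (1) and all of (2), I would apply both see-saw propositions simultaneously. Proposition~\ref{prop:subrep with trivial central char} still provides an irreducible quotient of $\sigma^\vee|_{\U(V_2)}$ with trivial central character, and the subsequent unnamed proposition (which requires precisely the hypothesis $\chi^2 \neq 1$, so that $\theta_{V_2', W'}(\mathbbm{1}_{\U(W')})$ is nonzero via Proposition~\ref{prop: 6.3}) provides an analogous quotient of $(\sigma')^\vee|_{\U(V_2')}$. Invoking Theorem~\ref{thm: nonvanishing} in each case gives $\Theta_{\PU_3}(\sigma) \neq 0$ and $\Theta_{\PU_3}(\sigma') \neq 0$. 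Since $\{\sigma, \sigma'\} = \{\sigma^+, \sigma^-\}$ as unordered pairs, this yields $\Theta_{\PU_3}(\sigma^+) \neq 0$ and $\Theta_{\PU_3}(\sigma^-) \neq 0$ simultaneously.

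The argument is essentially bookkeeping once Theorem~\ref{thm: nonvanishing} and the two preceding see-saw propositions are available; the genuine work has already been carried out in verifying the hypotheses of the criterion. The only subtlety worth flagging is the correct matching $\sigma \leftrightarrow \sigma^+$ when $\chi^2 = 1$, which must be extracted from the explicit description of when the trivial character lifts in dimension zero; if this matching were unavailable, one would only obtain $\Theta_{\PU_3}(\sigma) \neq 0$ without being able to conclude that this $\sigma$ is $\sigma^+$ rather than $\sigma^-$.
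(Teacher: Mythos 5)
Your proposal is correct and follows essentially the same route as the paper: the paper's proof is precisely an application of Theorem~\ref{thm: nonvanishing} to $\sigma$ and $\sigma'$ via the two see-saw propositions, using that $\{\sigma,\sigma'\}=\{\sigma^+,\sigma^-\}$ with $\sigma=\sigma^+$ when $\chi^2=1$ (and, implicitly, the finite-length result so that a nonzero big lift has a nonzero maximal semisimple quotient). The bookkeeping you spell out, including the matching subtlety in the case $\chi^2=1$, is exactly the reasoning left implicit in the paper.
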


\section{Vanishing of theta lifts}\label{section: vanishing of theta lifts}

Let $K/F$ be a quadratic extension of local fields of characteristic zero. Fix a nontrivial additive character $\psi:F\rightarrow \bb{C}^\times$ and a conjugate-symplectic character $\chi:K^\times\rightarrow \bb{C}^\times$. Let $\sigma^+$ and $\sigma^-$ be the representations of $\PU_3$ introduced in Definition~\ref{big theta lift howe ps} with respect to these data and let $\pi^+$ and $\pi^-$ be their corresponding small theta lifts to the group $G$, as introduced in Definition~\ref{definition theta lift g2}. In this section we will prove some vanishing results for the representation $\pi^-$. 


In Section~\ref{subsec: vanishing of the generic part}, we prove that $\pi^-$ does not have any generic subquotient. This result, combined with the non-vanishing of $\pi^-$ when $\chi^2 \neq 1$ proved in Theorem~\ref{thm: nonvanishing of pi}, implies that $\pi^-$ is irreducible if $\chi^2 \neq 1$. This completes the proof of the second part of Theorem~\ref{theorem: pi+ for g2}. The rest of the section considers the case when $\chi^2 = 1$ and proves the vanishing of $\pi^-$ in that situation, completing the proof of the third point of Theorem~\ref{theorem: pi+ for g2}. For that, we study the twisted coinvariant spaces of $\pi^\pm$ corresponding to generic characters of the unipotent radical of the Heisenberg parabolic subgroup of $G$. More precisely, for every such character $\psi_E$:  
\begin{itemize}
    \item In Section~\ref{subsec: Twisted coinvariant spaces for pi+}, we obtain an explicit description of $\pi^+$ which allows us to verify that the twisted coinvariant space for $\pi^+$ with respect to $\psi_E$ does not vanish.
    \item In Section \ref{subsec: vanishing of the non-generic part}, we express the twisted coinvariant space for $\pi^+ \oplus \pi^-$ with respect to $\psi_E$ as a sum of toric periods for $\sigma^+ \oplus \sigma^-$. Since the representations $\sigma^{\pm}$ are theta lifts of characters in $\U_1$, the non-vanishing of these periods can be expressed in terms of local epsilon factors, as it is done in \cite{BFGYYZ}. Moreover, in [\emph{loc.\,cit.}], it is proven that exactly one of these toric periods contributes to the sum with precisely dimension 1.     
\end{itemize}
As a consequence, we obtain that the twisted coinvariant spaces for $\pi^-$ with respect to any generic character of the unipotent radical of the Heisenberg parabolic subgroup of $G$ are zero, from which we deduce that $\pi^-$ does not have a nontrivial non-generic quotient. Combining this fact with the results in Section~\ref{subsec: vanishing of the generic part}, we conclude the vanishing of $\pi^-$. 


\subsection{Vanishing of the generic part}\label{subsec: vanishing of the generic part}

In this subsection, we prove that $\pi^-$ does not have any generic subquotient. When $\chi^2\neq 1$, we deduce from this that $\pi^-$ is irreducible. 
The proofs rely on results of \cite[\S4.3-4.4]{BS}. 

As before, let ${B}'$ be a Borel subgroup of $\PU_3$ and let $U'$ be its unipotent radical. It has a filtration $0\subseteq U'(2)\subseteq U'$, where $U'(2)\simeq F$ and $U'/U'(2)\simeq K$. Via the last identification, we can define a character $\psi_{U'}$ of $U'$ corresponding to a nontrivial additive character $\psi\circ \Tr_{K/F}$ of $K$.

According to \cite[Corollary~4.6]{BS}, in order to prove the vanishing of the generic part of $\pi^-$, it suffices to prove the vanishing of the space of Whittaker periods
\[
\Hom_{U'}((\sigma^{-})^\vee,\bar{\psi}_{U'})\subseteq \Hom_{U'(2)}((\sigma^{-})^\vee,\bb{1}).
\]
Note that, as explained in the proofs of Proposition \ref{prop: irreducibility of induction to G'} and of Proposition \ref{prop: reducibility of induction to G'}, the representation $(\sigma^-)^\vee$ is the big theta lift of a character of $\U_1$ to a representation of $\U_3$. In particular, we can use the explicit description of the coinvariant spaces of theta lifts in this setting described in \cite[Section 2.12, Guided Exercise (ii)]{AWS} to affirm that
\[
(\sigma^-)^\vee_{U'(2)}=(\sigma^-)^\vee_{U'}.
\]
Since $(\sigma^-)^\vee$ is supercuspidal, as $(\sigma^-)^\vee \simeq (\sigma^-)^c$ by \cite[Lemma 2.1(ii)]{HKS} and $\sigma^-$ is supercuspidal, the right-hand side is equal to zero, implying the vanishing of the space of Whittaker periods.

We write the result as a proposition for reference.

\begin{propo}\label{prop: generic vanishing}
    The representation $\pi^-$ does not have any generic subquotient.
\end{propo}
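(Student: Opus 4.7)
The plan is to follow the template set up by \cite[Corollary~4.6]{BS}, which translates the existence of a generic subquotient of a theta lift $\theta_{\PU_3}(\sigma^-)$ into the non-vanishing of a Whittaker-type period for $(\sigma^-)^\vee$. Concretely, if $\pi^-$ admitted a generic subquotient, then (after choosing compatible characters so the theta correspondence matches Whittaker data along $U_1\subset Q_1$ with Whittaker data along $U'\subset B'$) one would obtain
\[
\Hom_{U'}\!\bigl((\sigma^{-})^\vee,\bar{\psi}_{U'}\bigr)\neq 0,
\]
and our goal reduces to ruling this out.

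Next, I would relate this Whittaker period to Jacquet modules. The inclusion
\[
\Hom_{U'}\!\bigl((\sigma^{-})^\vee,\bar{\psi}_{U'}\bigr)\subseteq \Hom_{U'(2)}\!\bigl((\sigma^{-})^\vee,\mathbb{1}\bigr)
\]
is immediate from the filtration $0\subseteq U'(2)\subseteq U'$, since $\psi_{U'}$ is trivial on $U'(2)$. Thus it suffices to show that $(\sigma^{-})^\vee_{U'(2)}=0$. The key observation is that $(\sigma^{-})^\vee$ is itself a theta lift of a character of $\U_1$ (this uses $(\sigma^-)^\vee\simeq(\sigma^-)^c$ from \cite[Lemma~2.1(ii)]{HKS} together with the realization of $\sigma^-$ as $\Theta_{V,W^-,\gamma,\psi_v}(\mathbb{1})$ from Definition~\ref{big theta lift howe ps}). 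Using the explicit description of the coinvariant spaces of such theta lifts from \cite[\S2.12, Guided Exercise (ii)]{AWS}, one obtains the crucial identification
\[
(\sigma^{-})^\vee_{U'(2)} = (\sigma^{-})^\vee_{U'}.
\]

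Finally, the right-hand side is the Jacquet module of $(\sigma^{-})^\vee$ along the unipotent radical of a proper parabolic subgroup of $\PU_3$. Since $\sigma^-$ is supercuspidal (proven in Section~\ref{nonsplit A-packet Pu3}), so is its contragredient $(\sigma^{-})^\vee$, and therefore this Jacquet module vanishes. Chaining the three steps yields $\Hom_{U'}((\sigma^{-})^\vee,\bar\psi_{U'})=0$, which by the Baki\'c--Savin reduction forces $\pi^-$ to have no generic subquotient. The step I expect to require the most care is the middle one: verifying that the Guided Exercise in \cite{AWS} applies to our specific filtration $U'(2)\subset U'$ with the correct identification of unipotent radicals so that the partial coinvariant along $U'(2)$ is forced to coincide with the full coinvariant along $U'$; everything else is a formality once this step is in place.
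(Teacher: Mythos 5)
Your proposal follows exactly the paper's argument: reduce via \cite[Corollary~4.6]{BS} to the vanishing of $\Hom_{U'}((\sigma^-)^\vee,\bar\psi_{U'})$, embed this into $\Hom_{U'(2)}((\sigma^-)^\vee,\mathbb{1})$, identify $(\sigma^-)^\vee_{U'(2)}=(\sigma^-)^\vee_{U'}$ using the realization of $(\sigma^-)^\vee$ as a theta lift from $\U_1$ and \cite[\S2.12, Guided Exercise (ii)]{AWS}, and conclude by supercuspidality of $(\sigma^-)^\vee$. This is the same proof as in the paper, step for step.
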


From there, we obtain the following result.

\begin{theorem}\label{thm: pi- is irreducible if chi^2 neq 1}
    If $\chi^2\neq 1$, the representation $\pi^-$ is irreducible and tempered.
\end{theorem}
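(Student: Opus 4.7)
The plan is to identify $\pi^-$ with a big theta lift from $G'=\PU_3\rtimes\Gal(K/F)$, reduce the problem to a statement about theta lifts of tempered supercuspidal representations of $G'$, and then invoke the relevant structural results from \cite{BS}, using the generic vanishing already established to eliminate unwanted constituents.

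First, under the assumption $\chi^2\neq 1$, I would apply Proposition~\ref{prop: irreducibility of induction to G'} to conclude that $\tilde{\sigma}^-:=\Ind_{\PU_3}^{G'}\sigma^-$ is an irreducible representation of $G'$. Since $\sigma^-$ is supercuspidal (Section~\ref{nonsplit A-packet Pu3}), $\tilde{\sigma}^-$ is itself supercuspidal and hence tempered. Using Remark~\ref{remark: extensions of representations}, I can then identify $\Theta_{\PU_3}(\sigma^-)=\Theta(\tilde{\sigma}^-)$, and by Theorem~\ref{thm: nonvanishing of pi} this big theta lift is nonzero.

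The core step is then to invoke \cite[Proposition~4.15]{BS}, which controls the exceptional theta lift of a tempered supercuspidal representation of $G'$ to $G$, together with the qualifications recorded in \cite[Remark~4.4]{BS}. Applied to $\tilde{\sigma}^-$, it yields that the nonzero representation $\Theta(\tilde{\sigma}^-)$ sits inside a tempered $L$-packet of $G$, so it is in particular tempered and of finite length. To upgrade this to irreducibility, I would use Proposition~\ref{prop: generic vanishing}, which rules out any generic subquotient of $\pi^-$: this forces $\Theta(\tilde{\sigma}^-)$ to be the unique non-generic constituent in that tempered $L$-packet, hence irreducible. Consequently $\pi^-=\theta_{\PU_3}(\sigma^-)=\Theta_{\PU_3}(\sigma^-)$ is irreducible and tempered, as claimed.

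The main delicacy is the precise invocation of \cite[Proposition~4.15]{BS}. If that proposition already delivers irreducibility directly from the tempered supercuspidal hypothesis on $\tilde{\sigma}^-$, the argument is essentially immediate from the irreducibility of the induced representation and the non-vanishing. If instead it only classifies the constituents inside a tempered $L$-packet (the more likely scenario, given that Proposition~\ref{prop: generic vanishing} is explicitly cited in the sketch of Theorem~\ref{theorem: pi+ for g2}), then the generic vanishing is essential to isolate a single constituent. Either way, the inputs are: irreducibility of $\tilde{\sigma}^-$ on $G'$, non-vanishing of the theta lift, absence of generic subquotients, and the BS machinery on tempered supercuspidal lifts.
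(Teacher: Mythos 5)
Your proposal is correct and follows essentially the same route as the paper's proof: irreducibility and temperedness of $\Ind_{\PU_3}^{G'}\sigma^-$, non-vanishing from Theorem~\ref{thm: nonvanishing of pi}, absence of generic subquotients from Proposition~\ref{prop: generic vanishing}, and \cite[Remark~4.4]{BS} plus \cite[Proposition~4.15]{BS} to conclude that the irreducible quotient is unique and tempered. The only cosmetic difference is that the paper concludes uniqueness of the irreducible quotient of the big theta lift (so that the small theta lift $\pi^-$ is irreducible) rather than asserting $\pi^-=\Theta_{\PU_3}(\sigma^-)$ outright, but this does not affect the argument.
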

\begin{proof}
    Note that $\Ind_{\PU_3}^{G'} \sigma^-$ is irreducible (by Proposition~\ref{prop: irreducibility of induction to G'}) and tempered (because $\sigma^-$ is supercuspidal).  We also make the following two observations: 
    \begin{itemize}
        \item By \cite[Remark~4.4]{BS}, every irreducible quotient of $\pi^-$ is tempered.
        \item By Proposition~\ref{prop: generic vanishing}, every irreducible quotient of $\pi^-$ is non-generic.
    \end{itemize}    
    Now, Theorem~\ref{thm: nonvanishing of pi} implies that there exists a nonzero irreducible quotient of $\pi^-$. Moreover, \cite[Proposition~4.15]{BS} together with these two observations imply that such quotient is unique. Hence, $\pi^-$ is irreducible and tempered as desired.
\end{proof}

\subsection{Twisted coinvariant spaces for $\pi^+$}\label{subsec: Twisted coinvariant spaces for pi+}
We assume from now on that $\chi^2 = 1$. This implies that $\chi$ is $\Gal(K/F)$-invariant, hence there exists a character $\mu$ of $F^\times$ such that $\chi = \mu \circ \Norm_{K/F}$. Fix a choice of such character $\mu$. Note that $\mu \neq \mu^{-1}$ as $\mu^2 = \omega_{K/F}$. Denote by $P$ the Borel of $\GL_2(F)$ consisting on upper triangular matrices.

\begin{proposition}\label{prop: automorphic induction of chi when chic = chi}
	Consider the same notation as above. We have: 
	\begin{enumerate}
		\item The automorphic induction of $\chi\lvert \Norm_{K/F}( \ ) \rvert^{1/2}$ from $K^\times$ to $\GL_2(F)$ is the irreducible representation $\vert \det \vert^{1/2} \tau$, where 
        \[ 
        \tau = i_P^{\GL_2(F)}(\mu, \mu^{-1}).
        \]
		\item Regard $\vert \det \vert^{1/2}\tau$ as a representation of $L_1 \simeq \GL_2$, i.e. 
		\begin{equation}\label{deftau}
		\vert \det \vert^{1/2}\tau = i_{T(U\cap L_1)}^{L_1}(\mu\lvert \ \rvert^{1/2} \otimes \mu^{-1} \lvert \ \rvert^{1/2}).
		\end{equation}
		Then, $\pi^+$ is the unique irreducible quotient of $i_{Q_1}^G(\vert \det \vert^{1/2}\tau)$.
	\end{enumerate}
\end{proposition}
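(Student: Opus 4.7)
For part (1), the plan is to compute the Langlands parameter of the automorphic induction of $\chi\cdot|\Norm_{K/F}(\cdot)|_F^{1/2}$ and match it to that of $|\det|^{1/2}\tau$. Since $\chi = \mu\circ \Norm_{K/F}$, we have
\[
\chi\cdot|\Norm_{K/F}(\cdot)|_F^{1/2} = (\mu|\cdot|_F^{1/2})\circ \Norm_{K/F},
\]
so the standard projection formula for induction from $W_K$ to $W_F$ yields
\[
\Ind_{W_K}^{W_F}\!\left(\chi\cdot|\Norm_{K/F}(\cdot)|_F^{1/2}\right) \;\simeq\; \mu|\cdot|^{1/2}\;\oplus\; \mu\cdot\omega_{K/F}\cdot|\cdot|^{1/2}.
\]
I would then use the relation $\mu^2 = \omega_{K/F}$, which forces $\mu^4 = 1$ and hence $\mu\cdot\omega_{K/F} = \mu^3 = \mu^{-1}$, turning this parameter into $\mu|\cdot|^{1/2}\oplus \mu^{-1}|\cdot|^{1/2}$. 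Under the local Langlands correspondence for $\GL_2(F)$, this is exactly the parameter of $i_P^{\GL_2(F)}(\mu|\cdot|^{1/2},\mu^{-1}|\cdot|^{1/2}) = |\det|^{1/2}\tau$.

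To justify the irreducibility of $\tau = i_P^{\GL_2(F)}(\mu,\mu^{-1})$, I would observe that $\mu$ is unitary (since $|\mu|^2 = |\omega_{K/F}| = 1$) and that $\mu \neq \mu^{-1}$ (otherwise $\mu^2 = 1$, contradicting $\mu^2 = \omega_{K/F}\neq 1$), so the two inducing characters are distinct unitary characters and the principal series is irreducible and tempered. The reinterpretation of $|\det|^{1/2}\tau$ as the induced representation $i_{T(U\cap L_1)}^{L_1}(\mu|\cdot|^{1/2}\otimes \mu^{-1}|\cdot|^{1/2})$ in display~\eqref{deftau} is then immediate from the isomorphism \eqref{isoL1GL2} and the compatibility of normalized parabolic induction with unramified twists.

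For part (2), I would invoke Proposition~\ref{prop: exceptional theta lift non tempered representation} with $s = 1/2$ applied to the unitary character $\chi$. By Proposition~\ref{prop: simga+ PU3}, $\sigma^+$ is the unique irreducible quotient of $I(\chi,1/2)$; combining this with part (1) and Remark~\ref{rmk: tau is the automorphic induction of chi}, Proposition~\ref{prop: exceptional theta lift non tempered representation} shows that $\pi^+ = \theta_{\PU_3}(\sigma^+)$ is a nonzero quotient of $i_{Q_1}^G(|\det|^{1/2}\tau)$. Since $\tau$ is tempered, $i_{Q_1}^G(|\det|^{1/2}\tau)$ is a standard module in the Langlands classification and therefore has a unique irreducible quotient (see \cite[p.~468]{M}), which must coincide with $\pi^+$. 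No serious obstacle is anticipated: both parts are direct consequences of results already established in Sections \ref{section: Theta correspondence PU3 times G2} and~\ref{subsec: howe ps a packets}, and the only genuine computation is the Weil-group calculation in part (1), which is essentially forced by $\mu^2 = \omega_{K/F}$.
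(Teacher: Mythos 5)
Your proposal is correct and follows essentially the same route as the paper: part (1) is the same computation (induction of a character factoring through the norm splits as $\mu|\cdot|^{1/2}\oplus\mu\omega_{K/F}|\cdot|^{1/2}$, then $\mu\omega_{K/F}=\mu^{-1}$), and part (2) cites the same ingredients, namely Proposition~\ref{prop: simga+ PU3}, Proposition~\ref{prop: exceptional theta lift non tempered representation}, Remark~\ref{rmk: tau is the automorphic induction of chi}, and the uniqueness of the irreducible quotient of the standard module $i_{Q_1}^G(\vert\det\vert^{1/2}\tau)$.
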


\begin{proof}
    We start proving the first point. Note that $(\mu \circ \Norm_{K/F})\lvert \Norm_{K/F}(\ )\rvert^{1/2}$ is Galois invariant. Thus, the automorphic induction of this character to $\GL_2(F)$ is the irreducible principal series corresponding to the character of the diagonal torus in $\GL_2(F)$ given by $\mu| \ |^{1/2} \otimes \omega_{K/F}\mu |\ |^{1/2}$. Note that we used that, since $\omega_{K/F} \neq \vert \ \vert^{\pm 1}$, such principal series is irreducible. Using the relation $\mu^2 = \omega_{K/F}$, we can rewrite the automorphic induction as 
    \[
    i_P^{\GL_2(F)}(\mu\lvert \ \rvert^{1/2}, \mu^{-1} \lvert \ \rvert^{1/2}) = \vert \det \vert^{1/2}i_P^{\GL_2(F)}(\mu, \mu^{-1}),
    \]
    proving the first point. Since $\pi^+ = \theta_{\PU_3}(\sigma^+)$, and we saw in the proof of Theorem~\ref{theorem: pi+ for g2} that $i_{Q_1}^G(\vert \det \vert^{1/2}\tau)$ has a unique irreducible quotient, the second point of the proposition follows from the first one, Proposition~\ref{prop: exceptional theta lift non tempered representation} and Remark~\ref{rmk: tau is the automorphic induction of chi}.
\end{proof}

In view of the previous proposition, the next proposition will provide a description of $\pi^+$ in this case.

\begin{proposition}\label{From3stepparabolictoHeisenberg}
	The unique irreducible quotient of $i_{Q_1}^G(\vert \det \vert^{1/2}\tau)$ is given by $i_{Q_2}^G(\mu \circ \det)$.
\end{proposition}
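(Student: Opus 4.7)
The plan is to realize both representations as subquotients of a common principal series on the Borel $B\subset G$ and to identify the unique irreducible quotient by matching Langlands data. By induction in stages together with the identification \eqref{isoL1GL2} and the relation $\mu^2=\omega_{K/F}$, we have
\[
i_{Q_1}^G(|\det|^{1/2}\tau)\cong i_B^G(\chi^{(1)}),\qquad \chi^{(1)}(t_1,t_2)=\mu^{-1}(t_1)|t_1|^{1/2}\omega_{K/F}(t_2).
\]
On the other hand, the standard $\GL_2$ computation shows that $\mu\circ\det$ is the Langlands quotient of $i_{B\cap L_2}^{L_2}(\mu|\ |^{1/2}\otimes\mu|\ |^{-1/2})$; by exactness of parabolic induction and \eqref{isloL2GL2}, this yields a surjection
\[
i_B^G(\chi^{(2)})\twoheadrightarrow i_{Q_2}^G(\mu\circ\det),\qquad \chi^{(2)}(t_1,t_2)=\mu(t_1)|t_1|^{1/2}\mu(t_2)|t_2|^{-1/2}.
\]

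A direct Weyl group computation, using that $w_\alpha$ swaps the two coordinates of $T\simeq F^\times\times F^\times$ and $w_\beta$ acts by $(a,b)\mapsto(a+b,-b)$ on exponents of characters, shows that $\chi^{(2)}=w_\beta w_\alpha\cdot\chi^{(1)}$; in particular $i_B^G(\chi^{(1)})$ and $i_B^G(\chi^{(2)})$ share the same Jordan--H\"older content. Since the unramified exponent $(1/2,0)$ of $\chi^{(1)}$ lies in the closure of the positive Weyl chamber of $G_2$ (on the wall $\langle\cdot,\beta^\vee\rangle=0$), the representation $i_{Q_1}^G(|\det|^{1/2}\tau)=i_B^G(\chi^{(1)})$ is a Langlands standard module with a unique irreducible quotient $J$.

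To identify $J$ with $i_{Q_2}^G(\mu\circ\det)$, I would match Langlands data. The representation $\mu\circ\det$ has Langlands data $(B\cap L_2,\mu\otimes\mu,(1/2,-1/2))$ on $L_2$; Weyl-conjugating via $w_\alpha w_\beta$ sends the exponent $(1/2,-1/2)$ to the positive-chamber representative $(1/2,0)$ and the character $\mu\otimes\mu$ to $\mu^{-1}\otimes\omega_{K/F}$, which restricts on $L_1\simeq\GL_2$ (via \eqref{isoL1GL2}) to $\mu\otimes\mu^{-1}$ and so parabolically induces to $\tau$. Since the stabilizer of $(1/2,0)$ in the Weyl group is $\langle w_\beta\rangle$, corresponding to the parabolic $Q_1$, the Langlands data of $i_{Q_2}^G(\mu\circ\det)$ on $G$ becomes $(Q_1,\tau,|\det|^{1/2})$, matching that of $J$. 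Hence $i_{Q_2}^G(\mu\circ\det)\cong J$.

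The main obstacle is verifying that $i_{Q_2}^G(\mu\circ\det)$ is irreducible, which is required for the compatibility between the Langlands classification and parabolic induction to apply cleanly. I expect this to follow either from an explicit Jacquet-module computation via the geometric lemma on $G_2$, checking that $r_{L_1}(i_{Q_2}^G(\mu\circ\det))$ has $|\det|^{1/2}\tau$ appearing as a quotient with multiplicity one, or from an appeal to Mui\'c's reducibility criteria for induced representations on $G_2$.
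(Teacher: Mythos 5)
Your reductions to the Borel are correct: $i_{Q_1}^G(\vert\det\vert^{1/2}\tau)=i_B^G(\chi^{(1)})$ with $\chi^{(1)}(t_1,t_2)=\mu^{-1}(t_1)\vert t_1\vert^{1/2}\omega_{K/F}(t_2)$, the surjection $i_B^G(\chi^{(2)})\twoheadrightarrow i_{Q_2}^G(\mu\circ\det)$ holds by exactness, the relation $\chi^{(2)}=w_\beta w_\alpha\cdot\chi^{(1)}$ checks out, and $i_{Q_1}^G(\vert\det\vert^{1/2}\tau)$ is indeed a standard module with a unique irreducible quotient $J$ (the paper cites Mui\'c, p.~468, for this). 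The irreducibility of $i_{Q_2}^G(\mu\circ\det)$ that you flag as the main obstacle is exactly where the paper invokes Mui\'c's Theorem~3.1(i), using that $\mu$ is unitary with $\mu^2=\omega_{K/F}\neq 1$; so that part of your plan is sound.

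The genuine gap is the identification step. Knowing that $i_{Q_2}^G(\mu\circ\det)$ is irreducible and that $i_B^G(\chi^{(1)})$ and $i_B^G(\chi^{(2)})$ have the same Jordan--H\"older content only tells you that $i_{Q_2}^G(\mu\circ\det)$ is \emph{some} constituent of $i_B^G(\chi^{(1)})$; it does not make it the Langlands quotient $J$. Your assertion that ``the Langlands data of $i_{Q_2}^G(\mu\circ\det)$ becomes $(Q_1,\tau,\vert\det\vert^{1/2})$'' is obtained by Weyl-conjugating the inducing data to its dominant representative, which computes the cuspidal support but not the Langlands data: already for $\GL_2$, the Steinberg representation is a constituent of $i_B(\vert\cdot\vert^{1/2}\otimes\vert\cdot\vert^{-1/2})$ yet its Langlands data is not the dominant exponent of that principal series, and the two orderings of the exponents give non-isomorphic induced modules with different unique irreducible quotients. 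To close the gap you must actually exhibit $i_{Q_2}^G(\mu\circ\det)$ as a quotient of the standard module $i_{Q_1}^G(\vert\det\vert^{1/2}\tau)$ (equivalently, show $i_B^G(\chi^{(1)})\simeq i_B^G(\chi^{(2)})$ as representations, not merely up to semisimplification). This is precisely what the paper does: it factors the passage from $\chi^{(1)}$ to $\chi^{(2)}$ into steps through $Q_1$ and $Q_2$, and at each step uses irreducibility of the intermediate $\GL_2$ principal series (Mui\'c, Proposition~1.1(i)) to swap the two characters, producing a chain of honest isomorphisms of full induced representations. An alternative repair along your lines would be to note that the remaining constituents of $i_B^G(\chi^{(2)})$ all lie in $i_{Q_2}^G(\delta(\mu))$ for the twisted Steinberg $\delta(\mu)$ of $L_2$ and are therefore tempered, while $J$ is not, forcing $J$ to be a constituent of $i_{Q_2}^G(\mu\circ\det)$; but as written the proposal does not contain either argument.
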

\begin{proof}
    The key to prove this proposition is to rewrite normalized parabolic inductions for the parabolic $Q_1$ (resp. $Q_2$) in terms of the parabolic $Q_2$ (resp. $Q_1$) and to use intertwining operators.
    
    Using the isomorphism induced by \eqref{isoL1GL2}, it can be seen
	\begin{equation}\label{eq: from parabolic to Borel}
	i_{Q_1}^G(\vert \det \vert^{1/2}\tau) = i_{Q_1}^G i_{T(U\cap L_1)}^{L_1} (\mu\lvert \ \rvert^{1/2}  \otimes \mu^{-1} \lvert \ \rvert^{1/2}) = i_{Q_1}^G i_{B}^{Q_1} (\mu\lvert \ \rvert^{1/2}(\alpha +\beta) \cdot \mu^{-1} \lvert \ \rvert^{1/2}(\alpha)),
	\end{equation}
	where in the last expression we are extending $\mu\lvert \ \rvert^{1/2}(\alpha +\beta) \cdot \mu^{-1} \lvert \ \rvert^{1/2}(\alpha)$ from a character of $T$ to a character of $B$. Now, using that $i_{Q_1}^G i_{B}^{Q_1} = i_{B}^G$, we obtain	
	\[
	i_{Q_1}^G(\vert \det \vert^{1/2}\tau) = i_B^{G}\left(\mu \lvert \ \rvert^{1/2}(\alpha + \beta) \cdot \mu^{-1}\lvert \ \rvert^{1/2}(\alpha)\right).
	\]
	This can be rewritten as
	\begin{equation}\label{eq: change of variables alpha and beta}
    \begin{split}
	i_B^{G}\left(\mu \lvert \ \rvert^{1/2}(\alpha + \beta) \cdot \mu^{-1}\lvert \ \rvert^{1/2}(\alpha)\right) & =  i_B^G\left(\mu^{-1}\lvert \ \rvert^{1/2}(2\alpha  + \beta) \cdot \mu^2(\alpha + \beta) \right) \\ & = i_{Q_2}^G i_{T(U\cap L_2)}^{L_2}\left(\mu^{-1}\lvert \ \rvert^{1/2} \otimes \mu^2 \right),
 \end{split}
 \end{equation}
	where in the last equality we proceeded in a similar way as in \eqref{eq: from parabolic to Borel} and used the isomorphism induced by \eqref{isloL2GL2}. By Proposition 1.1 (i) of \cite{M}, the representation of $L_2 \simeq \GL_2$ given by $i_{T(U\cap L_2)}^{L_2}\left(\mu^{-1}\lvert \ \rvert^{1/2} \otimes \mu^2 \right)$ is irreducible and we have an isomorphism
	\[
	i_{T(U\cap L_2)}^{L_2}\left(\mu^{-1}\lvert \ \rvert^{1/2}, \mu^2 \right) \simeq i_{T(U\cap L_2)}^{L_2}\left(\mu^2, \mu^{-1}\lvert \ \rvert^{1/2} \right).
	\]
	Hence, 
\begin{equation*}
\begin{split}        
	i_{Q_2}^G i_{T(U\cap L_2)}^{L_2}\left(\mu^{-1}\lvert \ \rvert^{1/2} \otimes \mu^2 \right) & \simeq i_{Q_2}^G i_{T(U\cap L_2)}^{L_2}\left(\mu^2 \otimes \mu^{-1}\lvert \ \rvert^{1/2} \right) \\ & \simeq i_B^G\left( \mu^2(2\alpha + \beta) \cdot \mu^{-1}\lvert \ \rvert^{1/2} (\alpha + \beta)  \right)  \simeq i_B^G \left( \mu \lvert \ \rvert^{1/2}(\alpha + \beta) \cdot \mu^2(\alpha)  \right),
 \end{split}
\end{equation*}
	where we used that $2\alpha + \beta = \alpha + (\alpha + \beta)$. Now, we can use Proposition 1.1 (i) of \cite{M} to obtain
	\[
	i_B^G \left( \mu \lvert \ \rvert^{1/2}(\alpha + \beta) \cdot \mu^2(\alpha)  \right) \simeq i_{Q_1}^{G}i_{T(U\cap L_1)}^{L_1}\left( \mu \lvert \ \rvert^{1/2} \otimes \mu^2  \right) \simeq i_{Q_1}^{G}i_{T(U\cap L_1)}^{L_1}\left(\mu^2 \otimes \mu \lvert \ \rvert^{1/2} \right).
	\]
    Proceeding in a similar way as above, we deduce
	\begin{equation*}
 \begin{split}
	i_{Q_1}^{G}i_{T(U\cap L_1)}^{L_1} & \left(\mu^2 \otimes \mu \lvert \ \rvert^{1/2} \right)  \simeq i_B^G\left( \mu^2(\alpha + \beta) \cdot \mu \lvert \ \rvert^{1/2}(\alpha) \right) \\ & \simeq i_B^G\left(  \mu \lvert \ \rvert^{1/2}(2\alpha + \beta) \cdot \mu \lvert \ \rvert^{-1/2}(\alpha + \beta) \right) \simeq i_{Q_2}^G i_{T(U\cap L_2)}^{L_2}\left(  \mu \lvert \ \rvert^{1/2} \otimes \mu \lvert \ \rvert^{-1/2} \right).
 \end{split}
    \end{equation*}
	By Proposition 1.1 (ii) of \cite{M}, we have that $i_{T(U \cap L_2)}^{L_2} \left( \mu\lvert \ \rvert^{1/2} \otimes \mu \lvert  \ \rvert^{-1/2} \right)$ has a unique irreducible quotient given by $\mu \circ \det$. It follows that $i_{Q_1}^G (\vert \det \vert^{1/2}\tau) = i_{Q_2}^G i_{T(U \cap L_2)}^{L_2} \left( \mu\lvert \ \rvert^{1/2} \otimes \mu \lvert  \ \rvert^{-1/2} \right)$ has a quotient given by $i_{Q_2}^G \left(\mu \circ \det\right)$. Finally, note that $\mu^2 = \omega_{K/F} \neq 1$ so, in particular, $\mu$ is unitary. Hence, Theorem 3.1 (i) of \cite{M} affirms that $i_{Q_2}^G \left(\mu \circ \det\right)$ is irreducible, and we are done. 
\end{proof}
We therefore obtain the following corollary.
\begin{corollary}\label{prop: quotient of pi_v+}
    Suppose that $\chi^2=1$ and let $\mu$ be a character of $F^\times$ such that $\chi=\mu\circ\norm_{K/F}$. Then, 
    \[
        \pi^+ = i_{Q_2}^{G}(\mu \circ \det),
    \]
    where $Q_2$ denotes the Heisenberg parabolic of $G_2$.
\end{corollary}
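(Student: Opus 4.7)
The plan is to combine the two immediately preceding propositions. By Proposition~\ref{prop: automorphic induction of chi when chic = chi}(2), under the hypothesis $\chi^2=1$ with $\chi=\mu\circ\norm_{K/F}$, the representation $\pi^+$ is the unique irreducible quotient of $i_{Q_1}^G(\lvert\det\rvert^{1/2}\tau)$, where $\tau=i_P^{\GL_2(F)}(\mu,\mu^{-1})$. By Proposition~\ref{From3stepparabolictoHeisenberg}, this unique irreducible quotient is $i_{Q_2}^G(\mu\circ\det)$. Splicing the two statements together gives the identification $\pi^+=i_{Q_2}^G(\mu\circ\det)$.

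In more detail, the first proposition isolates $\pi^+$ inside the three-step parabolic induction $i_{Q_1}^G(\lvert\det\rvert^{1/2}\tau)$ by appealing to the nontempered theta lifting results recalled from \cite{BS} (Proposition~\ref{prop: exceptional theta lift non tempered representation} together with Remark~\ref{rmk: tau is the automorphic induction of chi}), and uses that the inducing data is tempered to invoke Muić's uniqueness of irreducible quotients. The second proposition then rewrites this induced representation as a principal series on $B$, performs a sequence of Weyl-orbit manipulations via the intertwining results in Proposition~1.1 of \cite{M}, and finally identifies the Langlands quotient as a parabolic induction from the Heisenberg parabolic $Q_2$ of a character of the form $\mu\circ\det$, which is itself irreducible by Theorem~3.1(i) of \cite{M} since $\mu^2=\omega_{K/F}\neq 1$ (so $\mu$ is unitary).

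Since both results have already been established, no further computation is required; the corollary is simply the formal composition of the two. There is no serious obstacle; the only delicate point is that the identification in Proposition~\ref{From3stepparabolictoHeisenberg} produces an irreducible quotient, and we must know \emph{a priori} that $i_{Q_1}^G(\lvert\det\rvert^{1/2}\tau)$ has a unique irreducible quotient in order to match it with $\pi^+$. This uniqueness is already noted in the proof of Theorem~\ref{theorem: pi+ for g2} as a consequence of \cite[p.~468]{M} applied to the tempered inducing datum $\lvert\det\rvert^{1/2}\tau$.
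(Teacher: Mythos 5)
Your proposal is correct and is exactly the paper's argument: the corollary is the formal concatenation of Proposition~\ref{prop: automorphic induction of chi when chic = chi}(2) and Proposition~\ref{From3stepparabolictoHeisenberg}, with the uniqueness of the irreducible quotient of $i_{Q_1}^G(\lvert\det\rvert^{1/2}\tau)$ (via \cite[p.~468]{M}) doing the matching. Nothing further is needed.
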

\begin{proof}
    This follows from Proposition~\ref{prop: automorphic induction of chi when chic = chi} and Proposition~\ref{From3stepparabolictoHeisenberg}.
\end{proof}
\begin{remark}\label{remark: pi+ is irreducible}
	Even though we will not need it explicitly, it can be verified that in fact $\Theta_{\PU_3}(\sigma^+) = i_{Q_2}^G(\mu \circ \det)$.
\end{remark}

We use the previous corollary to determine the non-vanishing of the twisted coinvariant spaces of $\pi^+$.

\begin{proposition}\label{prop: coinvariants of pi plus}
	Consider the same notation as above. Let $\psi'$ be a character of $U_2$. We have
	\[
	\dim\left(\Hom_{U_2}(\pi^+, \psi')\right) \geq 1. 
	\]
\end{proposition}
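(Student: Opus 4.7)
By Corollary~\ref{prop: quotient of pi_v+}, we have $\pi^+ = i_{Q_2}^G(\mu\circ\det)$, so it suffices to construct a nonzero $(U_2,\psi')$-equivariant linear functional on this parabolically induced representation for each character $\psi'$ of $U_2$. The plan is to handle the trivial character by direct evaluation, and a nontrivial character by a Jacquet-type integral on the open Bruhat cell.

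For the trivial character $\psi' = \bb{1}$, the evaluation map $\ell(f) = f(1_G)$ provides such a functional directly: for $u\in U_2\subset Q_2$,
\[
(u\cdot f)(1_G) = f(u) = \delta_{Q_2}^{1/2}(u)\,(\mu\circ\det)(u)\,f(1_G) = f(1_G),
\]
since both $\delta_{Q_2}$ and $\mu\circ\det$ extend from characters of the Levi $L_2$ and are therefore trivial on the unipotent radical $U_2$. Choosing any $f$ with $f(1_G)\neq 0$ shows $\ell\neq 0$.

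For a nontrivial character $\psi'$, my plan is to use a Jacquet-type integral on the open Bruhat cell. I would fix a representative $w_0\in G$ of the longest element of $W_{L_2}\backslash W / W_{L_2}$, so that $Q_2 w_0 Q_2 = Q_2 w_0 \bar{U}_2$ is open and dense in $G$ and $U_2\cap w_0^{-1}Q_2 w_0 = \{1\}$; the latter follows because $w_0^{-1}Q_2 w_0$ is the opposite parabolic $\bar{Q}_2 = L_2\bar{U}_2$ and a direct calculation gives $U_2\cap L_2\bar{U}_2 = \{1\}$. I would then consider
\[
\ell_{\psi'}(f) = \int_{U_2} f(w_0 u)\,\psi'(u)^{-1}\,du,
\]
for which the change of variables $u\mapsto u u_0$ yields formally $\ell_{\psi'}(u_0\cdot f) = \psi'(u_0)\,\ell_{\psi'}(f)$. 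Convergence can be established first on the dense subspace of $\pi^+$ consisting of sections whose restriction to the open Bruhat cell is compactly supported. Indeed, decomposing $w_0 u = \bar{u}(u)\, q(u)$ with $\bar{u}(u)\in\bar{U}_2$ and $q(u)\in Q_2$, the map $u\mapsto \bar{u}(u)$ is a diffeomorphism from an open dense subset of $U_2$ onto an open dense subset of $\bar{U}_2$, so the integrand is compactly supported in $u$; the functional then extends to all of $\pi^+$ by the standard meromorphic continuation argument for Jacquet integrals, using that $\mu$ is unitary (since $\mu^2 = \omega_{K/F}$).

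The main obstacle will be verifying non-vanishing of $\ell_{\psi'}$ uniformly for all nontrivial characters $\psi'$. I plan to exhibit a test section $f_0\in\pi^+$ supported in a small neighborhood of $w_0$ in the big cell, for which, after the change of variables above, the integral $\ell_{\psi'}(f_0)$ reduces to a Fourier transform on the abelianization $U_2/[U_2,U_2]\simeq F^4$ of a compactly supported test function against the additive character $\psi'^{-1}$. Since any nontrivial additive character of $F^4$ has a nonzero pairing with some compactly supported test function, this guarantees $\ell_{\psi'}(f_0)\neq 0$ and completes the proof.
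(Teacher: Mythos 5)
Your construction is in substance the same as the paper's: both proofs exploit the open Bruhat cell $Q_2 w_0 Q_2 = Q_2 w_0 U_2$ (note: $U_2$, not $\bar{U}_2$, though your integral is correctly taken over $U_2$), on which the sections of $i_{Q_2}^G(\mu\circ\det)$ that are compactly supported modulo $Q_2$ realize the regular representation $C_c^\infty(U_2)$, and then pair against $\psi'$. The one place where your write-up is both under-justified and unnecessarily heavy is the extension of $\ell_{\psi'}$ from that subspace to all of $\pi^+$: the compactly supported sections are not dense in any relevant sense (they form a proper $U_2$-subrepresentation, not a $G$-subrepresentation), and "standard meromorphic continuation" of a degenerate Jacquet integral at the fixed point $s=0$ would need a genuine argument to rule out a pole. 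But no extension is needed at all. Since $\Hom_{U_2}(\pi^+,\psi')$ is the dual of the twisted coinvariant space $(\pi^+)_{U_2,\psi'}$, and the twisted Jacquet functor is exact on smooth representations, the inclusion $C_c^\infty(U_2)\hookrightarrow \pi^+$ induces an injection $C_c^\infty(U_2)_{U_2,\psi'}\hookrightarrow (\pi^+)_{U_2,\psi'}$; the left-hand side is one-dimensional for every character $\psi'$ (trivial or not), which gives the bound uniformly and makes your separate treatment of the trivial character superfluous. This coinvariant formulation is exactly the paper's proof; your functional $\ell_{\psi'}$ is just the generator of the dual of that one-dimensional space, and your nonvanishing test section (indicator of a small compact open subgroup of $U_2$ on which $\psi'$ is trivial) is fine.
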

\begin{proof}
	By Corollary~\ref{prop: quotient of pi_v+}, it is enough to see that 
	\[
	\dim\left(\Hom_{U_2}( i_{Q_2}^G(\mu \circ \det), \psi')\right) \geq 1.
	\]
	Using the definition of normalized induction, we have 
	\[
	i_{Q_2}^G(\mu \circ \det) = \left\{f: G_2 \to \bb{C} \mid f(qx) = (\mu \circ \det)(q) \delta_{Q_2}^{1/2}(q) f(x) \text{ for } q \in Q_2, \ x \in G  \right\},
	\]
	where we consider locally constant functions. The group $G$ acts on the left of this space as follows: if $g \in G$ and $f \in	i_{Q_2}^G(\mu \circ \det)$, then $(g\cdot f)(x) = f(xg)$ for every $x \in G$. Restrict the representation $i_{Q_2}^G(\mu \circ \det)$ to a representation of $U_2$. Let $C^\infty_c(U_2)$ be the regular representation (consisting on smooth functions with compact support) of $U_2$. Then, we have an injective map of $U_2$-representations
	\[
	C^\infty_c(U_2) \xhookrightarrow{} i_{Q_2}^G(\mu \circ \det).
	\]
    We proceed to describe it. The Bruhat decomposition of $G_2$ with respect to the parabolic subgroup $Q_2$ is 
	\[
	G_2 = Q_2 \cup Q_2 w_\beta Q_2 \cup Q_2 w_{\beta \alpha \beta} Q_2 \cup Q_2 w_{\beta\alpha\beta\alpha\beta} Q_2
	\]
	(see \cite[Page 289]{JR}). Hence, we can find the following $U_2$-invariant subspace of $i_{Q_2}^G(\mu \circ \det)$
    \begin{multline*}
    \left\{ f: Q_2 w_{\beta\alpha\beta\alpha\beta} U_2 \to \bb{C} \mid f(qx) = (\mu \circ \det)(q)\delta_{Q_2}^{1/2}(q)f(x) \text{ for } q \in Q_2, \ x \in  Q_2 w_{\beta\alpha\beta\alpha\beta} U_2 \right\} \\
    \simeq \left\{f: w_{\beta\alpha\beta\alpha\beta} U_2 \to \bb{C} \right\},
    \end{multline*}
    where we again consider locally constant functions. Note that we used that the big cell in the Bruhat decomposition of $G_2$ given above is the one corresponding to $w_{\beta\alpha\beta\alpha\beta}$ and therefore we have a natural bijection $Q_2 w_{\beta\alpha\beta\alpha\beta} U_2\simeq Q_2\times U_2$. If we further restrict the second space to functions of compact support, we obtain the desired $U_2$-representation subspace  of $i_{Q_2}^G(\mu \circ \det)$ isomorphic to the regular representation $C_c^\infty(U_2)$. 
    
    Taking $(U_2, \psi')$-coinvariants, which preserve injectivity by smoothness, we get
    \[
    C_c^\infty(U_2)_{U_2, \psi'} 
    \xhookrightarrow{}
    \left(i_{Q_2}^G(\mu \circ \det)\right)_{U_2,\psi'}.
    \]
    Since, $C_c^\infty(U_2)$ is the regular representation, we have that  $C_c^\infty(U_2)_{U_2, \psi'}$ is $1$-dimensional, and the result follows from there. 	
\end{proof}

\begin{remark}
    Consider the same notation as above and suppose in addition that $\psi'$ is generic. It follows from \cite[Theorem 3]{JR} that $\dim\left(\Hom_{U_2}( i_{Q_2}^G(\mu \circ \det), \psi')\right) \leq 1$. Hence, we have $\dim\left(\Hom_{U_2}( i_{Q_2}^G(\mu \circ \det), \psi')\right) = 1$. 
\end{remark}

\subsection{Vanishing of the non-generic part}\label{subsec: vanishing of the non-generic part}
In this subsection, we show the vanishing of the non-generic part of $\pi^{-}$ when $\chi^2=1$ by proving that the corresponding generic Fourier coefficients along the unipotent $U_2$ are all zero.

Recall that  $J=J_3(K)$, the set of $3\times 3$ Hermitian matrices defined in Section~\ref{subsection: dual pair}. Let $P=MN$ be the Heisenberg maximal parabolic subgroup of $H$, where $H$ is as defined in Section~\ref{subsection: dual pair}, and let $\bar{P}=M\bar{N}$ be the parabolic subgroup opposite to $P$. Let $Z$ (resp. $\bar{Z}$) be the center of $N$ (resp. $\bar{N}$). Then $N/Z$ and $\bar{N}/\bar{Z}$ are commutative, and we have that $N/Z\cong \mathfrak{n}/\mathfrak{z}$ and $\bar{N}/\bar{Z}\cong \bar{\mathfrak{n}}/\bar{\mathfrak{z}}$, where $\mathfrak{n}$, $\mathfrak{z}$, $\bar{\mathfrak{n}}$ and $\bar{\mathfrak{z}}$ denote the Lie algebras of $N$, $Z$, $\bar{N}$ and $\bar{Z}$, respectively. According to \cite[(2.5)]{GS}, we have the following decompositions:
\begin{gather*}
   \mathfrak{n}/\mathfrak{z}=F\oplus J\oplus J^*\oplus F^* \\
   \bar{\mathfrak{n}}/\bar{\mathfrak{z}}=F^*\oplus J^*\oplus J\oplus F.
\end{gather*}
The Killing form on $\mathfrak{h}$ induces a non-degenerate pairing between $N/Z\cong \mathfrak{n}/\mathfrak{z}$ and $\bar{N}/\bar{Z}\cong \bar{\mathfrak{n}}/\bar{\mathfrak{z}}$ which is given by
\begin{align*}
    \langle\left(x, u, u^{*}, x^{*}\right),\left(y^{*}, v^{*}, v, y\right)\rangle=x y^{*}+\langle u, v^{*}\rangle+\langle v, u^{*}\rangle+y x^{*}.
\end{align*}
The following theorem gives a useful description of the minimal representation $\Pi$ of $H$.

\begin{theorem}\cite[Theorem 7.1]{MS}\label{thm: model of min rep}
Let $\Pi$ be the minimal representation of $H$. Let $\Pi_Z$ and $\Pi_N$ be the maximal $Z$-invariant and $N$-invariant quotients of $\Pi$. Then $\Pi_Z$ has a $P$-equivariant filtration
\begin{align*}
    0\to C_c^\infty(\bar{\Omega})\to \Pi_Z \to \Pi_N\to 0,
\end{align*}
where $\bar{\Omega}$ is the smallest nontrivial $M$-orbit in $\bar{N}/\bar{Z}$ and $C_c^\infty(\bar{\Omega})$ denotes the space of locally constant, compactly supported functions on $\bar{\Omega}$. 

Moreover, the action of $P$ on $C_c^\infty(\bar{\Omega})$ is given by
\begin{align*}
\pi(n) f(x)&=\psi(\langle n, x\rangle) f(x), \quad n \in N,\\
\pi(m) f(x)&=|\det(m)|^{1/5} f\left(m^{-1} x m\right), \quad m \in M,
\end{align*}
where $\det$ denotes the determinant of the representation of $M$ on $N/Z$.
\end{theorem}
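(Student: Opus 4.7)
The plan is to deduce this structure theorem from general principles about minimal representations combined with a Mackey/Fourier analysis along the abelian unipotent $N/Z$. Since $\Pi$ is the minimal representation of $H$, its wavefront set is contained in the closure of the minimal nilpotent orbit of $\mathfrak{h}$. The first step is to translate this into a statement about the support of Fourier coefficients of $\Pi$ along $N$, using the grading $\mathfrak{h}=\bar{\mathfrak{n}}\oplus \mathfrak{m}\oplus \mathfrak{n}$ induced by $P$.

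The second step is to analyze the $M$-orbits on $\bar{N}/\bar{Z}\cong F^*\oplus J^*\oplus J\oplus F$. The group $M$ acts with finitely many orbits, stratified by rank, and the minimal (nonzero) orbit $\bar{\Omega}$ is open in a $P$-stable subvariety. The minimality hypothesis on $\Pi$ should force the ``Fourier support'' of $\Pi_Z$, viewed as a sheaf on $\bar{N}/\bar{Z}$ via the non-degenerate pairing with $N/Z$, to lie in $\bar{\Omega}\cup\{0\}$. This is the main technical input and is the step I expect to be the key obstacle, because one has to verify that the higher-rank orbits truly do not contribute, which relies on a careful identification of the associated variety of $\Pi$ with the closure of the minimal orbit and on lifting this identification from the level of the associated graded to the level of the smooth representation itself.

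Once the support is pinned down to $\bar{\Omega}\cup\{0\}$, the short exact sequence follows by standard sheaf-theoretic arguments: functions supported on $\bar{\Omega}$ form the subspace $C_c^\infty(\bar{\Omega})$ (since $\bar{\Omega}$ is open in its closure within the support), and the quotient, corresponding to the fiber at the origin, is by definition $\Pi_N$. The $P$-equivariance of the filtration is automatic, because $P$ normalizes $N$ and hence permutes the Fourier characters in a manner compatible with the $M$-action on $\bar{N}/\bar{Z}$.

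Finally, the third step is to pin down the explicit formulas for the $P$-action on $C_c^\infty(\bar{\Omega})$. The formula $\pi(n)f(x)=\psi(\langle n,x\rangle)f(x)$ is immediate from the Fourier-theoretic identification of $C_c^\infty(\bar{\Omega})$ as a space of Fourier coefficients: the $N$-action by translation on $\Pi_Z$ becomes multiplication by characters on the dual side. The formula for $M$ combines two ingredients: the intrinsic conjugation action of $M$ on $\bar{N}/\bar{Z}$, which produces the argument $m^{-1}xm$, and a scalar factor coming from the transformation of the Haar measure on $\bar{\Omega}$ under $M$-scaling, which must be normalized against $\delta_P^{1/2}$ in order to produce a unitarizable representation; tracking this normalization yields the exponent $1/5$, which reflects the ratio between the modular character of $P$ and the determinant of $M$ acting on $N/Z$ in the type $E_6$ setting. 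Checking this last normalization is straightforward but tedious, and I would verify it by a direct computation at the Lie algebra level using the known action on a highest weight vector.
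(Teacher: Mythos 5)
First, a point of context: the paper does not prove this statement at all --- it is imported verbatim from \cite[Theorem 7.1]{MS} and used as a black box. So the comparison here is between your sketch and the argument in the cited source, which does follow the general strategy you outline (Fourier analysis of $\Pi_Z$ along the abelian quotient $N/Z$, identification of the support with the closure of the minimal $M$-orbit, and a restriction/quotient exact sequence for the open orbit $\bar{\Omega}$ versus the origin).

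That said, your proposal has two genuine gaps, both at the points that carry the actual content of the theorem. (a) The support statement: you assert that minimality of $\Pi$ ``should force'' the twisted Jacquet modules $\Pi_{N,\psi}$ to vanish for $\psi$ outside $\bar{\Omega}\cup\{0\}$, and you correctly flag this as the key obstacle, but you do not supply the bridge. In the $p$-adic setting there is no direct passage from ``wavefront set in the closure of the minimal orbit'' to ``vanishing of $\Pi_{N,\psi}$ off the minimal orbit'': one needs either the M\oe glin--Waldspurger theory of degenerate Whittaker models, or (as in the sources) a direct computation with an explicit model of $\Pi$, or one takes this vanishing as part of the \emph{definition} of minimality, in which case there is nothing to deduce. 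As written, the central step is a restatement of what must be proved. (b) Even granting the support statement, your sheaf-theoretic argument only yields a sub of the form $C_c^\infty(\bar{\Omega},\mathcal{F})$ for some $M$-equivariant local system $\mathcal{F}$ whose stalk at $\bar{n}\in\bar{\Omega}$ is $\Pi_{N,\psi_{\bar{n}}}$. To get scalar-valued $C_c^\infty(\bar{\Omega})$, and to make sense of the formula $\pi(m)f(x)=|\det(m)|^{1/5}f(m^{-1}xm)$ with no extra character of the point stabilizer, you need $\dim\Pi_{N,\psi}=1$ for $\psi\in\bar{\Omega}$ and triviality of the stabilizer action on the stalk. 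This one-dimensionality is precisely the quantitative content of ``minimal'' and is nowhere addressed in your sketch. Finally, the exponent $1/5$ is asserted rather than computed; it is not a consequence of abstract unitarity alone but of the specific weights of $M$ on $\mathfrak{n}$ and $\mathfrak{z}$ in type $E_6$, so it should either be computed or cited.
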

 
Now we consider $G_2\cap P=L_2 U_2=\mathrm{GL}_2U_2$, the Heisenberg parabolic subgroup of $G_2$. Notice that 
$$
\bar{U}_{2}/\bar{Z} \cong F^\ast \oplus F e^{*} \oplus F e\oplus F \subset F^{*} \oplus J^{*} \oplus J \oplus F \cong \bar{N}/\bar{Z},
$$
where $e$ is an element of $J$ that is invariant under the action of $G'=\mathrm{PU}_3\rtimes \mu_2$, normalized by $\langle e^*,e\rangle=3$. An element $(a,b,c,d)\in \bar{U}_2/\bar{Z}$ defines a binary cubic form
\begin{align*}
    ax^3+bx^2y+cxy^2+dy^3,
\end{align*}
which defines a cubic separable algebra if and only if the corresponding $\GL_2$-orbit of $(a,b,c,d)$ is generic. Moreover, the restriction of $\langle\, ,\, \rangle$ defines a non-degenerate pairing between $U_2/Z$ and $\bar{U}_2/\bar{Z}$. Hence, a point $\bar{n}=(a,b,c,d)\in \bar{U}_2/\bar{Z}$ corresponding to a cubic separable algebra $E$ defines a character $\psi_E(x)\coloneqq \psi(\langle x, \bar{n}\rangle)$ on $U_2/Z$.

We define the twisted Jacquet module $\Pi_{U_2,\psi_E}$ to be the maximal quotient of $\Pi$ on which the action of $U_2$ is given by the character $\psi_E$. Theorem~\ref{thm: model of min rep} enables us to describe the $\PU_3$-module $\Pi_{U_2,\psi_E}$ explicitly as stated in the following proposition, which is essentially a summary of \cite[Proposition~2.8]{GS} and \cite[Lemma~2.9]{GS}.

\begin{proposition}\label{prop: X}
Let $E$ be a cubic separable $F$-algebra and let $X_E=\left\{A \in H_{3}^e(K)\;\colon\; f_{A}=f_{E}\right\}$, where 
\begin{itemize}
    \item $H_{3}^e(K)\coloneqq \{A\in M_3(K)\;\colon\; A=eA^\dagger e^{-1}\} $;
    \item $f_{A}$ is the characteristic polynomial of $A \in H_{3}^e(K)$; 
    \item $f_{E}$ is a fixed monic cubic polynomial such that $E \cong F[x] /\left(f_{E}(x)\right)$.
\end{itemize} 
Then, we have the following isomorphism of $\mathrm{PU}_3$-modules:
$$
\Pi_{U_2, \psi_{E}} \cong C_{c}^{\infty}(X_E).
$$
\end{proposition}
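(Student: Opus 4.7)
The plan is to push the explicit model of the minimal representation from Theorem~\ref{thm: model of min rep} through the twisted Jacquet functor $(\cdot)_{U_2,\psi_E}$, following the strategy of \cite[Proposition~2.8, Lemma~2.9]{GS}. First, I would observe that $Z\subset U_2$ and that $\psi_E$, being defined on $U_2/Z$, is trivial on $Z$; consequently $\Pi_{U_2,\psi_E}\cong (\Pi_Z)_{U_2/Z,\psi_E}$. Next, I would apply the exact twisted Jacquet functor to the short exact sequence
\[
0\to C_c^\infty(\bar\Omega)\to\Pi_Z\to\Pi_N\to 0
\]
from Theorem~\ref{thm: model of min rep}. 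The character $\psi_E$ is nontrivial on $U_2/Z$, because $E$ being cubic separable forces the vector $\bar n\in\bar U_2/\bar Z$ defining $\psi_E$ to be nonzero, while $N$ (hence $U_2$) acts trivially on $\Pi_N$; therefore $(\Pi_N)_{U_2/Z,\psi_E}=0$, and we obtain $\Pi_{U_2,\psi_E}\cong C_c^\infty(\bar\Omega)_{U_2/Z,\psi_E}$.

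Using the explicit formula $(\pi(u)f)(x)=\psi(\langle u,x\rangle)f(x)$ from Theorem~\ref{thm: model of min rep}, a standard Mackey-type localization identifies $\Pi_{U_2,\psi_E}$ with $C_c^\infty(Y_E)$, where
\[
Y_E=\{x\in\bar\Omega : \psi(\langle u,x\rangle)=\psi_E(u)\text{ for all }u\in U_2/Z\}.
\]

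The hard step will be identifying $Y_E$ with $X_E$. Writing $x=(y^*,v^*,v,y)\in\bar N/\bar Z=F^*\oplus J^*\oplus J\oplus F$ in Jordan coordinates and pairing it with elements of $U_2/Z\cong F\oplus Fe\oplus Fe^*\oplus F$ (which is symmetric to the description of $\bar U_2/\bar Z$ given in the text), the conditions defining $Y_E$ fix $y^*,y\in F$ and pin down the pairings $\langle e,v^*\rangle$ and $\langle v,e^*\rangle$ in terms of the datum $\bar n$ defining $\psi_E$. Combined with the rank-one condition in the Jordan-algebra picture that characterizes the minimal orbit $\bar\Omega$, this should force $v\in J=H_3^e(K)$ to be a Hermitian matrix with characteristic polynomial exactly $f_E$, and conversely every such matrix should extend uniquely to a point of $Y_E$. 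I expect to carry this out by adapting the analogous computation of \cite[Proposition~2.8]{GS} in the split case; the argument should transfer directly since the rank condition and the pairing are insensitive to the split-versus-nonsplit structure of the Jordan algebra.

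Finally, the $\PU_3$-equivariance is routine: $\PU_3\subset M=\mathrm{Aut}(J)$ preserves the $J$-component in the decomposition of $\bar N/\bar Z$ and acts on it by the standard conjugation action $A\mapsto gAg^{-1}$ on Hermitian matrices, which matches the action on $X_E$ under the above identification.
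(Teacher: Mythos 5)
Your proposal follows essentially the same route as the paper's proof: kill $\Pi_N$ using the nontriviality of $\psi_E$, realize the twisted Jacquet module of $C_c^\infty(\bar\Omega)$ as restriction of functions to the fiber where the character matches, and identify that fiber with $X_E$ by adapting the computation of \cite{GS} (the paper invokes the proof of \cite[Lemma~2.9]{GS} for exactly this step, landing first on $X_E'=\{A\in J: f_{Ae^{-1}}=f_E\}$ and then passing to $X_E$ via $A\mapsto Ae^{-1}$). The argument is correct and no genuinely different idea is involved.
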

\begin{proof}
Since $\psi_E$ is not trivial, we have $(\Pi_N)_{U_2,\psi_E}=0$. Hence, taking $(U_2,\psi_E)$-coinvariants in the short exact sequence in Theorem~\ref{thm: model of min rep}, we obtain that $C_c^\infty(\bar{\Omega})_{U_2,\psi_E}\simeq \Pi_{U_2,\psi_E}$. 

Now we show that $C_c^\infty(\bar{\Omega})_{U_2,\psi_E}\simeq C_{c}^{\infty}(X_E)$. For $n\in U_2$ and $f\in  C_c^\infty(\bar{\Omega})$, we have 
\begin{align*}
\pi(n) f(x)&=\psi(\langle n, x\rangle) f(x) 
\end{align*}
by Theorem \ref{thm: model of min rep}. It follows from the proof of \cite[Lemma 2.9]{GS} that the natural projection map $C_c^\infty(\bar{\Omega})\twoheadrightarrow C_c^\infty(\bar{\Omega})_{U_2,\psi_E}$ can be realized as the restriction map $C_c^\infty(\bar{\Omega})\twoheadrightarrow C_c^\infty(X'_E)$, where
\[
X'_E=\lbrace A\in J \;\colon\; f_{Ae^{-1}}=f_E\rbrace\subseteq \bar{\Omega}.
\]
An element $z\in\PU_3$ acts on $X'_E$ by $A\mapsto zAz^\dagger$. The map $A\mapsto Ae^{-1}$ defines a bijection of $\PU_3$-sets between $X'_E$ and $X_E$, with $\PU_3$ acting on the latter by conjugation.
\end{proof}

In the statement and proof of the following lemma, we regard $H^e_3(K)$, $M_3(K)$ and any cubic separable $F$-algebra $E$ as Jordan algebras with multiplication defined by
\[
x\circ y=\frac{1}{2}(xy+yx).
\]
Of course, on $E$ this agrees with the standard multiplication.

\begin{lemma}\label{lemma: bijection}
    Let $E$ be a cubic separable $F$-algebra. The following $\PU_3$-sets are in natural bijection:
    \begin{enumerate}
        \item[(1)] the set $X_E$ defined above, with $\PU_3$ acting by conjugation;
        \item[(2)] the set of embeddings $i:E \xhookrightarrow{} H^{e}_{3}(K)$ of Jordan algebras over $F$, with $\PU_3$ acting by conjugation;
        \item[(3)] the set of embeddings $i:E\otimes_F K\xhookrightarrow{} M_3(K)$ of $K$-algebras which are compatible with the anti-involution defined by the nontrivial element $c\in\Gal(K/F)$ on each side, given on $E\otimes_F K$ by the natural Galois action and on $M_3(K)$ by $A\mapsto eA^{\dagger}e^{-1}$, with $\PU_3$ acting by conjugation.
    \end{enumerate}
\end{lemma}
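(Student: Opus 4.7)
The plan is to construct explicit mutually inverse maps between the three sets and check $\PU_3$-equivariance. Fix once and for all the generator $\alpha\in E$ corresponding to the image of $x$ in $F[x]/(f_E(x))$, so that $E=F[\alpha]$ as an $F$-algebra.

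First I would treat the bijection $(1)\leftrightarrow(2)$. Given $A\in X_E$, send it to the $F$-linear map $i_A:E\to M_3(K)$ determined by $\alpha\mapsto A$. This is well defined because $f_E(A)=f_A(A)=0$ by the Cayley--Hamilton theorem, so it descends to $F[x]/(f_E(x))=E$. Separability of $E$ implies that $f_E$ has no repeated roots, hence it is the minimal polynomial of $A$, so $\{1,A,A^2\}$ is $F$-linearly independent and $i_A$ is injective. The image lands in $H_3^e(K)$ because $H_3^e(K)$ contains $1$ and is closed under powers of any of its elements: if $A\in H_3^e(K)$, then $e(A^k)^\dagger e^{-1}=(eA^\dagger e^{-1})^k=A^k$. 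Since $E$ is commutative, the Jordan product on $i_A(E)\subset H_3^e(K)$ coincides with the ordinary matrix product, so $i_A$ is a Jordan algebra homomorphism. Conversely, given a Jordan embedding $i\colon E\hookrightarrow H_3^e(K)$, set $A=i(\alpha)$; then $f_E(A)=i(f_E(\alpha))=0$, and since $i$ is injective of image dimension $3$, the characteristic polynomial $f_A$ must equal $f_E$ (both monic of degree $3$ with $f_E\mid f_A$). The assignments $A\mapsto i_A$ and $i\mapsto i(\alpha)$ are manifestly inverse to one another.

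Next I would treat the bijection $(2)\leftrightarrow(3)$. Given a Jordan embedding $i\colon E\hookrightarrow H_3^e(K)$, extend it $K$-linearly to $\tilde\imath\colon E\otimes_F K\to M_3(K)$. To check that $\tilde\imath$ is a $K$-algebra homomorphism, note that the image of $E$ lies in a commutative subalgebra of $M_3(K)$ (being $F[A]$ for $A=i(\alpha)$), so Jordan multiplicativity of $i$ upgrades to ordinary associative multiplicativity, and tensoring with $K$ preserves this. For injectivity, $E\otimes_F K$ is a product of fields (as $E$ is separable), so the kernel would be a nontrivial sum of factors; but $\tilde\imath$ is already injective on $E\otimes 1$, which meets every factor. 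Compatibility with the involutions is checked directly: for $e\in E$ and $k\in K$,
\[
\tilde\imath((e\otimes k)^c)=k^c\,i(e)=k^c\,e\,i(e)^\dagger e^{-1}=e\,(k\,i(e))^\dagger e^{-1}=e\,\tilde\imath(e\otimes k)^\dagger e^{-1},
\]
using that $i(e)\in H_3^e(K)$. Conversely, given $j\colon E\otimes_F K\hookrightarrow M_3(K)$ compatible with the involutions, the fixed points of the involution on $E\otimes_F K$ form exactly $E\otimes 1\simeq E$, and they map into the fixed points of $A\mapsto eA^\dagger e^{-1}$ on $M_3(K)$, which are precisely $H_3^e(K)$. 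Restricting $j$ to $E$ yields a Jordan embedding $E\hookrightarrow H_3^e(K)$, and this is inverse to the $K$-linear extension procedure.

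Finally I would verify $\PU_3$-equivariance, which is uniform across the three descriptions: in all cases the action is conjugation by a representative $g\in\U_3$, and each of the bijections above commutes with it, since $i_{gAg^{-1}}=\mathrm{conj}_g\circ i_A$, the $K$-linear extension of $\mathrm{conj}_g\circ i$ equals $\mathrm{conj}_g\circ\tilde\imath$, and restriction to $c$-fixed points commutes with conjugation because conjugation by $g\in\U_3$ preserves the anti-involution $A\mapsto eA^\dagger e^{-1}$. The only nontrivial verification is the last point, which follows from $g\in\U_3$ satisfying $g e g^\dagger=e$, so $e(gAg^{-1})^\dagger e^{-1}=g\,(eA^\dagger e^{-1})\,g^{-1}$. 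I expect the main obstacle to be cleanly formulating the involution compatibility in $(2)\leftrightarrow(3)$ and the well-definedness of the $K$-algebra structure on $E\otimes_F K$ relative to the Jordan structure on $H_3^e(K)$; both amount to short calculations of the type displayed above, but they must be organized carefully.
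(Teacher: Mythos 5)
Your treatment of $(1)\leftrightarrow(2)$ is the same as the paper's, which compresses it to two sentences (a Jordan-algebra embedding is determined by $i(x)$, and injectivity forces the characteristic polynomial of $i(x)$ to equal $f_E$); your expanded version via Cayley--Hamilton and the remark that the Jordan product on the commutative subalgebra $F[A]$ is the associative product is correct, as is your equivariance check using $geg^\dagger=e$. For $(2)\leftrightarrow(3)$ you take a genuinely different route. The paper base-changes to $K$: it identifies $M_3(K)\otimes_F K\simeq M_3(K)\times M_3(K)$, tracks the Galois action and the anti-involution through this identification, and deduces $H^e_3(K)\otimes_F K\simeq M_3(K)$ with $M_3(K)^{\Gal(K/F)}=H^e_3(K)$, so that extension of scalars and passage to fixed points are inverse to each other essentially by Galois descent. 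Your hands-on construction (extend $K$-linearly one way, restrict to $c$-fixed points the other way) is more elementary and your involution computation is right, though you should avoid the clash between $e\in E$ and the matrix $e$.

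One step as written would fail: to prove injectivity of $\tilde\imath$ you assert that $E\otimes 1$ meets every simple factor of $E\otimes_F K$ nontrivially. This is false when $K$ embeds in $E$, e.g.\ $E=F\times K$, where $E\otimes_F K\simeq K\times K\times K$ via $(a,b)\otimes k\mapsto (ak,bk,b^ck)$ and $E\otimes 1=\lbrace (a,b,b^c)\rbrace$ has zero intersection with the middle factor. The conclusion still holds, and a clean repair uses exactly the involution compatibility you already established: $\ker\tilde\imath$ is an ideal of $E\otimes_F K$ stable under $1\otimes c$ (since the anti-involution on $M_3(K)$ is injective), hence by descent it equals $I_0\otimes_F K$ with $I_0=\ker\tilde\imath\cap(E\otimes 1)=\ker i=0$. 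Alternatively, since $A=i(\alpha)$ satisfies $A=eA^\dagger e^{-1}$, its minimal polynomial over $K$ is fixed by conjugation and so lies in $F[x]$; injectivity of $i$ then forces it to have degree $3$, so $\dim_K K[A]=3$ and $\tilde\imath$ is injective.
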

\begin{proof}
    An embedding $i:E\xhookrightarrow{} H^e_3(K)$ of Jordan algebras over $F$ is determined by the image of the element $x\in E\cong F[x]/(f_E(x))$. Since $i$ is an embedding, the characteristic polynomial of $i(x)$ must be equal to $f_E$. This gives a natural bijection between (1) and (2).
    
    Observe that there is an isomorphism of $K$-algebras
    \[
    M_3(K)\otimes_F K\cong M_3(K)\times M_3(K)
    \]
    given on pure tensors by $A\otimes a\mapsto (aA, a\bar{A})$. Under this isomorphism, the action of the nontrivial element $c\in\Gal(K/F)$, which acts on $M_3(K)\otimes_F K$ via its natural action on the right factor $K$, is given on $M_3(K)\times M_3(K)$ by $(A,B)\mapsto (\bar{B},\bar{A})$. Moreover, the anti-involution on $M_3(K)\otimes_F K$ defined by $A\otimes a\mapsto eA^\dagger e^{-1}\otimes a$ corresponds to the anti-involution on $M_3(K)\times M_3(K)$ defined by $(A,B)\mapsto (e B^T e^{-1}, e A^T e^{-1})$. Therefore, the subspace $H^e_3(K)\otimes_F K$ maps to
    \[
    \lbrace (A, e A^T e^{-1})\;\colon\; A\in M_3(K)\rbrace \subseteq M_3(K)\times M_3(K),
    \]
    and, by projection onto the first factor, we obtain an isomorphism $H^e_3(K)\otimes_F K\cong M_3(K)$ of Jordan algebras over $K$. Moreover, under this isomorphism, the nontrivial element $c\in\Gal(K/F)$ acts on $M_3(K)$ by $A\mapsto e A^\dagger e^{-1}$, so that $M_3(K)^{\Gal(K/F)}$ is the set of matrices $H^e_3(K)\subseteq H^e_3(K)\otimes_F K\cong M_3(K)$. It follows from these remarks that there is a natural bijection between (2) and (3).
\end{proof}

We now prove that the sets appearing in the previous lemma are always non-empty.

\begin{lemma}
    There exists an embedding $i:E\otimes_F K\xhookrightarrow{} M_3(K)$ of $K$-algebras compatible with the anti-involution defined by the nontrivial element $c\in\Gal(K/F)$ on each side, given on $E\otimes_F K$ by the natural Galois action and on $M_3(K)$ by $A\mapsto eA^{\dagger}e^{-1}$.
\end{lemma}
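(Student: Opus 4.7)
The plan is to invoke the equivalence between sets (1) and (3) in Lemma~\ref{lemma: bijection}: an embedding $i\colon E\otimes_F K \hookrightarrow M_3(K)$ compatible with the anti-involutions corresponds bijectively to a matrix $A = i(\alpha\otimes 1) \in H^e_3(K)$ whose characteristic polynomial equals the minimal polynomial $f_E$ of a primitive element $\alpha \in E$. Such a primitive element exists because $F$ is infinite and $E$ is étale. Concretely, once such an $A$ is given, sending $\alpha\otimes 1 \mapsto A$ and $k \mapsto kI$ for $k\in K$ defines a $K$-algebra homomorphism $K[x]/(f_E) \cong E\otimes_F K \to M_3(K)$; it is injective because separability of $f_E$ forces the minimal polynomial of $A$ to coincide with its characteristic polynomial, and compatibility with the involutions follows from $A\in H^e_3(K)$ together with the obvious compatibility on scalars. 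Thus the lemma reduces to showing that $X_E$ is nonempty.

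First I will parametrize $H^e_3(K)$ by imposing the condition $A = eA^{\dagger}e^{-1}$ on a generic $3\times 3$ matrix with $K$-entries. A direct computation shows that the general element of $H^e_3(K)$ has the form
\[
A = \begin{pmatrix} a & p & q \\ \bar{v} & b & \bar{p} \\ u & v & \bar{a} \end{pmatrix}, \qquad a,p,v\in K,\ b,q,u\in F.
\]
Expanding $\det(xI - A)$ expresses the coefficients of the characteristic polynomial as $F$-valued polynomial expressions in the parameters, involving $\Tr_{K/F}$ and $\Norm_{K/F}$ applied to $a$, $p$, $v$ and certain products such as $a\bar{p}v$ and $p\bar{v}$.

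Writing $f_E(x) = x^3 + \lambda x^2 + \mu x + \nu$ with $\lambda,\mu,\nu\in F$, the construction then splits into two cases. When $\lambda\mu \neq \nu$, the choice $a = v = 0$, $p = 1$, $b = -\lambda$, $u = \lambda\mu - \nu$ and $q = -\mu/(\lambda\mu - \nu)$ yields a matrix $A \in H^e_3(K)$ whose characteristic polynomial is $f_E$, as a direct expansion of $\det(xI - A)$ confirms. When $\lambda\mu = \nu$, separability of $f_E$ forces $\mu \neq 0$, and the simpler choice $a = p = v = 0$, $b = -\lambda$, $q = 1$, $u = -\mu$ gives
\[
A = \begin{pmatrix} 0 & 0 & 1 \\ 0 & -\lambda & 0 \\ -\mu & 0 & 0 \end{pmatrix},
\]
whose characteristic polynomial $(x + \lambda)(x^2 + \mu) = x^3 + \lambda x^2 + \mu x + \lambda\mu$ equals $f_E$.

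The only real difficulty is bookkeeping in the general characteristic polynomial formula, and the case split is forced because the first construction requires $u\neq 0$, equivalently $\lambda\mu\neq\nu$. Combining both cases yields, for every separable cubic $f_E$, an explicit $A\in H^e_3(K)$ with $f_A = f_E$, and thereby the desired embedding $i$.
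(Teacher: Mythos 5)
Your proof is correct, but it takes a genuinely different route from the paper. The paper's proof is conceptual: it equips $E\otimes_F K$ with the Hermitian pairing $\langle x\otimes a, y\otimes b\rangle_\lambda=\lambda ab^c\Tr(xy)$, notes that for a suitable choice of $\lambda\in F^\times$ this $3$-dimensional Hermitian space is isomorphic to $(K^3,\Phi_3=e)$ (using that scaling an odd-dimensional Hermitian form by a non-norm changes its isomorphism class, so one of the two classes of $\lambda$ works), and then takes the regular representation $E\otimes_F K\hookrightarrow \mathrm{End}_K(E\otimes_F K)\simeq M_3(K)$; compatibility with the involutions is automatic because both amount to taking adjoints for the respective pairings. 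Your argument instead reduces, via the (independently established) bijection of Lemma~\ref{lemma: bijection}, to exhibiting an explicit element of $X_E$, i.e.\ a matrix in $H^e_3(K)$ with characteristic polynomial $f_E$. I checked your parametrization of $H^e_3(K)$ (the condition $A=e\bar A^Te$ reads $a_{ij}=\overline{a_{4-j,4-i}}$, which gives exactly your shape) and both determinant computations: in the case $\lambda\mu\neq\nu$ the cofactor expansion gives $x^3+\lambda x^2 -u q x -u(1+q\lambda)=x^3+\lambda x^2+\mu x+\nu$ with your choices, and in the case $\lambda\mu=\nu$ the block matrix visibly has characteristic polynomial $(x+\lambda)(x^2+\mu)=f_E$ (the remark that $\mu\neq 0$ is not actually needed there). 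The reduction step is also sound: separability of $f_E$ forces the minimal polynomial of $A$ to equal $f_E$, so $K[x]/(f_E)\to M_3(K)$ is injective, and compatibility with the anti-involution need only be checked on the generators $x\otimes 1$ and $1\otimes K$ since the source is commutative. The trade-off: the paper's argument is shorter, case-free, and generalizes beyond rank $3$, but quietly uses the classification of Hermitian forms; yours is entirely elementary and explicit, at the cost of a case split and some bookkeeping, and it essentially re-proves one direction of Lemma~\ref{lemma: bijection} along the way.
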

\begin{proof}
    For any $\lambda\in F^\times$, we define the following Hermitian pairing on the $K$-vector space $E\otimes_F K$. For $x \otimes a, y \otimes b \in E\otimes_F K$, we define
    \[
    \langle x \otimes a,y \otimes b \rangle_\lambda = \lambda ab^c \Tr(xy),
    \]
    where $\Tr:E \to F$ denotes the trace form, and we extend the pairing additively. Choosing a suitable $\lambda$, the Hermitian space $(E\otimes_F K, \langle\,,\,\rangle_\lambda)$ is isomorphic to the Hermitian space $(V=K^3, \Phi_3=e)$. This provides an embedding
    \[
    E\otimes_F K \xhookrightarrow{} \mathrm{End}_K(E\otimes_F K) \simeq \mathrm{End}_K(V)=M_3(K).
    \]
    The actions of $c\in\Gal(K/F)$ on $E\otimes_F K$ and on $M_3(K)$ defined in the statement amount to taking the adjoint with respect to the corresponding Hermitian pairing. Therefore, the previous embedding is compatible with the action of $c$.
    
\end{proof}

Given a cubic separable $F$-algebra $E$, let $\underline{T}_{E,K}$ denote the algebraic group over $F$ such that, for an $F$-algebra $R$,
\[
\underline{T}_{E,K}(R)=\lbrace x\in E\otimes_F R \otimes_F K \;\colon\; \norm_{E\otimes_F R\otimes_F K/E\otimes_F R}(x)=1\rbrace,
\]
and let $T_{E,K}$ denote the corresponding group of $F$-points.
An embedding $i':E\otimes_F K\xhookrightarrow{} M_3(K)$ compatible with the anti-involution defined by the nontrivial element $c\in\Gal(K/F)$ on each side, as described in the previous lemma, defines an embedding of algebraic groups $i:\underline{T}_{E,K}\xhookrightarrow{} \underline{\U}_3$. For an $F$-algebra $R$, the corresponding map on $R$-points is the map
\[
(E\otimes_F K\otimes_F R)^1\xhookrightarrow{} \underline{\U}_3(R)
\]
obtained by restricting the map induced by $i'$ to norm-$1$ elements. Let $i'_0:E\otimes_F K\xhookrightarrow{} M_3(K)$ be an embedding compatible with the action of $c\in\Gal(K/F)$ and let $i_0$ be the corresponding embedding of algebraic groups $\underline{T}_{E,K}\xhookrightarrow{} \underline{\U}_3$. Let $\tilde{X}_E$ denote the set of embeddings of algebraic groups $\underline{T}_{E,K}\xhookrightarrow{} \underline{\U}_3$ which are conjugate to $i_0$ by an element of $\U_3(\bar{F})$.
The group $\PU_3$ acts on $\tilde{X}_E$ by conjugation.

\begin{propo}\label{prop: bijection}
    Let $E$ be a cubic separable $F$-algebra. There is a bijection of $\PU_3$-sets between $X_E$ and $\tilde{X}_E$.
\end{propo}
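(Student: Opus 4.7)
The plan is to construct a natural $\PU_3$-equivariant map $\Phi: X_E \to \tilde X_E$ by restriction to norm-$1$ elements and to exhibit an inverse $\Psi: \tilde X_E \to X_E$ via Galois descent, using the centralizer of a maximal torus to recover the algebra embedding.

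First, using the identification provided by Lemma~\ref{lemma: bijection}(3), I regard $X_E$ as the set of $K$-algebra embeddings $i': E\otimes_F K\hookrightarrow M_3(K)$ compatible with the anti-involutions. Given such an $i'$, for every $F$-algebra $R$ the induced map $(E\otimes_F K\otimes_F R)^{\times}\hookrightarrow \GL_3(K\otimes_F R)$ carries norm-$1$ elements into $\underline{\U}_3(R)$, since the compatibility with the anti-involutions forces the involution on $M_3(K\otimes_F R)$ to act on the image of norm-$1$ elements by inversion. This produces an embedding of algebraic $F$-groups $\Phi(i') := i: \underline{T}_{E,K} \hookrightarrow \underline{\U}_3$ which lies in $\tilde X_E$: indeed, any two such algebra embeddings are $\U_3(\bar F)$-conjugate, as follows from the Skolem--Noether theorem together with an adjustment ensuring that the conjugating element preserves the Hermitian form. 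The $\PU_3$-equivariance of $\Phi$ is clear from the construction.

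For the inverse $\Psi$, given $j\in \tilde X_E$, I choose $g\in \U_3(\bar F)$ with $j = g\,i_0\,g^{-1}$ and set $\tilde i := g\,i_0'\,g^{-1}: E\otimes_F K\otimes_F \bar F \to M_3(K\otimes_F \bar F)$; this is compatible with the anti-involutions because $i_0'$ is and $g\in \U_3(\bar F)$. The main point is to show that $\tilde i$ descends to an $F$-morphism, i.e.\ that it is $\Gal(\bar F/F)$-invariant. For $\sigma\in \Gal(\bar F/F)$ one computes $\sigma(\tilde i)(x) = \sigma(g)\,i_0'(x)\,\sigma(g)^{-1}$, so the invariance is equivalent to $g^{-1}\sigma(g)$ commuting with the image of $i_0'$. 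Now $\sigma(j)=j$ because $j$ is defined over $F$, and this yields $g^{-1}\sigma(g) \in Z_{\underline{\U}_3}(i_0(\underline{T}_{E,K}))(\bar F)$.

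The main obstacle is to show that this centralizer is contained in $i_0'(E\otimes_F K\otimes_F \bar F)$, after which the commutativity of that algebra finishes the argument and shows that $\tilde i$ descends. I would obtain this from the fact that $i_0(\underline{T}_{E,K})$ is a maximal $F$-torus of $\underline{\U}_3$ (its $F$-dimension equals $3$, which matches the absolute rank of $\underline{\U}_3$), so its centralizer in $\underline{\U}_3$ equals $i_0(\underline{T}_{E,K})$ itself, which visibly lies in $i_0'(E\otimes_F K\otimes_F \bar F)^{\times}$. Once this is set up, checking that $\Phi$ and $\Psi$ are mutually inverse is routine: the composition $\Psi\circ \Phi$ recovers $i'$ because a $K$-algebra embedding of $E\otimes_F K$ into $M_3(K)$ is determined by its restriction to norm-$1$ elements after base change to $\bar F$, and $\Phi\circ \Psi = \mathrm{id}$ follows directly from the definitions. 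The $\PU_3$-equivariance of $\Psi$ is also direct from the construction and the independence (up to the action of $i_0(\underline{T}_{E,K})(\bar F)$, which commutes with $i_0'$) of the choice of $g$.
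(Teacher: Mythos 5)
Your proposal is correct and follows essentially the same route as the paper: both arguments reduce via Lemma~\ref{lemma: bijection} to comparing the $\U_3(\bar F)$-conjugates of the base points $i_0'$ and $i_0$ and checking that the Galois descent conditions and stabilizers coincide. Your explicit verification that the descent conditions match — namely that $i_0(\underline{T}_{E,K})$ is a maximal torus of $\underline{\U}_3$, so its centralizer is itself and hence lands in the commutative algebra $i_0'(E\otimes_F K\otimes_F\bar F)$ — is exactly the point the paper leaves implicit when it asserts that the two cocycle conditions are "the same condition."
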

\begin{proof}
    By Lemma~\ref{lemma: bijection}, we just need to show that there is a bijection of $\PU_3$-sets between $\tilde{X}_E$ and the set of embeddings of $K$-algebras $i':E\otimes_F K\xhookrightarrow{} M_3(K)$ which are compatible with the anti-involution defined by the nontrivial element $c\in\Gal(K/F)$ on each side. Such a map can be extended to an embedding of $\bar{F}$-algebras
    \[
    E\otimes_F K\otimes_F \bar{F} \xhookrightarrow{} M_3(K\otimes_F \bar{F})
    \]
    compatible with the action of $c\in\Gal(K/F)$. Giving such a map is equivalent to giving an embedding of $\bar{F}$-algebras
    \[
    \bar{F}\times\bar{F}\times\bar{F} \xhookrightarrow{} M_3(\bar{F}).
    \]
    Therefore, it becomes clear that, after choosing an embedding $K\xhookrightarrow{} \bar{F}$, any two embeddings $i_1',i_2':E\otimes_F K\xhookrightarrow{} M_3(K)$ compatible with the action of $c\in\Gal(K/F)$ are conjugate by an element of $\U_3(\bar{F})\simeq\GL_3(\bar{F})$. In particular, any embedding $i':E\otimes_F K\xhookrightarrow{} M_3(K)$ compatible with the action of $c\in\Gal(K/F)$ is conjugate to $i_0'$ by an element of $\U_3(\bar{F})$. Moreover, if $a\in \U_3(\bar{F})$, then $a\cdot i_0' \cdot a^{-1}$ defines an embedding $E\otimes_F K\xhookrightarrow{} M_3(K)$ compatible with the action of $c\in\Gal(K/F)$ if and only if the cocycle $\tau\mapsto a^{-1}\cdot\tau(a)$ defines a cohomology class in
    \[
    \ker\left(H^1(F,\underline{T}_{E,K})\longrightarrow H^1(F,\underline{\U}_3)\right).
    \]
    This is the same condition that $a\in \U_3(\bar{F})$ must satisfy in order that $a\cdot i_0\cdot a^{-1}$ defines an embedding of algebraic groups $\underline{T}_{E,K}\xhookrightarrow{} \underline{\U}_3$. Furthermore, $a\in \U_3(\bar{F})$ fixes $i_0'$ if and only if $a\in i_0(\underline{T}_{E,K}(\bar{F}))$ 
    and thus if and only if it fixes $i_0$. Hence, it follows that the map defined above sending an embedding $i':E\otimes_F K\xhookrightarrow{} M_3(K)$ compatible with the action of $c\in\Gal(K/F)$ to an embedding of algebraic groups $i:\underline{T}_{E,K}\xhookrightarrow{} \underline{\U}_3$ provides the desired bijection.
\end{proof}

Let $\tilde{\cl{T}}_{E,K}$ denote set of $\PU_3$-orbits of $\tilde{X}_E$. 

\begin{corollary}
    Let $E$ be a cubic separable $F$-algebra. Then,
    \[
    \Pi_{U_2,\psi_E} \cong \bigoplus_{i \in \tilde{\mathcal{T}}_{E, K}} \mathrm{c}\mbox{-}{\Ind}_{i(T_{E, K})}^{\mathrm{PU}_{3}} \mathbb{1}.
    \]
\end{corollary}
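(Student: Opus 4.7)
The plan is to chain together the two structural identifications just established. By Proposition~\ref{prop: X} we have $\Pi_{U_2,\psi_E}\cong C_c^\infty(X_E)$ as $\PU_3$-modules, and by Proposition~\ref{prop: bijection} there is a $\PU_3$-equivariant bijection $X_E\cong \tilde{X}_E$. Composing these yields $\Pi_{U_2,\psi_E}\cong C_c^\infty(\tilde{X}_E)$, so the problem reduces to decomposing the latter space according to the $\PU_3$-orbit structure on $\tilde{X}_E$.

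Next I would partition $\tilde{X}_E=\bigsqcup_{i\in\tilde{\cl{T}}_{E,K}} \PU_3\cdot i$ and verify that each orbit is open (hence clopen) in $\tilde{X}_E$. Since $\tilde{X}_E$ is the set of $F$-points of an algebraic $F$-variety on which $\underline{\U}_3$ acts transitively over $\bar{F}$, with stabilizer the torus $\underline{T}_{E,K}$, the orbit map is a smooth algebraic morphism, and hence induces an open map on $F$-points. Each $\U_3$-orbit, and a fortiori each $\PU_3$-orbit (the two coincide since the center of $\U_3$ acts trivially by conjugation), is therefore open in $\tilde{X}_E$. The index set $\tilde{\cl{T}}_{E,K}$ is parametrized by the finite pointed set $\ker\!\left(H^1(F,\underline{T}_{E,K})\rightarrow H^1(F,\underline{\U}_3)\right)$, as already observed in the proof of Proposition~\ref{prop: bijection}. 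This clopen decomposition produces
\[
C_c^\infty(\tilde{X}_E)=\bigoplus_{i\in\tilde{\cl{T}}_{E,K}} C_c^\infty(\PU_3\cdot i)
\]
as $\PU_3$-modules.

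Finally, I would identify each summand as a compact induction. For $i\in\tilde{\cl{T}}_{E,K}$, since $E$ is cubic separable we have that $E\otimes_F K$ is an étale cubic $K$-algebra, so $i(\underline{T}_{E,K})$ is a maximal torus of $\underline{\U}_3$. Its centralizer in $\U_3$ is therefore itself, and passing to $\PU_3$ (using that $Z(\U_3)\subset i(T_{E,K})$) the stabilizer of $i$ equals the image of $i(T_{E,K})$ in $\PU_3$, which by the mild abuse of notation already present in the statement we continue to denote by $i(T_{E,K})$. The resulting identification $\PU_3\cdot i\simeq \PU_3/i(T_{E,K})$ then yields
\[
C_c^\infty(\PU_3\cdot i)\cong \mathrm{c}\mbox{-}{\Ind}_{i(T_{E,K})}^{\PU_3} \mathbb{1}
\]
by the standard realization of compact induction as the space of compactly supported smooth sections on a homogeneous space. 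Summing over $i\in\tilde{\cl{T}}_{E,K}$ gives the claimed decomposition.

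The main obstacle I foresee is the openness of $\PU_3$-orbits on $\tilde{X}_E$: the argument requires translating carefully between the algebraic description of $\tilde{X}_E$ (as a twisted form of the variety $\underline{\U}_3/\underline{T}_{E,K}$) and its $F$-analytic structure in order to apply a $p$-adic submersion/open mapping argument. Once this clopen orbit decomposition is established, the centralizer computation for a maximal torus and the identification of $C_c^\infty$ on a homogeneous space with a compact induction are standard.
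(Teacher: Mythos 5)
Your proof is correct and follows essentially the same route as the paper's: combine Proposition~\ref{prop: X} with the $\PU_3$-equivariant bijection $X_E\cong\tilde{X}_E$ of Proposition~\ref{prop: bijection}, decompose into orbits, and identify each orbit contribution with a compact induction from the stabilizer $i(T_{E,K})$. The paper's proof is just a terser version of this; your added justifications (openness of the $p$-adic orbits via smoothness of the orbit map, and the self-centralizing property of the maximal torus $i(\underline{T}_{E,K})$) are exactly the details it leaves implicit.
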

\begin{proof}
    It follows from Proposition~\ref{prop: X} that
    \[
    \Pi_{U_2, \psi_{E}} \cong C_{c}^{\infty}(X_E).
    \]
    Proposition~\ref{prop: bijection} establishes a bijection of $\PU_3$-sets between $X_E$ and $\tilde{X}_E$. The stabilizer of an embedding of algebraic groups $i:\underline{T}_{E,K} \hookrightarrow \underline{\U}_3$ for the conjugation action of $\PU_3$ is $i(T_{E,K})$. Therefore, decomposing $X_E$ into $\PU_3$-orbits, we obtain
    \[
    C_{c}^{\infty}(X_E) \cong \bigoplus_{i \in \tilde{\mathcal{T}}_{E, K}} \mathrm{c}\mbox{-}{\Ind}_{i(T_{E, K})}^{\mathrm{PU}_{3}} \mathbb{1}.
    \]
\end{proof}

\begin{corollary}
    Let $E$ be a cubic separable $F$-algebra. Then,
    \[
    \dim \Hom_{U_2}(\Theta_{\PU_3}(\sigma^+),\psi_E)+\dim \Hom_{U_2}(\Theta_{\PU_3}(\sigma^-),\psi_E)=1.
    \]
\end{corollary}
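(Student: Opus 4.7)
The plan is to combine the adjunction defining the theta lift with the description of $\Pi_{U_2,\psi_E}$ given in the previous corollary in order to rewrite the left hand side as a sum of toric periods, and then to invoke the epsilon dichotomy for such toric periods proved in \cite{BFGYYZ}.

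First, for each $?\in\{+,-\}$, one has
\[
\Hom_{U_2}(\Theta_{\PU_3}(\sigma^?),\psi_E)\cong \Hom_{\PU_3\times U_2}(\Pi,\sigma^?\boxtimes \psi_E)\cong \Hom_{\PU_3}(\Pi_{U_2,\psi_E},\sigma^?).
\]
The first isomorphism is Schur's lemma applied to the maximal $\sigma^?$-isotypic quotient $\Theta_{\PU_3}(\sigma^?)\boxtimes\sigma^?$ of $\Pi$ along the $\PU_3$-factor, using that $\sigma^?$ is irreducible and admissible; the second is the standard adjunction between taking $(U_2,\psi_E)$-coinvariants and restriction. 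Feeding in the previous corollary and applying Frobenius reciprocity for compact induction yields
\[
\Hom_{U_2}(\Theta_{\PU_3}(\sigma^?),\psi_E)\cong \bigoplus_{i\in\tilde{\cl{T}}_{E,K}}\Hom_{i(T_{E,K})}(\mathbb{1},\sigma^?)=\bigoplus_{i\in\tilde{\cl{T}}_{E,K}}(\sigma^?)^{i(T_{E,K})}.
\]

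Summing over $?\in\{+,-\}$, the desired identity becomes
\[
\sum_{i\in\tilde{\cl{T}}_{E,K}}\bigl(\dim (\sigma^+)^{i(T_{E,K})}+\dim(\sigma^-)^{i(T_{E,K})}\bigr)=1.
\]
The left hand side is a sum of toric periods attached to the embeddings of $T_{E,K}$ into $\PU_3$ determined by the cubic separable algebra $E$, evaluated on the Howe--PS packet $\{\sigma^+,\sigma^-\}$. Since $\sigma^+$ and $\sigma^-$ are by construction the theta lifts of the trivial character from the two pure inner forms $\U(W^+)$ and $\U(W^-)$ of $\U_1$, each such period admits an explicit expression in terms of local epsilon factors of twists of $\chi$, in the spirit of the unitary GGP dichotomy. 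The key input to quote from \cite{BFGYYZ} is that for each $E$, exactly one pair $(i,?)\in\tilde{\cl{T}}_{E,K}\times\{+,-\}$ contributes a nonzero toric period, and that contribution is one-dimensional.

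The first two steps are essentially formal: the adjunction is standard and Frobenius merely unpacks the compact induction. The substantive content lies in the last step, so the main obstacle will be to align our conventions (in particular the choice of splitting characters, and under the running hypothesis $\chi^2=1$ the identification $\chi=\mu\circ\norm_{K/F}$) with those of \cite{BFGYYZ} so that their epsilon-factor count applies verbatim to yield precisely one contribution of dimension one.
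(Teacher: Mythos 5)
Your proof is correct and follows essentially the same route as the paper: decompose $\Pi_{U_2,\psi_E}$ via the previous corollary, apply Frobenius reciprocity to reduce to toric periods, and invoke the epsilon dichotomy of \cite{BFGYYZ} (their Corollary~4.2) for the count. One small caveat: the left-adjunction form of Frobenius reciprocity for $\mathrm{c}\mbox{-}\Ind$ that you quote literally requires the subgroup to be open, whereas $i(T_{E,K})$ is only closed (though compact); the paper instead dualizes to get $\Hom_{i(T_{E,K})}((\sigma^{?})^{\vee},\mathbb{1})$ and then uses unitarity of $\sigma^{\pm}$, but since $T_{E,K}$ is compact the resulting dimensions agree and your conclusion stands.
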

\begin{proof}
    It follows from the previous corollary and an application of Frobenius reciprocity for compact induction that the left-hand side is equal to
    \[
   \sum_{{i \in \tilde{\mathcal{T}}_{E, K}}}\left( \dim\Hom_{i(T_{E,K})}((\sigma^+)^\vee,\bb{1})+\dim\Hom_{i(T_{E,K})}((\sigma^-)^\vee,\bb{1})\right).
    \]
    Since both $\sigma^+$ and $\sigma^-$ are unitary representations, this sum can be rewritten as
    \[
    \sum_{{i \in \tilde{\mathcal{T}}_{E, K}}}\left( \dim\Hom_{i(T_{E,K})}(\sigma^+,\bb{1})+\dim\Hom_{i(T_{E,K})}(\sigma^-,\bb{1})\right),
    \]
    which is equal to $1$ by \cite[Corollary~4.2]{BFGYYZ}.
\end{proof}

\begin{propo}\label{prop: nongeneric vanishing}
    If $\chi^2=1$, the representation $\pi^-$ does not have a nonzero non-generic subquotient.
\end{propo}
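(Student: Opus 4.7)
The plan is to combine the corollary immediately preceding the proposition with Proposition~\ref{prop: coinvariants of pi plus} to force the vanishing of all generic Heisenberg twisted Jacquet modules of $\pi^-$, and then to apply the classification of tempered representations of $G_2$ from \cite{BS} in order to rule out non-generic subquotients.

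First I would note that $\pi^+$ is a quotient of $\Theta_{\PU_3}(\sigma^+)$, so Proposition~\ref{prop: coinvariants of pi plus} gives
\[
\dim\Hom_{U_2}(\Theta_{\PU_3}(\sigma^+),\psi_E)\geq \dim\Hom_{U_2}(\pi^+,\psi_E)\geq 1
\]
for every generic character $\psi_E$ of $U_2$ attached to a cubic separable $F$-algebra $E$. The corollary above then forces $\dim\Hom_{U_2}(\Theta_{\PU_3}(\sigma^-),\psi_E)=0$, and since $\pi^-$ is a quotient of $\Theta_{\PU_3}(\sigma^-)$ the same vanishing holds with $\Theta_{\PU_3}(\sigma^-)$ replaced by $\pi^-$. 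Because the twisted Jacquet functor $\pi\mapsto\pi_{U_2,\psi_E}$ is exact on smooth representations of $G$, this vanishing extends to every subquotient of $\pi^-$.

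Now suppose for contradiction that $\pi^-$ has a nonzero irreducible non-generic subquotient $\tau$. Since $\sigma^-$ is supercuspidal, \cite[Remark~4.4]{BS} implies that every irreducible subquotient of $\Theta_{\PU_3}(\sigma^-)$, and in particular $\tau$, is tempered. To produce a contradiction I would invoke the classification of tempered non-generic representations of $G_2$ in \cite[Proposition~4.15]{BS}, which realizes every such $\tau$ as a theta lift from $\PU_3\rtimes\bb{Z}/2\bb{Z}$, together with the non-vanishing criterion of Theorem~\ref{thm: nonvanishing} to produce a cubic separable $F$-algebra $E$ for which $\tau_{U_2,\psi_E}\neq 0$, contradicting the vanishing established in the previous paragraph.

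The main obstacle is this last step: to verify uniformly that for every possible tempered non-generic subquotient $\tau$ of $\pi^-$ there is some generic character $\psi_E$ with $\tau_{U_2,\psi_E}\neq 0$. This will require combining the structural results of \cite{BS} describing the possible origins of $\tau$ on the $\PU_3\rtimes\bb{Z}/2\bb{Z}$ side with a see-saw argument analogous to the one carried out in Section~\ref{subsec: see saw argument}, in order to translate the existence of a nonzero generic Fourier coefficient of $\tau$ into a non-vanishing statement for a suitable period of the corresponding representation on the unitary side.
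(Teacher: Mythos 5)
Your first paragraph is exactly the paper's first step: since $\pi^+$ is a quotient of $\Theta_{\PU_3}(\sigma^+)$, Proposition~\ref{prop: coinvariants of pi plus} together with the corollary forces $\dim\Hom_{U_2}(\Theta_{\PU_3}(\sigma^-),\psi_E)=0$, hence $\Hom_{U_2}(\pi^-,\psi_E)=0$ for every generic $\psi_E$, and by exactness of the twisted Jacquet functor this passes to all subquotients. That part is fine.

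The genuine gap is in your concluding step, and you flag it yourself as the ``main obstacle'': you still need to know that a nonzero tempered non-generic irreducible subquotient $\tau$ of $\pi^-$ must have $\tau_{U_2,\psi_E}\neq 0$ for some generic $\psi_E$, and your proposal does not establish this. The route you sketch is also misdirected: \cite[Proposition~4.15]{BS} is not a classification of all tempered non-generic representations of $G_2$ as theta lifts from $\PU_3\rtimes\bb{Z}/2\bb{Z}$ (in this paper it is used only to get uniqueness of the irreducible quotient of $\pi^-$ in the case $\chi^2\neq 1$), and Theorem~\ref{thm: nonvanishing} is a criterion for non-vanishing of $\Theta_{\PU_3}(\tau)$ for $\tau$ a representation of $\PU_3$, not a device for producing nonzero $(U_2,\psi_E)$-coinvariants of a representation of $G$; there is no worked-out mechanism, see-saw or otherwise, in your sketch that converts these into the needed Fourier-coefficient statement. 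The paper closes the argument much more directly: by \cite[Lemma~4.13]{BS}, the vanishing of all generic twisted coinvariants $\pi^-_{U_2,\psi_E}$ forces every non-generic subquotient of $\pi^-$ to be finite-dimensional, and since $\pi^-$ is tempered by \cite[Remark~4.4]{BS} it has no finite-dimensional subquotients. In effect, the statement you propose to prove by a new classification-plus-see-saw argument is precisely the content of \cite[Lemma~4.13]{BS} combined with temperedness, and without that input (or a complete substitute for it) your proof does not go through.
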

\begin{proof}
    Combining Proposition~\ref{prop: coinvariants of pi plus} and the previous corollary, it follows that, for all character $\psi_E$ of $U_2$ corresponding to a cubic separable $F$-algebra $E$ as above,
    \[
    \Hom_{U_2}(\pi^-,\psi_E)=0.
    \]
    Therefore, it follows from \cite[Lemma~4.13]{BS} that any non-generic subquotient of $\pi^-$ is finite-dimensional. Since $\pi^-$ is tempered by \cite[Remark~4.4]{BS}, it cannot have any finite-dimensional subquotient, which concludes the proof.
\end{proof}

Combining Proposition~\ref{prop: generic vanishing} and Proposition~\ref{prop: nongeneric vanishing}, we finally obtain the following.

\begin{theorem}\label{thm: vanishing}
    If $\chi^2=1$, the representation $\pi^-$ is zero.
\end{theorem}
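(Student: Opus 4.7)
The plan is to combine the two preceding propositions, which together cover both possible types of irreducible subquotients. Concretely, I would argue by contradiction: if $\pi^-$ were nonzero, then as a finite length representation of $G$ it would admit at least one irreducible subquotient $\rho$. Every irreducible smooth representation of $G$ is either generic or non-generic, so we may split into these two cases.

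In the generic case, Proposition~\ref{prop: generic vanishing} (whose proof did not require the hypothesis $\chi^2=1$ and only used the supercuspidality of $\sigma^-$) directly forbids the existence of $\rho$. In the non-generic case, Proposition~\ref{prop: nongeneric vanishing} applies precisely under the hypothesis $\chi^2=1$ of this theorem, and again rules out the existence of such a $\rho$. Either way we reach a contradiction, and so $\pi^-=0$.

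There is no real obstacle here; the work has already been done in Sections~\ref{subsec: vanishing of the generic part}--\ref{subsec: vanishing of the non-generic part}. The main subtlety one should double-check is simply that those two propositions truly exhaust all possibilities for an irreducible subquotient, i.e.\ that ``generic'' and ``non-generic'' are complementary with respect to the same family of Whittaker functionals used in both statements. Since both statements are phrased in terms of the standard notion of genericity with respect to a (fixed) generic character of the unipotent radical of the Heisenberg parabolic $Q_2$, this dichotomy is immediate, and the proof is therefore a one-line combination of Propositions~\ref{prop: generic vanishing} and \ref{prop: nongeneric vanishing}.
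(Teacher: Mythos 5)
Your proof is correct and follows essentially the same route as the paper, which obtains the theorem precisely by combining Proposition~\ref{prop: generic vanishing} and Proposition~\ref{prop: nongeneric vanishing}, noting that a nonzero finite-length representation would have an irreducible subquotient that is either generic or non-generic. One small correction to your closing remark: ``generic'' here is Whittaker-genericity with respect to the maximal unipotent subgroup of $G$, not the unipotent radical of the Heisenberg parabolic $Q_2$ (the characters $\psi_E$ of $U_2$ are only a tool in the proof of Proposition~\ref{prop: nongeneric vanishing}); since ``non-generic'' is simply the negation of ``generic'', the dichotomy you invoke is automatic and the argument stands.
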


\bibliographystyle{amsalpha}
\bibliography{refs}

\end{document}